\def\bR{\mathbb R}
\def\bE{\mathbb E}
\newtheorem{Theorem}{Theorem}
\newtheorem{lemma}{Lemma}
\def\argmin{\mathop{\rm arg\,min}}
\def\diag{\mathop{\rm diag}\nolimits}
\def\rank{{\rm rank}}
\def\Sum{\overset{n}{\underset{i=1}{\sum}}}
\newcommand{\norm}[1]{\|#1\|}
\newcommand{\mini}{\mathrm{minimize}}
\newcommand{\R}{\mathbb{R}}
\newcommand{\cP}{\mathcal P}
\newcommand{\cS}{\mathcal S}
\newcommand{\grad}{\nabla}
\DeclareMathOperator{\col}{col}
\DeclareMathOperator{\prox}{prox}
\numberwithin{equation}{section}
\theoremstyle{plain}
\begin{document}

\title{High dimensional matrix estimation with unknown variance of the noise}

\author[1]{St\'ephane Ga\"iffas}
\author[2]{Olga Klopp}
\affil[1]{\small Centre de Math\'ematiques Appliqu\'ees, \'Ecole Polytechnique \authorcr UMR 7641, 91128 Palaiseau, France}
\affil[2]{\small CREST and University Paris 10, Modal'X, \authorcr CREST 3, Av. Pierre Larousse 92240 Malakoff, France}

\maketitle

\begin{abstract}
Assume that we observe a small set of entries or linear combinations of entries of an unknown matrix $A_0$ corrupted by noise. We propose a new method for estimating $A_0$ which does not rely on the knowledge or on an estimation of the standard deviation of the noise $\sigma$. Our estimator achieves, up to a logarithmic factor, optimal rates of convergence under the Frobenius risk and, thus, has the same prediction performance as previously proposed estimators which rely on the knowledge of $\sigma$. Some numerical experiments show the benefits of this approach. \\

\noindent
\emph{AMS 2000 subject classification}: 62J99, 62H12, 60B20, 60G05. \emph{Keywords and phrases}: unknown variance of the noise, low rank matrix estimation, matrix completion, matrix regression

\end{abstract}
%



\section{Introduction}\label{introduction}
In this paper we focus on the problem of high-dimensional matrix estimation from noisy observations with \textit{unknown} variance of the noise. Our main interest is the high dimensional setting, that is, when the dimension of the unknown matrix  is much larger than the sample size. Such problems arise in a variety of applications. In order to obtain a consistent procedure in this setting we need some additional constraints. 
In sparse matrix recovery a standard assumption is that the unknown matrix is exactly or near low-rank. Low-rank conditions are appropriate for many applications such as  recommendation systems, system identification, global positioning, remote sensing (for more details see \cite{candes-plan-noise}).

We propose a new method for approximate low-rank matrix recovery which does not rely on the knowledge or on an estimation of the standard deviation of the noise. Two particular settings are analysed in more details: matrix completion and multivariate linear regression.

In the matrix completion problem we observe a small set of entries of an unknown matrix. Moreover, the entries that we observe may be perturbed by some noise. Based on these observations we want to predict or reconstruct exactly the missing entries. One of the well-known examples of matrix completion is the Netflix recommendation system. Suppose we observe  a few movie ratings from a large data matrix in which rows are users and columns are movies. Each user only watches a few movies compared to the total database of movies available on Netflix. The goal is to predict the missing ratings in order to be able to recommend the movies to a person that he/she has not yet seen. 

In the noiseless setting, if the unknown matrix has low rank and is ``incoherent'', then, it can be reconstructed exactly with high probability from a small set of entries. This result was first
proved by Cand{\`e}s and Recht \cite{candes-recht-exact} using nuclear norm minimization.
A tighter analysis of the same convex relaxation was carried out in \cite{candes-tao-power}. For a simpler approach see  \cite{recht-simpler} and \cite{gross-recovery}. An alternative line of work was developed by Keshavan et al in \cite{keshavan-few}.

In a more realistic setting the observed entries are corrupted by noise. This question has been recently addressed by
several authors  (see, e.g., \cite{candes-plan-noise,keshavan-montanari-matrix,rhode-tsybakov-estimation,2009arxiv0912.5100N,2010arxiv1009.2118N,koltchinskii-von,Koltchinskii-Tsybakov,2010arxiv1008.4886G,klopp-rank}). These results require knowledge of the noise variance, 
however, in practice, such an assumption can be difficult to meet
and the estimation of $\sigma$ is non-trivial in large scale problems. Thus, there is a  gap
between the theory and the practice.  

 The multivariate linear regression model is given by 
\begin{equation}\label{regression}
U_i=V_i\,A_0+E_i\qquad i=1,\dots, l,
\end{equation}
where $U_i$ are $1\times m_2$ vectors of response variables, $V_i$ are $1\times m_1$ vectors of predictors, $A_0$ is an unknown $m_1\times m_2$ matrix of regression coefficients and $E_i$ are random $1\times m_2$ vectors of noise with independent entries and mean zero.
 This model arises in many applications such as the analysis of gene array data, medical imaging, astronomical data analysis, psychometrics and many other areas of applications. 

Previously  multivariate linear regression  with unknown noise variance was considered in \cite{bunea-optimal,giraud-low}. These two papers study rank-penalized estimators. Bunea et al \cite{bunea-optimal}, who first introduced such estimators, proposed an unbiased estimator of $\sigma$ which required an assumption on the dimensions of the problem. This assumption excludes an interesting case, the case when the sample size is smaller than the number of covariates. The method proposed  in \cite{giraud-low} can be applied to this last case under a condition on the rank of the unknown matrix $A_0$. Our method, unlike the method of \cite{bunea-optimal}, can be  applied to the case when the sample size is smaller than the number of covariates and our condition is weaker than the conditions obtained in \cite{giraud-low}. For more details see Section \ref{matrix regression}.

Usually, the variance of the noise is involved in the choice of the regularization parameter. Our main idea is to use the Frobenius norm instead of the squared Frobenius norm as a goodness-of-fit criterion, penalized by the
nuclear norm, which is now a well-established proxy for rank penalization in the
compressed sensing literature \cite{candes-tao-power,gross-recovery}. Roughly, the idea is that in the KKT
condition, the gradient of this “square-rooted” criterion is the regression
score, which is pivotal with respect to the noise level, so that the theoretically
optimal smoothing parameter does not depend on the noise level anymore.

This cute idea for dealing with an unknown noise level was first introduced for square-root lasso by Belloni, Chernozhukov and Wang  \cite{chernozhukov-square} in the vector regression model setting. The estimators proposed in the present paper
require quite a different analysis, with proofs
that differ a lot from the vector case. Other methods dealing with the unknown noise level in high-dimensional sparse regression include e.g. the scaled Lasso \cite{tingni} and the penalized Gaussian log-likelihood \cite{stadler-buhlmann-sara}. For a very complete and comprehensive survey see \cite{giraud-survey}. 
It is an interesting open question if these other methods could be adapted in the matrix setting.


\subsection{Layout of the paper}
This paper is organized as follows. In Section \ref{notation} we set notations. In Section \ref{matrix} we consider the matrix completion problem under uniform sampling at random (USR). We propose a new square-root type estimator for which the choice of the regularization parameter $\lambda$ is independent of $\sigma$. The main result, Theorem \ref{cor1}, shows that, in the case of USR matrix completion and under some mild conditions that link the rank and the ``spikiness'' of $A_0$, the prediction risk of our estimator measured in Frobenius norm is comparable to the sharpest bounds obtained until now.
 
 In Section \ref{matrix regression}, we apply our ideas to the problem of matrix regression. We introduce a new square-root type estimator. For this construction, as in the case of matrix completion, we do not need to know or estimate the noise level. The main result for matrix regression, Theorem \ref{thmr2} gives, up to a logarithmic factor, minimax optimal bound on the prediction error $\left \Vert V\left (\hat A -A_0\right )\right \Vert _2^{2}$.
 
 In Section \ref{simulations} we give empirical results that confirms our theoretical findings.

\subsection{Notation}\label{notation}
For any matrices $A,B\in \mathbb{R}^{m_{1}\times m_{2}}$, we define the scalar product
\begin{equation*}
\langle A,B\rangle =\mathrm{tr}(A^{T}B),
\end{equation*}
where  $\mathrm{tr}(A)$ denotes the trace of the matrix $A$.

For $0<q\leq\infty$ the Schatten-q (quasi-)norm of the matrix $A$  is defined by
\begin{equation*}
\Vert A\Vert_q=\left (\underset{j=1}{\overset{\min(m_1,m_2)}{\sum}}\sigma_j(A)^{q}\right )^{1/q}\text{for}\; 0<q<\infty\;\; \text{and}\; \Vert A\Vert_\infty=\sigma_1(A),
\end{equation*}
where  $(\sigma_j(A))_j$ are the singular values of $A$ ordered decreasingly.


We summarize the notations which we use throughout this paper
\begin{itemize}
\item $\partial G$ is the subdifferential of $G$;
\item $S^\bot$ is the orthogonal complement of $S$; 
\item $\mathcal P_S$ is the orthogonal projector on the linear vector subspace $S$ and $\mathcal P_{S}^{\bot}=1-\mathcal P_{S}$;
\item $\left\Vert A\right\Vert_{\sup}=\underset{i,j}{\max}\mid a_{ij}\mid$ where $A=(a_{ij})$.
\item In what follows we will denote by $c$ a numerical constant whose value can vary from one expression to the other and is independent from $n,m_1,m_2$.
\item Set $m=m_1+m_2$, $m_1\wedge m_2=\min (m_1,m_2)$ and $m_1\vee m_2=\max (m_1,m_2)$.
\item The symbol $\lesssim$ means that the inequality holds up to multiplicative numerical constants.
\end{itemize}


\section{Matrix Completion}\label{matrix}
In this section we construct a square-root estimator for the matrix completion problem under uniform sampling at random. Let $A_0\in \mathbb{R}^{m_{1}\times m_{2}}$ be an unknown matrix, and consider the observations $(X_i,Y_i)$ satisfying the trace regression model
\begin{equation}\label{model}
Y_{i}=\mathrm{tr}(X_{i}^{T}A_{0})+\sigma\xi_{i}, \:i=1,\dots,n.
\end{equation}
Here, $Y_i$ are real random variables; $X_{i}$ are random matrices with dimension $m_{1}\times m_{2}$.
The noise variables $\xi_{i}$ are independent, identically distributed and having distribution $\Phi$ such that
\begin{equation}\label{law}
\bE_\Phi(\xi_i)=0,\:\bE_\Phi(\xi_i^{2})=1
\end{equation}
and  $\sigma>0$ is the \textit{unknown} standard deviation of the noise. 

We assume that the design matrices $X_i$ are i.i.d uniformly distributed on the set
\begin{equation}\label{basisUSR}
\mathcal{X} = \left \{e_j(m_1)e_k^{T}(m_2),1\leq j\leq m_1, 1\leq k\leq m_2\right \},
\end{equation}
where $e_l(m)$ are the canonical basis vectors in $\bR^{m}$. Note that when $X_i=e_j(m_1)e_k^{T}(m_2)$ we observed $(j,k)-$th entry of $A_0$ perturbed by some  noise. 
   When number of observations $n$ is much smaller then the total number of coefficients $m_1\,m_2$, we consider the problem of estimating of $A_0$, i.e. the problem of reconstruction of many missing entries of $A_0$ from $n$ observed coefficients. 
   
In \cite{Koltchinskii-Tsybakov}, the  authors introduce the following estimator of $A_0$ 
\begin{equation}\label{estimator'}
\hat{A}=\underset{A\in \mathbb{R}^{m_{1}\times m_{2}}}{\argmin}\big\{\parallel A-\mathbf{X} \parallel _{2}^{2}+\lambda  \Vert A\Vert_1\big\}
\end{equation}
where
\begin{equation}
\mathbf{X}=\dfrac{m_1\,m_2}{n}\Sum Y_iX_i.
\end{equation}
For this estimator, the variance of the noise is involved in the choice of the regularisation parameter $\lambda$. We propose a new square-root type estimator 
\begin{equation}\label{estimator}
\hat{A}_{\lambda,\mu}=\underset{A\in \mathbb{R}^{m_1\times m_2}}{\argmin}\left \{\Vert A-\mathbf X\Vert_2+\lambda \Vert A\Vert_1\right \}.
\end{equation}
The first part of our estimator coincides with the square root of the data-depending term in \eqref{estimator'}. This is similar to the principle used to define the square-root lasso for the usual vector regression model, see \cite{chernozhukov-square}. Despite taking the square-root of the least squares criterion function, the problem  \ref{estimator} retains global convexity and can be formulated as a solution to a conic programming problem. For more details see Section \ref{simulations}.

We will consider the case of sub-Gaussian noise and matrices with uniformly bounded entries.
Let $a$ denote a constant such that
     \begin{equation}\label{normsup}
     \left\Vert A_0\right\Vert_{\sup}\leq a.
     \end{equation}
 We suppose that the noise variables $\xi_i$ are such that 
\begin{equation}\label{subG1}
\bE(\xi_i)=0,\:\bE(\xi_i^{2})=1
\end{equation}
and there exists a constant $K$ such that 
     \begin{equation}\label{subG}
     \bE\left [\exp(t\xi_i)\right ]\leq \exp\left (t^{2}/2K\right )
     \end{equation}
     for all $t>0$.
     Normal $N(0,1)$ random variables are sub-Gaussian with $K=1$ and \eqref{subG} implies that $\xi_i$ has Gaussian type tails:
     \begin{equation*}
     \mathbb{P}\left \{|\xi_i|>t\right \}\leq 2\exp\left \{-t^{2}/2K\right \}.
     \end{equation*}

Condition $\bE \xi_i^{2}=1$ implies that $K\leq 1$. 

Let us introduce the matrix $\mathbf M$
\begin{equation}\label{M}
\mathbf M=\dfrac{1}{m_1\,m_2}\left (\mathbf X-A_0\right ).
\end{equation}
Note that $\mathbf M$ is centred. Its operator and Frobenius norms play an important role in the choice of the regularisation parameter $\lambda$ (and we will show that they are ``small'' enough). We set
 \begin{equation}\label{delta}
\Delta=\dfrac{\Vert \mathbf M \Vert_{\infty}}{\Vert \mathbf M \Vert_2} .
\end{equation}
The next theorem provides a general oracle inequality for the prediction error of our estimator. Its proof is given in the  Appendix \ref{proof_thm_1}.
\begin{Theorem}\label{thm1}
Suppose that  $\dfrac{\rho}{\sqrt{2\rank(A_0)}}\geq \lambda \geq 3\Delta$ for some $\rho<1$, then
\begin{equation*}
\begin{split}
\Vert \hat A -A_0\Vert _2^{2}&\leq \underset{\sqrt{2\rank (A)}\leq \rho/\lambda}{\mathrm{inf}}
\Big\{\left (1-\rho\right )^{-1}\Vert A-A_0 \Vert_2^{2}
+\left (\dfrac{2\lambda\,m_1\,m_2}{1-\rho}\right )^{2}\Vert  \mathbf M\Vert_2^{2} \;\rank A\Big\}
\end{split}
\end{equation*}
where $\Delta$ and $M$ are defined in \eqref{delta} and \eqref{M}.
\end{Theorem}

In order to specify the value of the regularization parameter $\lambda$, we need to estimate $\Delta$  with high probability. Therefore we use the following two lemmas.
 \begin{lemma}\label{l3} 
For  $n>8(m_1\wedge m_2)\log^{2} m$, with probability at least $1-3/m$,  one has
\begin{equation}\label{tsyb}
\Vert \mathbf M \Vert_{\infty}\leq (c_*\sigma+2a)\sqrt{\dfrac{2\log(m)}{(m_1\wedge m_2)n}}
\end{equation}
where $c_*$ is a numerical 
constant which depends only on $K$. \\If $\xi_i$ are $N(0,1)$, then we can take $c_*=6.5$.
\end{lemma}
\begin{proof}
The bound \eqref{tsyb} is stated in Lemmas 2 and 3 in \cite{Koltchinskii-Tsybakov}. A closer inspection of the proof of Proposition 2 in \cite{koltchinskii-von} gives an estimation on $c_*$ in the case of Gaussian noise. For more details see the Appendix \ref{proof_lemma3}.
\end{proof} 
The following Lemma, proven in the Appendix \ref{proof_Lemma}, provides bounds on $\left\Vert \mathbf M\right\Vert_2$.
\begin{lemma}\label{Lemma}
 Suppose that $4\,n\leq m_1m_2$. Then, for $\mathbf M$ defined in \eqref{M}, with probability at least $1-2/m_1m_2-c_1\exp\{-c_2n\}$, one has 
  \begin{itemize}
  \item[(i)]  
  \begin{equation*}
     2\left  (\dfrac{\left\Vert A_0\right\Vert^2_2}{nm_1m_2} +\dfrac{\sigma^2}{\,n}\right )\geq \left\Vert \mathbf M\right\Vert^{2}_2\geq \dfrac{\sigma^2}{2\,n};
    \end{equation*}
    \item[(ii)]
    \begin{equation*}
    \left\Vert \dfrac{1}{n}\Sum Y_i X_i\right\Vert^{2}_2\geq \dfrac{\left\Vert A_0 \right\Vert_2^{2}}{n\,m_1m_2}\geq \dfrac{4\left\Vert A_0 \right\Vert_2^{2}}{(m_1m_2)^{2}}; 
    \end{equation*}
    \item[(iii)]
        \begin{equation*}
        \left\Vert \mathbf M\right\Vert_2\geq \dfrac{1}{2}\left\Vert \dfrac{1}{n}\Sum Y_i X_i\right\Vert_2
        \end{equation*}
  \end{itemize}
where $(c_1,c_2)$ are numerical constants which depends only on $K,\,a$ and $\sigma$.
\end{lemma}

Recall that the condition on $\lambda$ in Theorem \ref{thm1} is that $\lambda\geq 3\Delta$. Using Lemma \ref{l3} and the lower bounds on $\left\Vert \mathbf{M} \right\Vert_2$ given by Lemma \ref{Lemma}, we can choose 

\begin{equation}\label{lambda}
\lambda=2c_*\sqrt{\dfrac{\log m}{m_1\wedge m_2}}+4a\sqrt{\dfrac{2\,n\log m}{m_1\wedge m_2}}\dfrac{1}{ \left\Vert\Sum Y_iX_i \right\Vert_2}.
\end{equation}
Note that in \eqref{lambda} $\lambda$  is data driving and is independent of $\sigma$. With this choice of $\lambda$, the assumption of Theorem \ref{thm1}, $\dfrac{\rho}{\sqrt{\rank(A_0)}}\geq \lambda$, takes the form
\begin{equation} \label{21}
 \dfrac{\rho}{\sqrt{\rank(A_0)}}\geq 2c_*\sqrt{\dfrac{\log m}{m_1\wedge m_2}}+4a\sqrt{\dfrac{2\,n\log m}{m_1\wedge m_2}}\dfrac{1}{ \left\Vert\Sum Y_iX_i \right\Vert_2}.
\end{equation}
Using (ii) of Lemma \ref{Lemma} we get that \eqref{21} is satisfied with a high probability if
\begin{equation}\label{16}
 \dfrac{\rho}{\sqrt{\rank(A_0)}}\geq 2\,c_*\sqrt{\dfrac{\log m}{m_1\wedge m_2}}+\dfrac{4\,a\sqrt{m_1m_2}}{ \left\Vert A_0\right\Vert_2}\sqrt{\dfrac{2\log m}{m_1\wedge m_2}}.
 \end{equation}
Note that as $m_1$ and $m_2$ are large, the first term in the rhs of \eqref{16} is small. Thus \eqref{16} is essentially equivalent to \begin{equation}\label{20}
 \rho\geq 4\sqrt{\dfrac{2\log m}{(m_1\wedge m_2)}}\sqrt{\rank(A_0)}\,\alpha_{sp}
 \end{equation}
 where $\alpha_{sp}=\dfrac{ \sqrt{m_1m_2}\left\Vert A_0\right\Vert_{\sup}}{\left\Vert A_0\right\Vert_2}$ is the \textit{spikiness ratio} of $A_0$. The notion of ``spikiness'' was introduced by Negahban and Wainwright in \cite{2010arxiv1009.2118N}. We have  that $1\leq \alpha_{sp}\leq \sqrt{m_1m_2}$ and it is large for ``spiky'' matrices, i.e. matrices where some ``large'' coefficients emerge as spikes among  very ``small'' coefficients.   For instance, $\alpha_{sp}=1$ if all the entries of $A_0$ are equal to some constant and $\alpha_{sp}=\sqrt{m_1m_2}$ if $A_0$ has only one non-zero entry.  
 
 Condition \eqref{20} is a kind of trade-off between ``spikiness'' and rank. If $\alpha_{sp}$ is bounded by a constant, then, up to a logarithmic factor, $\rank(A_0)$ can be of the order $m_1\wedge m_2$, which is its maximal possible value. If our matrix is ``spiky'', then we need low rank. To give some intuition let us consider the case of square matrices. Typically, matrices with both high spikiness ratio and high rank look almost diagonal. Thus, under uniform sampling and if $n\ll m_1m_2$,  with high probability we do not observe diagonal (i.e. non-zero) elements. 
%

  \begin{Theorem}\label{cor1}
  Let the set of conditions \eqref{subG1} - \eqref{normsup} be satisfied and $\lambda$ be as in \eqref{lambda}. Assume that $8(m_1\wedge m_2)\log^{2} m < n\leq \dfrac{m_1m_2}{4}$  and that \eqref{16} holds for some $\rho<1$.  Then, with probability at least $1-4/m-c_1\exp\{-c_2n\}$
  \begin{equation}\label{thm7}
  \dfrac{1}{m_1m_2}\Vert \hat A -A_0\Vert _2^{2}\leq C_*\dfrac{(m_1\vee m_2)}{n}\rank(A_0)\log m.
  \end{equation}
  Here $C_*=\dfrac{16\left (2c_*\sigma^2+(18+2c_*)a^{2}\right )}{(1-\rho)^{2}}$, $c_*$ is an absolute constant that depends only on $K$ and $(c_1,c_2)$ are numerical constants that depend only on $K,\,a$ and $\sigma$.
  \end{Theorem}
  \begin{proof}
  This is a consequence of Theorem \ref{thm1} for $A=A_0$. From \eqref{lambda} we get 
  \begin{equation}\label{18}
  \begin{split}
  \Vert \hat A -A_0\Vert _2^{2}&\leq \dfrac{8(m_1m_2)^{2}}{(1-\rho)^{2}}\left (c_*\sqrt{\dfrac{4\log m}{m_1\wedge m_2}}+2a\sqrt{\dfrac{2\,n\log m}{m_1\wedge m_2}}\dfrac{1}{ \left\Vert\Sum Y_iX_i \right\Vert_2}\right )^{2}\\&\hskip 1 cm\times \left\Vert \mathbf M \right\Vert_2^{2}\rank(A_0).
  \end{split}
  \end{equation}
  Using triangle inequality and (ii) of Lemma \ref{Lemma} we compute 
  \begin{equation*}\label{19}
  \begin{split}
    \left\Vert \mathbf M \right\Vert_2&\leq \left\Vert\dfrac{1}{n}\Sum Y_iX_i \right\Vert_2+\dfrac{1}{m_1m_2} \left\Vert A_0\right\Vert_2\\&\leq \dfrac{3}{2}\left \Vert\dfrac{1}{n}\Sum Y_iX_i \right\Vert_2.
  \end{split}
  \end{equation*}
 Using (i) of Lemma \ref{Lemma} and \eqref{19}, from \eqref{18}, we get 
  \begin{equation*}
    \begin{split}
    \Vert \hat A -A_0\Vert _2^{2}\leq &\dfrac{16\log (m)(m_1m_2)^{2}}{(1-\rho)^{2}(m_1\wedge m_2)}\left (2\,c_*\left  (\dfrac{\left\Vert A_0\right\Vert^2_2}{nm_1m_2} +\dfrac{\sigma^2}{\,n}\right )+\dfrac{18a^{2}}{n}\right ) \rank(A_0).
    \end{split}
    \end{equation*}
    Then, we use $\left\Vert A_0\right\Vert^2_2\leq a^{2}m_1m_2$ to obtain
    \begin{equation*}
        \begin{split}
        \dfrac{\Vert \hat A -A_0\Vert _2^{2}}{m_1m_2}\leq &\dfrac{16\log (m)(m_1\vee m_2)}{(1-\rho)^{2}n}\left (2c_*\sigma^2+(18+2c_*)a^{2}\right ) \rank(A_0).
        \end{split}
        \end{equation*}
        This completes the proof of Theorem \ref{cor1}.
  \end{proof}
  Theorem \ref{cor1} guarantees that the normalized Frobenius error $\dfrac{\parallel \hat{A}-A_{0}\parallel _{2}}{\sqrt{m_1m_2}}$ of the estimator $\hat A$ is small whenever $n>C(m_1\vee m_2)\log(m)\rank (A_0)$ with a constant $C$ large enough. This quantifies the sample size, n, necessary for
  successful matrix completion from noisy data with unknown variance of the noise. Remarkably, this sampling size is the same as in the case of known variance of the noise. In Theorem \ref{cor1} we have an additional restriction $4n\leq m_1m_2$.  In matrix completion setting the number of observed entries $n$ is always smaller then the total number of entries $m_1m_2$ and this condition can be replaced by $n\leq\alpha m_1m_2$ for some $\alpha <1$.
  
Theorem \ref{cor1} leads to the same rate of convergence as previous results on matrix completion which treat $\sigma$ as known.   In order to compare our bounds to those obtained in past works on
  noisy matrix completion, we will start with describing the result of Keshavan et al \cite{keshavan-montanari-matrix}.
  Under a sampling scheme different from ours (sampling without replacement) and sub-Gaussian errors, the estimator proposed in \cite{keshavan-montanari-matrix} satisfies, with  high probability, the following bound
 \begin{equation}\label{keshavan}
   \dfrac{1}{m_1m_2}\Vert \hat A -A_0\Vert _2^{2}\lesssim k^{4}\sqrt{\alpha}\dfrac{(m_1\vee m_2)}{n}\rank(A_0)\log n.
   \end{equation}
   Here $k=\sigma_{\max}(A_0)/\sigma_{\min}(A_0)$ is the condition number and $\alpha=(m_1\vee m_2)/(m_1\wedge m_2)$ is the aspect ratio. Comparing \eqref{keshavan} and \eqref{thm7}, we see that our bound is better: it does not involve the multiplicative coefficient  $k^{4}\sqrt{\alpha}$ which can be big.
   
   Negahban et al in \cite{2010arxiv1009.2118N} propose an estimator which, in the case of USR matrix completion and sub-exponential noise, satisfies 
   \begin{equation} \label{wainwright}
      \dfrac{1}{m_1m_2}\Vert \hat A -A_0\Vert _2^{2}\lesssim \alpha_{sp}\dfrac{m}{n}\rank(A_0)\log m.
      \end{equation}
      Here $\alpha_{sp}$ is the spikiness ratio of $A_0$. For $\alpha_{sp}$ bounded by a constant, \eqref{wainwright} gives the same bound as Theorem \ref{cor1}. The
      construction of $\hat A$ in \cite{2010arxiv1009.2118N} requires a priori information on the spikiness ratio of $A_0$ and on $\sigma$. This is not the case for our estimator.
      
      The estimator proposed by Koltchinskii et al in \cite{Koltchinskii-Tsybakov} achieves the same bound as ours. In addition  to prior information on $\left\Vert A_0\right\Vert_{\sup}$, their method also requires prior information on $\sigma$.  In the case of Gaussian errors, this rate of convergence is optimal up to a logarithmic factor (cf. Theorem 6 of \cite{Koltchinskii-Tsybakov}) for the class of matrices $\mathcal{A}(r,a)$ defined as follows: for given $r$ and $a$, $A_0\in \mathcal{A}(r,a)$ if and only if the rank of $A_0$ is bounded by $r$ and all the entries of $A_0$ are  bounded in absolute value by $a$.
      
     One important difference with previous works on matrix completion is that Theorem \ref{cor1} requires the additional growth restriction on $\lambda$, that is  the condition $\dfrac{\rho}{\sqrt{2\rank(A_0)}}\geq \lambda$.  The consequence of this growth restriction is that our method can not be applied to matrices which have both large spikiness ratio and large rank. 
     Note that  the square-root lasso estimator also requires an additional growth restriction on $\lambda$ (see Theorem 1 in \cite{chernozhukov-square}). We may think that these restrictions is the price of not knowing $\sigma$ in our framework.
  \section{Matrix Regression}\label{matrix regression}
     In this section we apply our method to matrix regression.  Recall that the matrix regression model is given by 
     \begin{equation}\label{regression_bis}
     U_i=V_i\,A_0+E_i\qquad i=1,\dots, n,
     \end{equation}
     where $U_i$ are $1\times m_2$ vectors of response variables; $V_i$ are $1\times m_1$ vectors of predictors; $A_0$ is an unknown $m_1\times m_2$ matrix of regression coefficients; $E_i$ are random $1\times m_2$ noise vectors with independent entries $E_{ij}$. We suppose that  $E_{ij}$ has mean zero and \textit{unknown} standard deviation $\sigma$.
        Set 
     $V=\left  (V_1^{T},\dots, V_n^{T}\right  )^{T}$, $U=\left  (U_1^{T},\dots, U_n^{T}\right  )^{T}$ and $E=\left  (E_1^{T},\dots, E_n^{T}\right  )^{T}$. 
     
     
     We propose new estimator of $A_0$ using again the idea of the square-root estimators:
     \begin{equation*}\label{estimator_regression}
     \hat{A}=\underset{A\in \mathbb{R}^{m_{1}\times m_{2}}}{\argmin}\left \{\Vert U-V\,A\Vert_2+\lambda \Vert VA\Vert_1\right \},
     \end{equation*}
     where $\lambda>0$ is a regularization parameter.  This estimator can be formulated as a solution to a conic programming problem. For more details see Section \ref{simulations}.
     
     Recall that $\mathcal P_{V}$ denote the orthogonal projector on the linear span of the columns of matrix $V$.  We set $$\Delta'=\dfrac{ \left\Vert\mathcal{P}_V\left (E\right ) \right\Vert_\infty}{ \left\Vert E\right\Vert_2}.$$
     Minor modifications in the proof of Theorem \ref{thm1} yield the following result.
     \begin{Theorem}\label{thmr1} 
     Suppose that  $\dfrac{\rho}{\sqrt{2\rank(VA_0)}}\geq \lambda \geq 3\Delta'$ for some $\rho<1$, then
     \begin{equation*}
     \begin{split}
     \left \Vert V\left (\hat A -A_0\right )\right \Vert _2^{2}&\leq \underset{\sqrt{2\rank (VA)}\leq \rho/\lambda}{\mathrm{inf}}
     \left \{\dfrac{\left \Vert V\left (A-A_0\right ) \right \Vert_2^{2}}{1-\rho}
     +\left (\dfrac{2\lambda}{1-\rho}\right )^{2}\left \Vert   E\right \Vert_2^{2}\, \rank (VA)\right \}
     \end{split}
     \end{equation*}
     \end{Theorem}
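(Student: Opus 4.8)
The plan is to transcribe the proof of Theorem \ref{thm1} under the substitution that replaces the pair $(\hat A,\mathbf X)$ and the oracle $A_0$ by their images $(V\hat A,U)$ and $VA_0$, so that the ``noise'' $\mathbf X-A_0=\mu^{2}\mathbf M$ is played by $U-VA_0=E$ and the rank $\rank(A)$ is played by $\rank(VA)$. The rank control is already supplied by Lemma \ref{lr1} in place of Lemma \ref{l1}. The one genuinely new ingredient is the counterpart of the auxiliary Lemma \ref{l2}, namely the lower bound
\[
\Vert U-V\hat A\Vert_2\;\geq\;\frac{3-\sqrt{1+\rho^{2}}}{3+\sqrt{1+\rho^{2}}}\,\Vert E\Vert_2,
\]
valid under $\dfrac{\rho}{\sqrt{\rank(VA_0)}}\geq\lambda\geq 3\Delta'$. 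I would obtain it by adapting the appendix argument for Lemma \ref{l2}, with $\hat A,\mathbf X,A_0$ replaced by $V\hat A,U,VA_0$ and with the trace-duality estimates applied to $\mathcal P_V E$ rather than $E$; this is legitimate because every matrix of the form $V(\hat A-A)$ lies in the column span of $V$, so that $\langle E,V(\hat A-A)\rangle=\langle\mathcal P_V E,V(\hat A-A)\rangle$. If $V\hat A=U$, this lower bound forces $\Vert E\Vert_2=0$, hence $U=VA_0$ and $V(\hat A-A_0)=0$, so the asserted inequality is trivial; thus I may assume $V\hat A\neq U$ throughout.

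First I would write the first-order optimality condition $0\in\partial G(\hat A)$ for the objective in \eqref{estimator_regression}. Using the chain rule $\partial\Vert VA\Vert_1=V^{T}\partial\Vert\cdot\Vert_1(VA)$ already recorded in the proof of Lemma \ref{lr1}, there is $\hat W\in\partial\Vert\cdot\Vert_1(V\hat A)$ with
\[
\frac{V^{T}(V\hat A-U)}{\Vert U-V\hat A\Vert_2}+\lambda V^{T}\hat W=0.
\]
Pairing with $\hat A-A$ and moving the adjoint of $V$ across the inner product ($\langle V^{T}M,N\rangle=\langle M,VN\rangle$) produces the exact analog of \eqref{4}, now phrased entirely in terms of the images $V\hat A,VA,VA_0,U$. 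From here the chain \eqref{5}--\eqref{12} carries over mechanically: monotonicity of the subdifferential of $\Vert\cdot\Vert_1$ gives $\langle\hat W-W,V(\hat A-A)\rangle\geq 0$ for every $W\in\partial\Vert\cdot\Vert_1(VA)$; the identity $2\langle V(\hat A-A_0),V(\hat A-A)\rangle=\Vert V(\hat A-A_0)\Vert_2^{2}+\Vert V(\hat A-A)\Vert_2^{2}-\Vert V(A-A_0)\Vert_2^{2}$ replaces the corresponding decomposition; and trace duality applied to $\langle\mathcal P_V E,V(\hat A-A)\rangle$ yields $\Vert\mathcal P_V E\Vert_\infty$ in place of $\Vert\mathbf X-A_0\Vert_\infty$.

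The two structural facts about the projection $\mathbf P_{VA}(B)=B-P_{S_1^{\bot}(VA)}B\,P_{S_2^{\bot}(VA)}$ that I would reuse are $\rank(\mathbf P_{VA}(B))\leq 2\rank(VA)$ and $\Vert\mathbf P_{VA}(V(\hat A-A))\Vert_1\leq\sqrt{2\rank(VA)}\,\Vert V(\hat A-A)\Vert_2$; both hold verbatim since they depend only on the singular subspaces of $VA$. Combining the lower bound on $\Vert U-V\hat A\Vert_2$ with $\lambda\geq 3\Delta'$ (which reads $\Vert\mathcal P_V E\Vert_\infty\leq\tfrac{\lambda}{3}\Vert E\Vert_2$) gives, exactly as in \eqref{10}, the coefficient $6\,\dfrac{3-\sqrt{1+\rho^{2}}}{3+\sqrt{1+\rho^{2}}}\geq 2$ needed to absorb the off-support terms $\Vert P_{S_1^{\bot}(VA)}\,V\hat A\,P_{S_2^{\bot}(VA)}\Vert_1$ on both sides. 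Restricting the infimum to those $A$ with $\lambda\sqrt{2\rank(VA)}\leq\rho$ and applying $2ab\leq a^{2}+b^{2}$ twice then produces $(1-\rho)\Vert V(\hat A-A_0)\Vert_2^{2}\leq\Vert V(A-A_0)\Vert_2^{2}+\tfrac{4}{1-\rho}\lambda^{2}\Vert E\Vert_2^{2}\rank(VA)$, which is the claim after dividing by $1-\rho$.

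The main obstacle is the counterpart of Lemma \ref{l2}: because the residual $\Vert U-V\hat A\Vert_2$ contains the component $\mathcal P_V^{\bot}E$ that the estimator cannot move, I must confirm that the appendix argument still delivers the lower bound against the full $\Vert E\Vert_2$ and not merely against $\Vert\mathcal P_V E\Vert_2$. Since $\Vert U-V\hat A\Vert_2^{2}=\Vert\mathcal P_V^{\bot}E\Vert_2^{2}+\Vert\mathcal P_V U-V\hat A\Vert_2^{2}$ and the constant $\bigl(\tfrac{3-\sqrt{1+\rho^{2}}}{3+\sqrt{1+\rho^{2}}}\bigr)^{2}<1$, the uncontrolled term $\mathcal P_V^{\bot}E$ can only help, so that a lower bound of the form $\Vert\mathcal P_V U-V\hat A\Vert_2\geq c(\rho)\Vert\mathcal P_V E\Vert_2$ on the column span of $V$ would upgrade to the full inequality; checking that the reduction to the column span survives the square-root objective (which mixes in the additive constant $\Vert\mathcal P_V^{\bot}E\Vert_2^{2}$) is the step that requires the most care.
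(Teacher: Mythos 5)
Your proposal is correct and follows essentially the same route as the paper: transcribe the proof of Theorem \ref{thm1} under the substitution $(\hat A,\mathbf X,A_0)\mapsto(V\hat A,U,VA_0)$, use Lemma \ref{lr1} for the rank control, replace $\Vert\mathbf X-A_0\Vert_\infty$ by $\Vert\mathcal P_V E\Vert_\infty$ via $\langle E,V(\hat A-A)\rangle=\langle\mathcal P_V E,V(\hat A-A)\rangle$, and prove the analogue of Lemma \ref{l2} (the paper's Lemma \ref{lr2}) to absorb the off-support terms. The obstacle you flag at the end is in fact vacuous: the subgradient inequality for $A\mapsto\Vert VA-U\Vert_2$ at $A_0$ already compares $\Vert V\hat A-U\Vert_2$ against the full norm $\Vert VA_0-U\Vert_2=\Vert E\Vert_2$, with the correction term controlled by $\Delta'=\Vert\mathcal P_V E\Vert_\infty/\Vert E\Vert_2\leq\lambda/3$, so no reduction to the column span of $V$ is needed (though your Pythagorean workaround would also be valid).
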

     \begin{proof} The proof follows the lines of the proof of Theorem \ref{thm1} and it is given in the Appendix \ref{proof_thmr1}.
     \end{proof}
     
     To get the oracle inequality in a closed form it remains to specify the value of regularization parameter $\lambda$ such that $\lambda\geq 3\Delta'$. This requires some assumptions on the distribution of the noise $(E_{ij})_{i,j}$. We will consider the case of Gaussian errors. Suppose that $E_{ij}=\sigma\xi_{ij}$ where $\xi_{ij} $ are  normal $N(0,1)$ random variables. In order to estimate  $\left\Vert \mathcal{P}_VE\right\Vert_\infty$ we will use the following result proven in \cite{bunea-optimal}.
      \begin  {lemma}[\cite {bunea-optimal}, Lemma 3]\label{lemma_bunea}
              Let $r=\rank(V)$ and assume that $E_{ij}$ are independent $N(0,\sigma^{2})$ random variables. Then
                \begin{equation*}
               \bE (\left \| \mathcal{P}_{V}E\right\|_\infty)\leq \sigma(\sqrt{m_2}+\sqrt{r})
               \end{equation*}
               and\begin{equation*}
               \mathbb P\left \{\left \| \mathcal{P}_{V}E\right\|_\infty\geq \bE (\left \| \mathcal{P}_{V}E\right\|_\infty)+\sigma t\right \}\leq \exp\left \{-t^{2}/2\right \}.
               \end{equation*}
     \end{lemma}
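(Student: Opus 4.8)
The plan is to reduce the statement to a claim about a standard Gaussian matrix and then invoke two classical facts: a bound on the expected spectral norm of a Gaussian matrix, and the Gaussian concentration inequality for Lipschitz functions. Throughout, recall that $\mathcal{P}_V$ is the orthogonal projector onto the $r$-dimensional column span of $V$ inside $\bR^{l}$, and that $E$ is the $l\times m_2$ matrix with i.i.d.\ $N(0,\sigma^2)$ entries.

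First I would diagonalize the projector: write $\mathcal{P}_V=QQ^{T}$, where $Q$ is an $l\times r$ matrix with orthonormal columns. Since left multiplication by $Q$ is an isometric embedding, the nonzero singular values of $\mathcal{P}_V E=Q\left(Q^{T}E\right)$ coincide with those of $Q^{T}E$, so that $\Vert \mathcal{P}_V E\Vert_\infty=\Vert Q^{T}E\Vert_\infty$. The key observation is that $G:=\sigma^{-1}Q^{T}E$ is an $r\times m_2$ matrix with i.i.d.\ $N(0,1)$ entries: each entry $(Q^{T}E)_{jk}=\langle q_j,E^{(k)}\rangle$ is the inner product of the $k$-th column $E^{(k)}$ of $E$ with the unit vector $q_j$, so for fixed $k$ the orthonormality of the $q_j$ makes these jointly independent $N(0,\sigma^{2})$, and independence across $k$ follows from independence of the columns of $E$. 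Thus $\Vert \mathcal{P}_V E\Vert_\infty=\sigma\,\Vert G\Vert_\infty$ with $G$ a standard $r\times m_2$ Gaussian matrix.

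For the expectation bound I would apply the standard estimate for the largest singular value of a Gaussian matrix, a consequence of Gordon's Gaussian comparison inequality (equivalently the Davidson--Szarek bound), namely $\bE\left(\Vert G\Vert_\infty\right)\leq \sqrt{r}+\sqrt{m_2}$. Multiplying through by $\sigma$ yields the first claim $\bE\left(\Vert \mathcal{P}_V E\Vert_\infty\right)\leq \sigma\left(\sqrt{m_2}+\sqrt{r}\right)$. For the deviation bound I would show that the map $E\mapsto\Vert \mathcal{P}_V E\Vert_\infty$ is $1$-Lipschitz with respect to the Frobenius norm: by the reverse triangle inequality for the spectral norm, together with the fact that the spectral norm is dominated by the Frobenius norm and that a projector is a Frobenius-norm contraction, one has $\bigl|\Vert \mathcal{P}_V E\Vert_\infty-\Vert \mathcal{P}_V E'\Vert_\infty\bigr|\leq \Vert \mathcal{P}_V(E-E')\Vert_\infty\leq \Vert \mathcal{P}_V(E-E')\Vert_2\leq \Vert E-E'\Vert_2$. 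Viewing the entries of $E$ as a Gaussian vector in $\bR^{l m_2}$ with covariance $\sigma^{2}I$, the Gaussian concentration inequality for $1$-Lipschitz functions then gives $\mathbb{P}\left\{\Vert \mathcal{P}_V E\Vert_\infty\geq \bE\left(\Vert \mathcal{P}_V E\Vert_\infty\right)+\sigma t\right\}\leq \exp\left\{-t^{2}/2\right\}$, which is the second claim.

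The reduction in the first step and the Lipschitz concentration in the third are routine. The genuine analytic input is the expected-spectral-norm bound for a Gaussian matrix, which I expect to be the main obstacle if one insists on a fully self-contained argument, since it rests on the Gaussian comparison (Slepian/Gordon) machinery rather than on an elementary computation. In the present context this is precisely why the statement is borrowed verbatim from \cite{bunea-optimal}.
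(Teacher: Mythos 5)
The paper itself offers no proof of this lemma: it is imported verbatim from \cite{bunea-optimal}, so there is no in-paper argument to compare against. Your reconstruction is correct and is essentially the standard proof of that result: the identity $\Vert \mathcal{P}_V E\Vert_\infty=\sigma\Vert G\Vert_\infty$ with $G$ an $r\times m_2$ standard Gaussian matrix (via an orthonormal basis $Q$ of the column span of $V$), the Davidson--Szarek/Gordon bound $\bE\Vert G\Vert_\infty\leq\sqrt{r}+\sqrt{m_2}$ for the expectation, and Gaussian concentration for the $1$-Lipschitz map $E\mapsto\Vert\mathcal{P}_V E\Vert_\infty$ for the tail. All three steps are sound, including the correct handling of the variance $\sigma^2$ in the Lipschitz/concentration step.
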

%
     We use Bernstein's inequality to get a bound on  $ \left\Vert  E \right\Vert_2$. Let $\alpha<1$. With  probability at least 
     $1-2\exp\left \{-c\,\alpha^{2}\,n\,m_2\right \}$, one has 
     \begin{equation}\label{r32}
     (1+\alpha)\sigma\sqrt{n\,m_2}\geq\left\Vert  E \right\Vert_2\geq (1-\alpha)\sigma\sqrt{n\,m_2}.
     \end{equation}
     Let $\beta>0$ and take $t=\beta \left (\sqrt{m_2}+\sqrt{r}\right )$ in Lemma \ref{lemma_bunea}. Then, using \eqref{r32},
     we can take \begin{equation}\label{r21}
     \lambda=\dfrac{(1+\beta)\left (\sqrt{m_2}+\sqrt{r}\right )}{(1-\alpha)\sqrt{n\,m_2}}.
     \end{equation}
     Put $\gamma=\dfrac{1+\beta}{1-\alpha}>1$. Thus, condition $\dfrac{\rho}{\sqrt{2\,\rank(VA_0)}}\geq \lambda$  gives
     \begin{equation}\label{r22}
    \rank(VA_0)\leq \dfrac{\rho^{2}n\,m_2}{2\gamma^{2}\left (\sqrt{m_2}+\sqrt{r}\right )^{2}}
     \end{equation}
     and we get the following result.
     \begin{Theorem}\label{thmr2}
     Assume that $\xi_{ij}$ are independent $N(0,1)$. Pick $\lambda$ as in \eqref{r21}. Assume \eqref{r22} is satisfied for some $\rho<1$, $\alpha<1$ and $\beta>0$. Then, with probability at least $1-2\exp\left \{-c(m_2+r)\right  \}$, we have that
     \begin{equation*}
     \left \Vert V\left (\hat A -A_0\right )\right \Vert _2^{2}\lesssim 
          \sigma^{2}(m_2+r)\, \rank(V A_0).
     \end{equation*}
     \end{Theorem}
     \begin{proof}
     This is a consequence of Theorem \ref{thmr1}. 
     \end{proof}
     Let us now compare condition \eqref{r22} with the conditions obtained in 
      \cite{bunea-optimal,giraud-low}. In  \cite{bunea-optimal} the  authors introduce a new rank-penalised estimator and consider both cases when the variance of the noise is known or not. In the case of known variance of the noise, in \cite{bunea-optimal}, minimax optimal bounds on the mean squared errors are established (it does not need growth restriction on $\lambda$ and, thus, applies to all $\rank(V A_0 )$).
      In the case when the variance of the noise is unknown,  un unbiased estimator of $\sigma$ is proposed. This estimator requires an assumption on the dimensions of the problem. In particular it requires $m_2(n-r)$ to be large, which holds whenever $n\gg r$ or $n-r\geq 1$ and $m_2$ is large. This condition excludes an interesting case $n=r\ll m_2$. On the other hand \eqref{r22} is satisfied for $n=r\ll m_2$ if 
      \begin{equation*} 
         \rank(A_0)\lesssim n
          \end{equation*}
          where we used $\rank(VA_0)\leq r\wedge \rank(A_0)$.
          
          The method of \cite{giraud-low} requires the following condition to be satisfied
          \begin{equation} \label{giraud}
             \rank(A_0)\leq \dfrac{C_1 (n\,m_2-1)}{C_2\left (\sqrt{m_2}+\sqrt{r}\right )^{2}}
              \end{equation}
with some constants $C_1<1$ and $C_2>1$. This condition is quite similar to condition \eqref{r22}. Note  that, as $\rank(VA_0)\leq  \rank(A_0)$, condition \eqref{r22} is weaker than \eqref{giraud}. To the opposite of \cite{giraud-low}, our results are valid for all $A_0$ provided that
\begin{equation*} 
    r\leq \dfrac{\rho^{2}n\,m_2}{2\gamma^{2}\left (\sqrt{m_2}+\sqrt{r}\right )^{2}}.
     \end{equation*}
For large $m_2 \gg n$, this condition roughly mean that $n>cr$ for some constant$c$.

\section{Simulations}
\label{simulations}

In this section, we give empirical results that confirms our
theoretical findings. We illustrate the fact that using the Frobenius
norm instead of the square Frobenius norm as a goodness-of-fit
criterion makes the optimal smoothing parameter $\lambda$ independent
of the noise level, allowing for a better stability of the procedure
with respect to the noise level, as compared to other state-of-the-art
procedures. We focus on the matrix regression problem only, since
our conclusions are the same for matrix completion. We compare in particular the following procedures:
\begin{equation}
  \label{eq:procedure1}
  \text{argmin}_A \Big\{ \frac 12 \norm{U - V A}_2^2 + \lambda \norm{A}_1 \Big\},
\end{equation}
which is based on the classical least-squares penalized by the trace norm,
\begin{equation}
  \label{eq:procedure2}
  \text{argmin}_A \Big\{ \norm{U - V A}_2 + \lambda \norm{A}_1 \}
\end{equation}
which uses trace norm penalization with square-root least squares, and
\begin{equation}
  \label{eq:procedure3}
 \text{argmin}_A  \Big\{ \norm{U - V A}_2 + \lambda \norm{V A}_1 \Big\}
\end{equation}
which is the procedure introduced in this paper. We illustrate in particular the fact that~\eqref{eq:procedure2} and~\eqref{eq:procedure3}, which are based on a goodness-of-fit using the Frobenius norm instead of the squared Frobenius norm, provide a choice of $\lambda$ which is independent of the noise level $\sigma$.

\subsection{Optimization algorithms}
\label{sec:optimization}

In this section, we describe the convex optimization algorithms used for solving 
problems~\eqref{eq:procedure1}, \eqref{eq:procedure2} and~\eqref{eq:procedure3}.
For this we need to introduce the proximal operator \cite{bauschke2011convex} $\prox_g$ of a convex, proper, low-semicontinuous function $g$, given by
\begin{equation*}
  \prox_g(W) = \argmin_{Y} \Big\{ \frac {1}{2} \norm{W -
    Y}_2^2 + g(Y) \Big\}.
\end{equation*}
In the algorithms described below, we need to compute such proximal operator for specific functions. The proximal operator of the trace norm is given by spectral soft-thresholding, namely
\begin{equation*}
  \prox_{t g}(W) = \cS_{t}(W) \quad \text{ for } \quad g(W) = \| W \|_1
\end{equation*}
for any $t > 0$, where
\begin{equation*}
  \cS_t(W) = U_{W} \diag[( \sigma_1(W) - t)_+ \; \cdots \; (
  \sigma_{\rank(W)}(W) - t)_+] V_{W}^\top,
\end{equation*}
with $U_{W} \diag[ \sigma_1(W) \; \cdots \; \sigma_{\rank(W)}(W)]
V_{W}^\top$ the singular value decomposition of $W$, with the
columns of $U_{W}$ and $V_{W}$ being the left and right singular
vectors of $W$, and $\sigma_1(W) \geq \cdots \geq
\sigma_{\rank(W)}(W)$ its singular values. 

Problem~\eqref{eq:procedure1} is solved using accelerated proximal gradient, also known as Fista~\cite{beck2009fast}, since the loss is gradient-Lipschitz. Fista allows to minimize an objective of the form
\begin{equation*}
	F(A) = f(A) + g(A),
\end{equation*}
where $f$ is smooth (gradient-Lipshitz) with Lipschitz constant $L = \|V \|_\infty$ (the operator norm of $V$) and $g$ is prox-capable. In our setting we consider $f(A) = \frac 12 \norm{U - V A}_2^2$ and $g(A) = \lambda \norm{A}_1$, so that $\grad f(A) = V^\top (V A - U)$ and $\prox_{t g}(A) = \cS_t(A)$. The Fista algorithm is described in Algorithm~\ref{alg:fista} below. In our experiments we used backtracking linesearch, instead of fixing the step-size constant and equal to $1 / L$.
\begin{algorithm}
  \caption{Fista}
  \begin{algorithmic}[1]
    \REQUIRE Starting points $B^1 = A^0$, Lipschitz constant $L > 0$ for $\grad f$, $t_1 = 1$%
    \FOR {$k=0,1,2,\ldots$}%
    \STATE $A^{k} \gets \prox_{L^{-1} g} (B^k - \frac{1}{L} \grad
        f(B^k))$
    \STATE $t_{k+1} = \frac{1 + \sqrt{1 + 4 t_k^2}}{2}$
    \STATE $B^{k+1} = A^{k} + \frac{t_k - 1}{t_{k+1}}
        (A^{k} - A^{k-1})$
    \ENDFOR
    \RETURN $A^k$
  \end{algorithmic}
  \label{alg:fista}
\end{algorithm}

\noindent
Problem~\eqref{eq:procedure2} is solved using a primal-dual algorithm~\cite{chambolle2011first}, see Algorithm~\ref{alg:cp}. It allows to minimize an objective of the form
\begin{equation}
	F(A) = f(K A) + g(A),
\end{equation}
where both $f$ and $g$ are prox-capable (with $f$ non-smooth) and $K$ a linear operator. In our setting we choose this time $K = V$, $f(A) = \norm{A - U}_2$ and $g(A) = \lambda \norm{A}_1$. It is easily proved that
\begin{equation*}
	\prox_{t f}(A) = 
	\begin{cases}
		U &\text{ if } \norm{A - U}_2 \leq t \\
		A - t \frac{A - U}{\norm{A - U}_2} &\text{ if } \norm{A - U}_2 > t,
	\end{cases}
\end{equation*}
which allows to instantiate Algorithm~\ref{alg:cp} for problem~\eqref{eq:procedure2}, using also the Moreau's identity $\prox_{f^*}(A) - A - \prox_f(A)$, see~\cite{bauschke2011convex}, where $f^*$ is the Fenchel conjugate of $f$. In Algorithm~\ref{alg:cp} we use the heuristics described in~\cite{chambolle2011first} to choose the step-sizes $\eta$ and $\tau$.

\begin{algorithm}
  \caption{Primal-dual algorithm}
  \begin{algorithmic}[1]
    \REQUIRE Starting points $A^0, \bar A^0, Z^0$, step-sizes $\eta, \tau > 0$ such that 
    \FOR {$k=0,1,2,\ldots$}%
    \STATE $Z^{k+1} \gets \prox_{\eta f^*}(Z^k + \eta V \bar A^k)$ \\
    \STATE $A^{k+1} \gets \prox_{\tau g}(A^k - \tau V^\top Z^{k+1})$ \\
    \STATE $\bar A^{k+1} \gets A^{k + 1} + \theta (A^{k+1} - A^k)$
    \ENDFOR
    \RETURN $A^k$
  \end{algorithmic}
  \label{alg:cp}
\end{algorithm}

\noindent
Problem~\eqref{eq:procedure3} is solved using parallel splitting~\cite{bauschke2011convex}. First, we need to reformulate the problem. Let us observe that if $\hat A$ is a solution to~\eqref{eq:procedure3}, then any $\hat A + B$
with $B \in \ker(V)$, where $\ker(V) = \{ A \in \R^{m_1 \times m_2} :
V A = 0 \}$, is also a solution. Thus, we will solve the problem on a splitted variable $W = VA$.
We define the linear space $\col(V) = \{ W
\in \R^{n \times m_2} : \exists A \in \R^{m_1 \times m_2}, V A = W
\}$. Then, we have
\begin{equation*}
  V \hat A = \argmin_{W \in \col(V)} \norm{U - W}_2 + \lambda
  \norm{W}_1,
\end{equation*}
so that we end up with the problem
\begin{equation}
  \label{eq:reformulatedsqrtmatrixregression}
  \mini \quad \norm{U - W}_2 + \lambda \norm{W}_1 +
  \delta_{\col(V)}(W),
\end{equation}
where $\delta_C(X)$ stands for the indicator function of a convex set
$C$, defined by $\delta_C(X) = 0$ when $X \in C$ and $\delta_C(X) =
+\infty$ when $X \notin C$. Then, we solve~\eqref{eq:reformulatedsqrtmatrixregression} using parallel
splitting~\cite{bauschke2011convex}. Each function
in~\eqref{eq:reformulatedsqrtmatrixregression} are
prox-capable. Let us define
\begin{equation*}
  f_1(W) = \norm{U - W}_2, \quad f_2(W) = \lambda \norm{W}_1, \quad
  f_3(W) = \delta_{\col(V)}(W).
\end{equation*}
We gave above $\prox_{f_1}$ and $\prox_{f_2}$. We have that
\begin{equation*}
  \prox_{f_3}(W) = \cP_{\col(V)}(W) = V (V^\top
V)^{\dagger} V^\top W,
\end{equation*}
where $\cP_{\col(V)}$ is the projection operator onto the set
$\col(V)$, and where $Z^\dagger$ stands for the pseudo-inverse of $Z$. The parallel splitting algorithm is described
in Algorithm~~\ref{alg:ps} below.

\begin{algorithm}
  \caption{Parallel splitting}
  \begin{algorithmic}[1]
    \REQUIRE Step-sizes $\gamma > 0$, $\tau_k \in [0, 2]$, initial
    values $W_{1}^0, W_{2}^0, W_{3}^0$%
    \FOR {$k=0,1,2,\ldots$}%
    \STATE $P^k \leftarrow \frac 13 (W_{1}^k + W_{2}^k + W_{3}^k)$%
    \STATE $Z_{i}^k \leftarrow \prox_{\gamma f_i}(W_{i}^k)$ for $i =
    1, 2, 3$%
    \STATE $Q^k \leftarrow \frac 13 (Z_{1}^k + Z_{2}^k + Z_{3}^k)$%
    \STATE $W_{i}^{k+1} \leftarrow W_{i}^k + \tau_k (2 Q^k - P^k -
    Z_{i}^k)$ for $i = 1, 2, 3$
    \ENDFOR
    \RETURN $P^k$
  \end{algorithmic}
  \label{alg:ps}
\end{algorithm}

\noindent
Convergence is guaranteed for $\tau_k \in [0, 2]$ such that $\sum_{k \geq 0}
\tau_k (2 - \tau_k) = +\infty$, see~\cite{bauschke2011convex}, we simply choose $\tau_k = 1.9$ in our experiments. An
alternative (but somewhat less direct) method for
solving~\eqref{eq:reformulatedsqrtmatrixregression} is to write an
equivalent conic formulation, and smooth the primal objective by
adding a strongly convex term. Then, the corresponding dual problem can be
solved using first order techniques. This method, called
\texttt{TFOCS}, is the one described in~\cite{becker2010templates} for
solving general convex cone problems.

\subsection{Numerical illustration}

We give several numerical illustrations. First, we show that the optimal choice of $\lambda$ is almost independent of the noise level for the procedures~\eqref{eq:procedure2}, \eqref{eq:procedure3}, while it needs to be increased with $\sigma$ for procedure~\eqref{eq:procedure1}. This fact is illustrated in Figures~\ref{fig:expe1} and~\ref{fig:expe2}. Then, we compare the best prediction errors (among prediction errors obtained for several $\lambda$) of solutions of problems~\eqref{eq:procedure1},~\eqref{eq:procedure2} and~\eqref{eq:procedure3}. This is illustrated in Tables~\ref{tab:pred_errors} and~2.

We simulate data as follows. We pick at random $A_1$ and $A_2$ as, respectively, $m_1 \times
r$ and $m_2 \times r$ matrices with $N(0, 1)$ i.i.d
entries, and we fix $A_0 = A_1 A_2^\top$, which is a $m_1
\times m_2$ matrix with rank $r$ a.s. We pick at random
a $n \times m_1$ matrix $V$, with lines $V_i \in \R^{m_1}$, $i = 1,
\ldots, n$, distributed as a centered Gaussian vectors with covariance
equal to the Toeplitz matrix $\Sigma = (\rho^{-|i-j|})_{1 \leq i, j
  \leq m_1}$. We finally compute $U = V A_0 + \sigma E$, where the
noise matrix $E$ contains $N(0, 1)$ i.i.d entries and $\sigma > 0$ is
the standard deviation. 

We consider the setting $n = 1000$, $m_1 = 200$, $m_2 = 100$, $r = 10$ and $\rho=0.5$, called ``experiment~1'' in Figures and Tables, while we choose $n = 200$, $m_1 = 100$, $m_2 = 400$ and other parameters unchanged for ``experiment~2''.

In Figure~\ref{fig:expe1},~2  Table~\ref{tab:pred_errors},~2 we consider values of $\sigma$ in $\{ 0.1, 0.5, 1.0, 5.0 \}$, and for each value of $\sigma$ we plot the prediction error $\norm{V(\hat A_\lambda - A_0)}_2$ for a parameter $\lambda$ in a grid. We repeat this 10 times, and plot each time the prediction error in Figure~\ref{fig:expe1} and print the average best prediction errors (and standard deviation) in Table~\ref{tab:pred_errors}.

The conclusion of this experiment is the following: we observe that the minimum of the prediction error is achieved for a parameter $\lambda$ that increases with $\sigma$ for procedure~\eqref{eq:procedure1}, while it is almost constant for procedures~\eqref{eq:procedure2} and~\eqref{eq:procedure3}. This confirms numerically the fact, when using square-root least-squares instead of least-squares, the optimal choice of~$\lambda$ can be done independently of the noise level.
We observe also that the minimum prediction errors of each procedure are of the same order for experiment~1, with a slight advantage for procedure~\eqref{eq:procedure3} for each considered values of~$\sigma$, while there is a strong advantage for procedure~\eqref{eq:procedure3} for experiment~2, which corresponds to the case where the number of tasks $m_2$ is larger than the sample size $n$. 

\begin{figure}[htbp]
	\centering
	\includegraphics[width=0.34\textwidth]{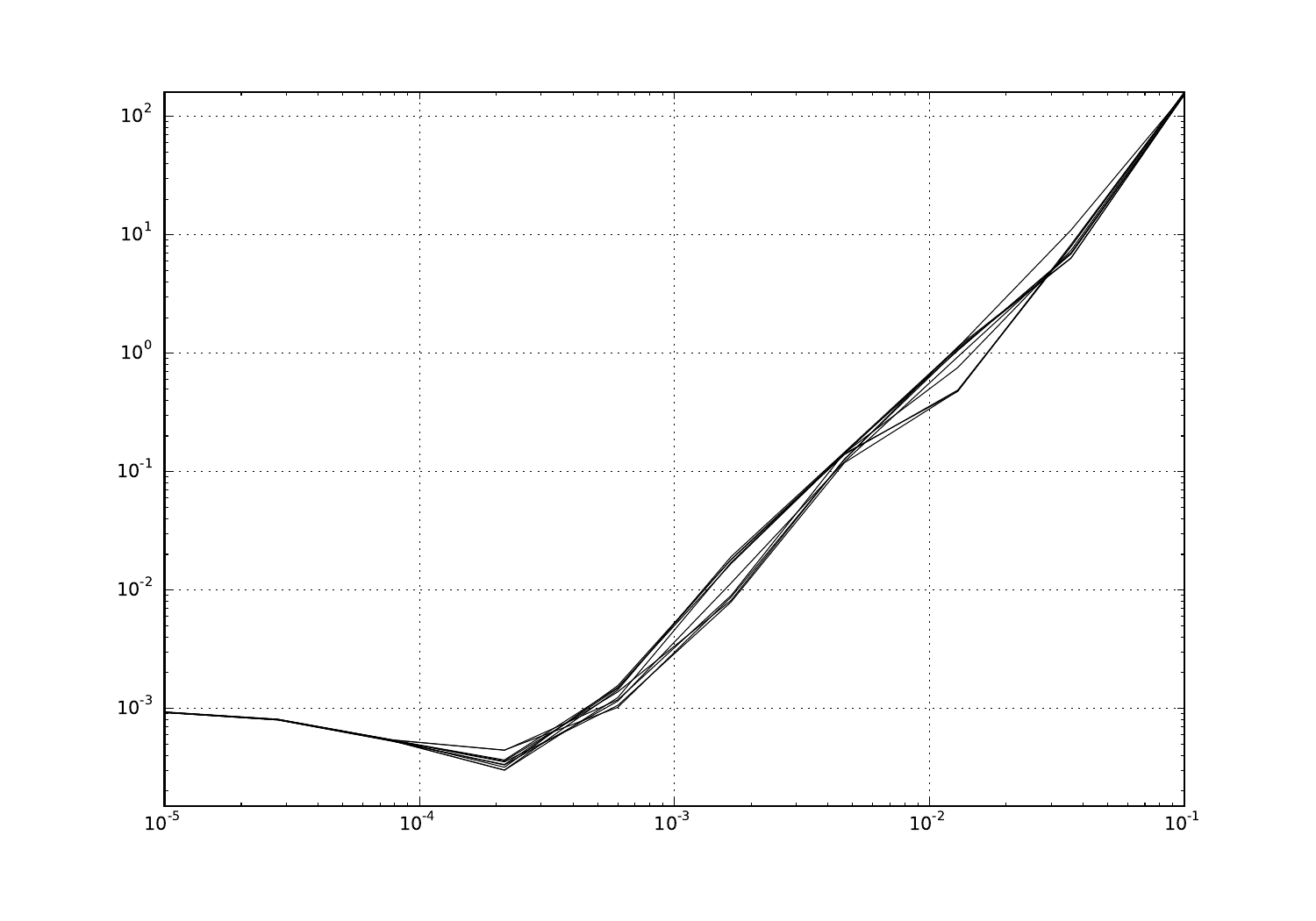}%
	\includegraphics[width=0.34\textwidth]{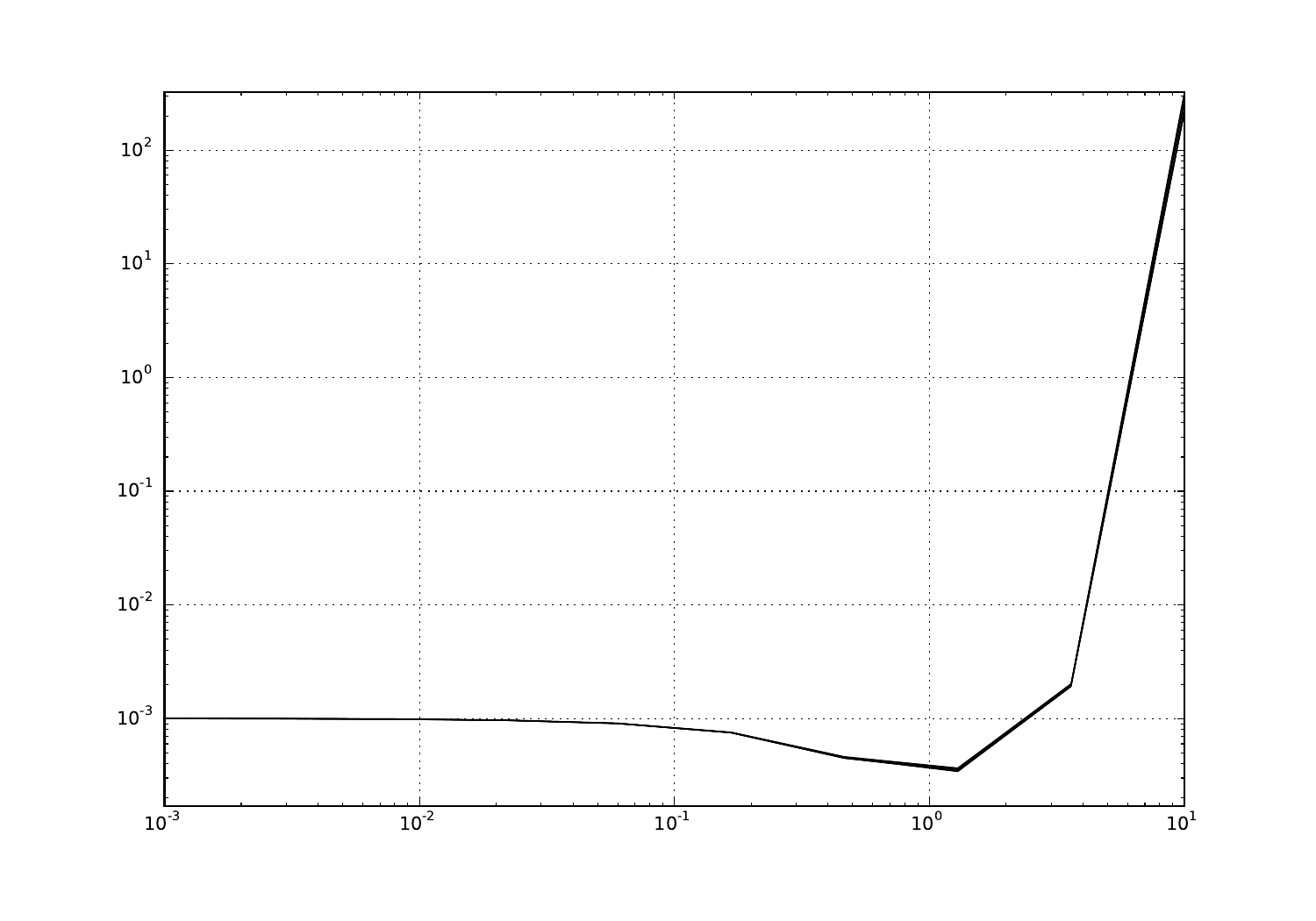}%
	\includegraphics[width=0.34\textwidth]{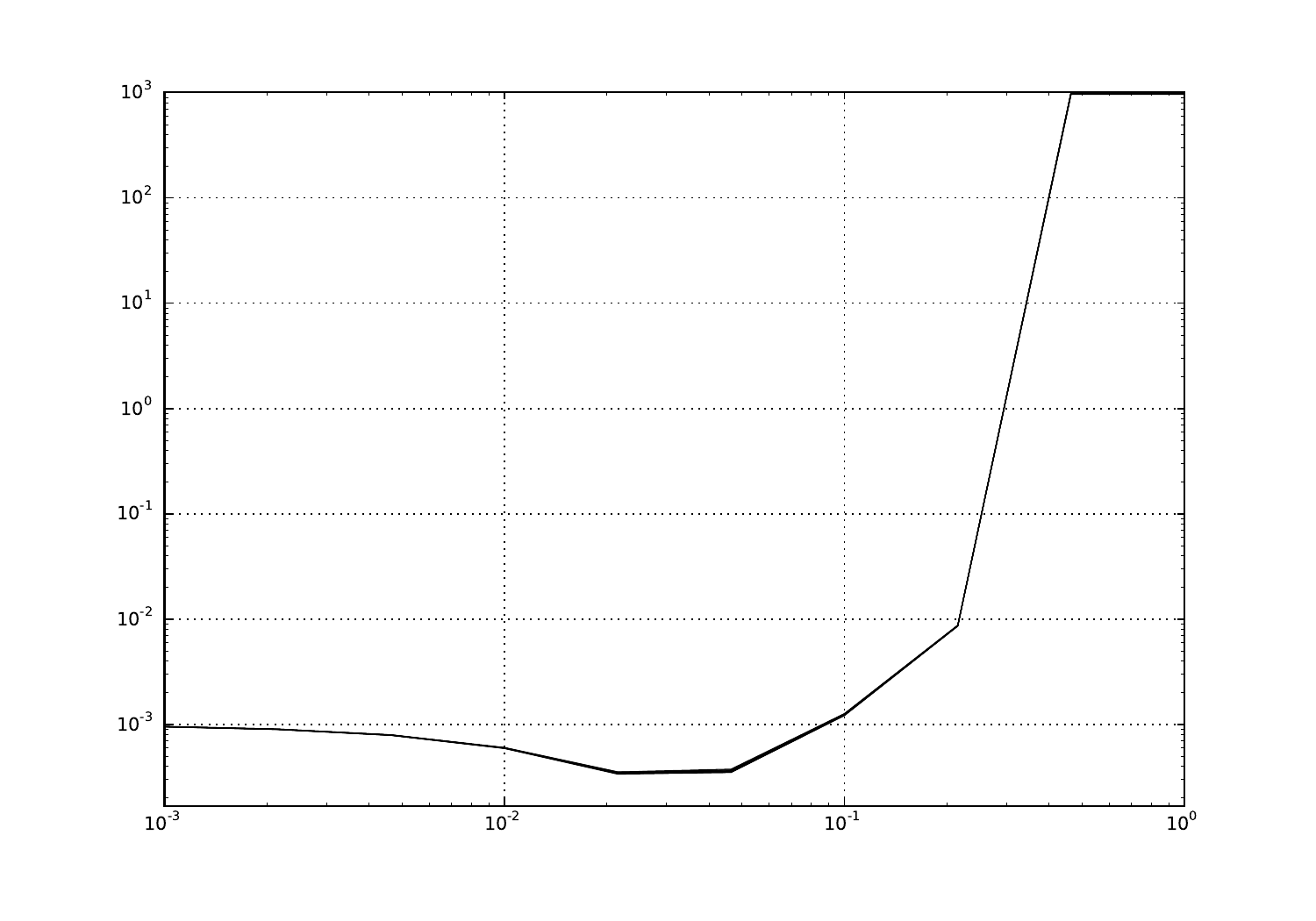}

	\includegraphics[width=0.34\textwidth]{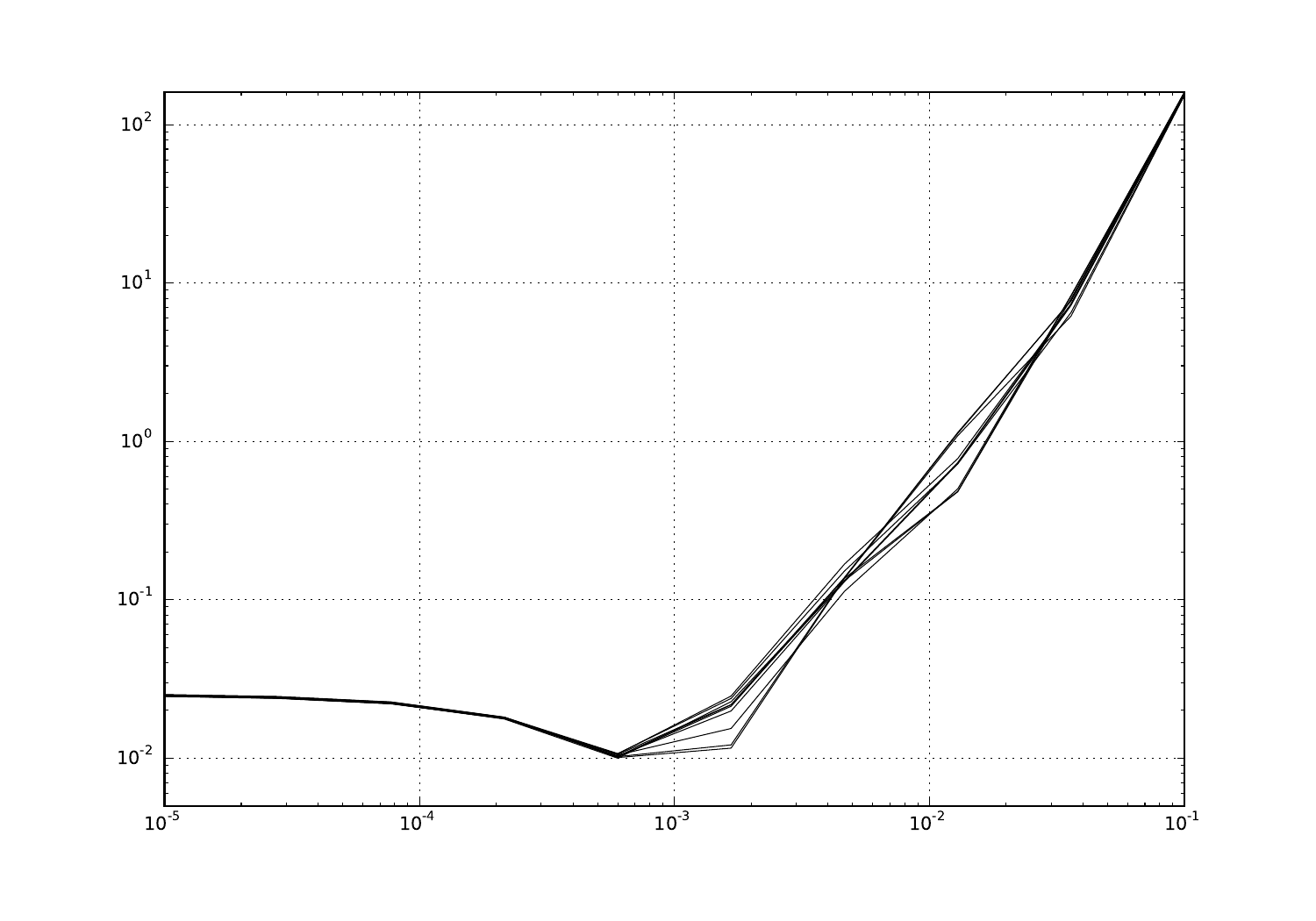}%
	\includegraphics[width=0.34\textwidth]{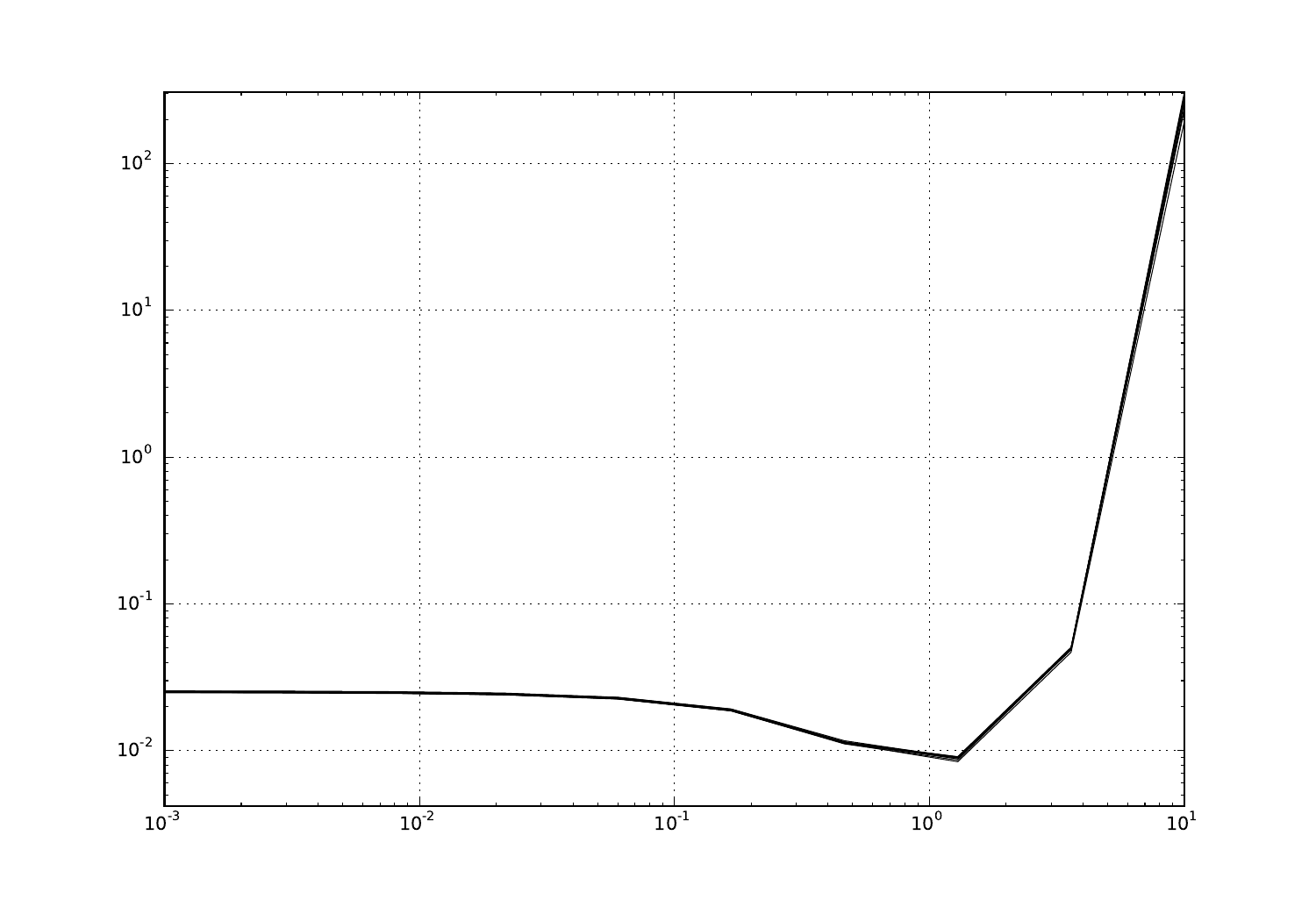}%
	\includegraphics[width=0.34\textwidth]{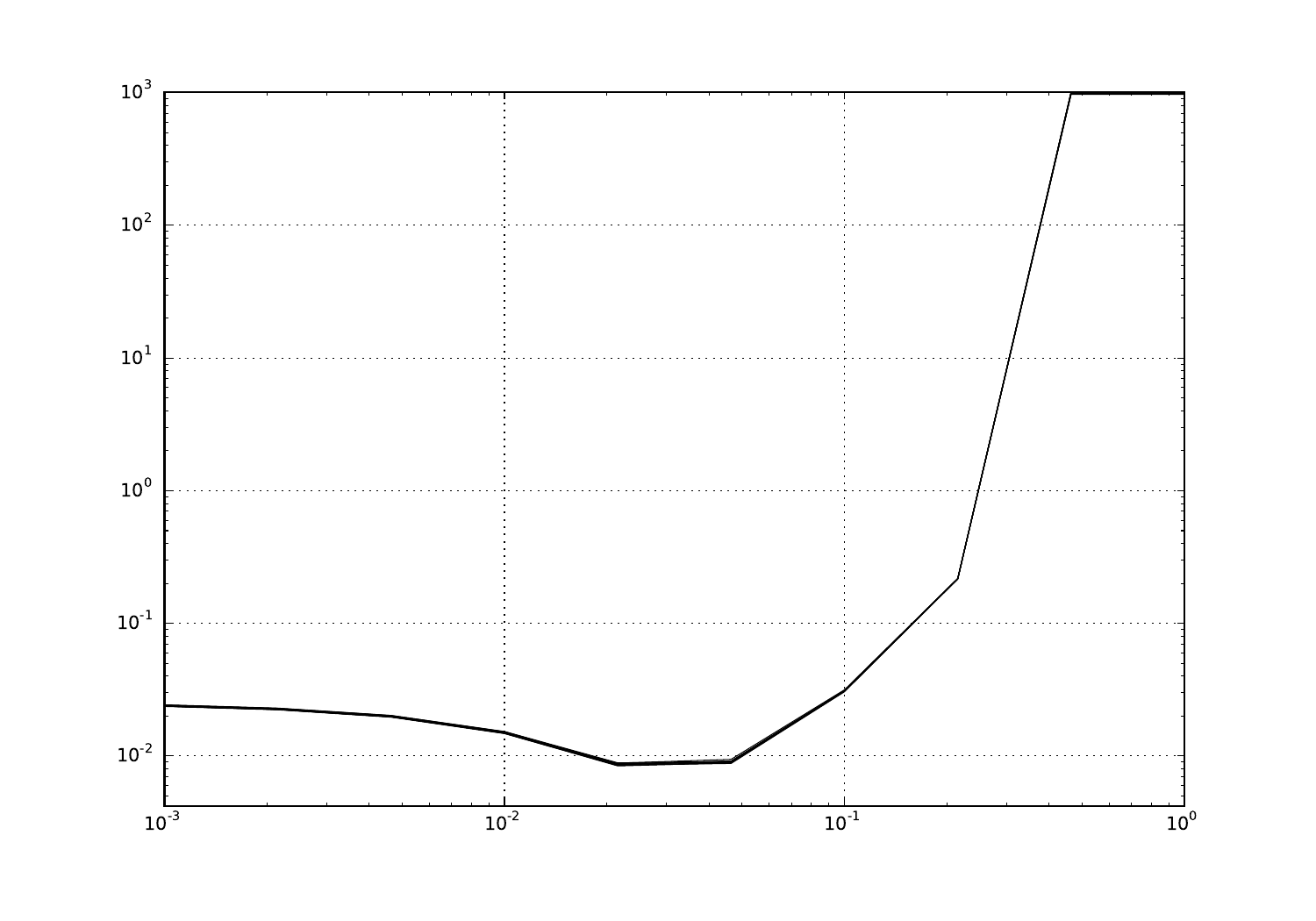}

	\includegraphics[width=0.34\textwidth]{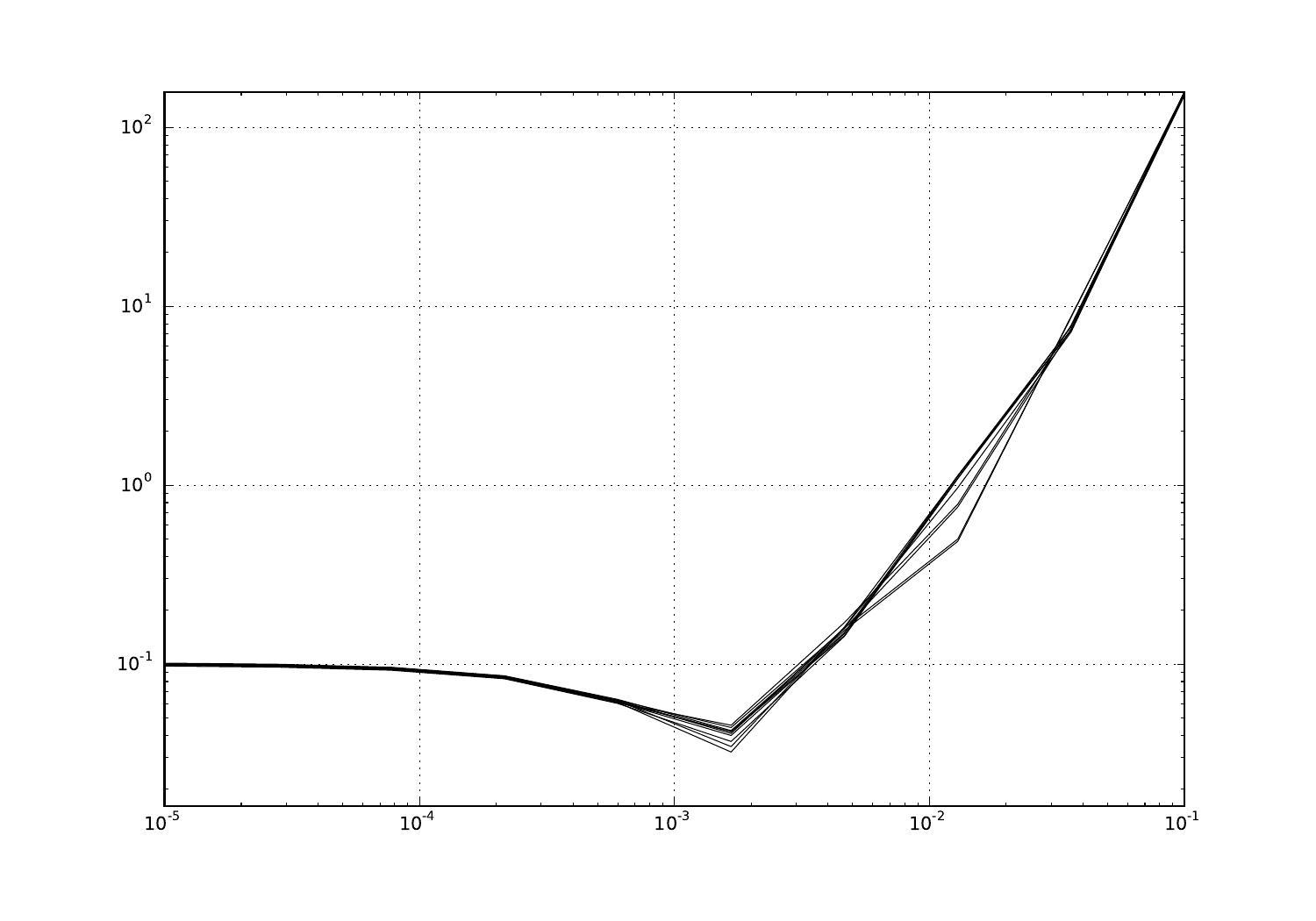}%
	\includegraphics[width=0.34\textwidth]{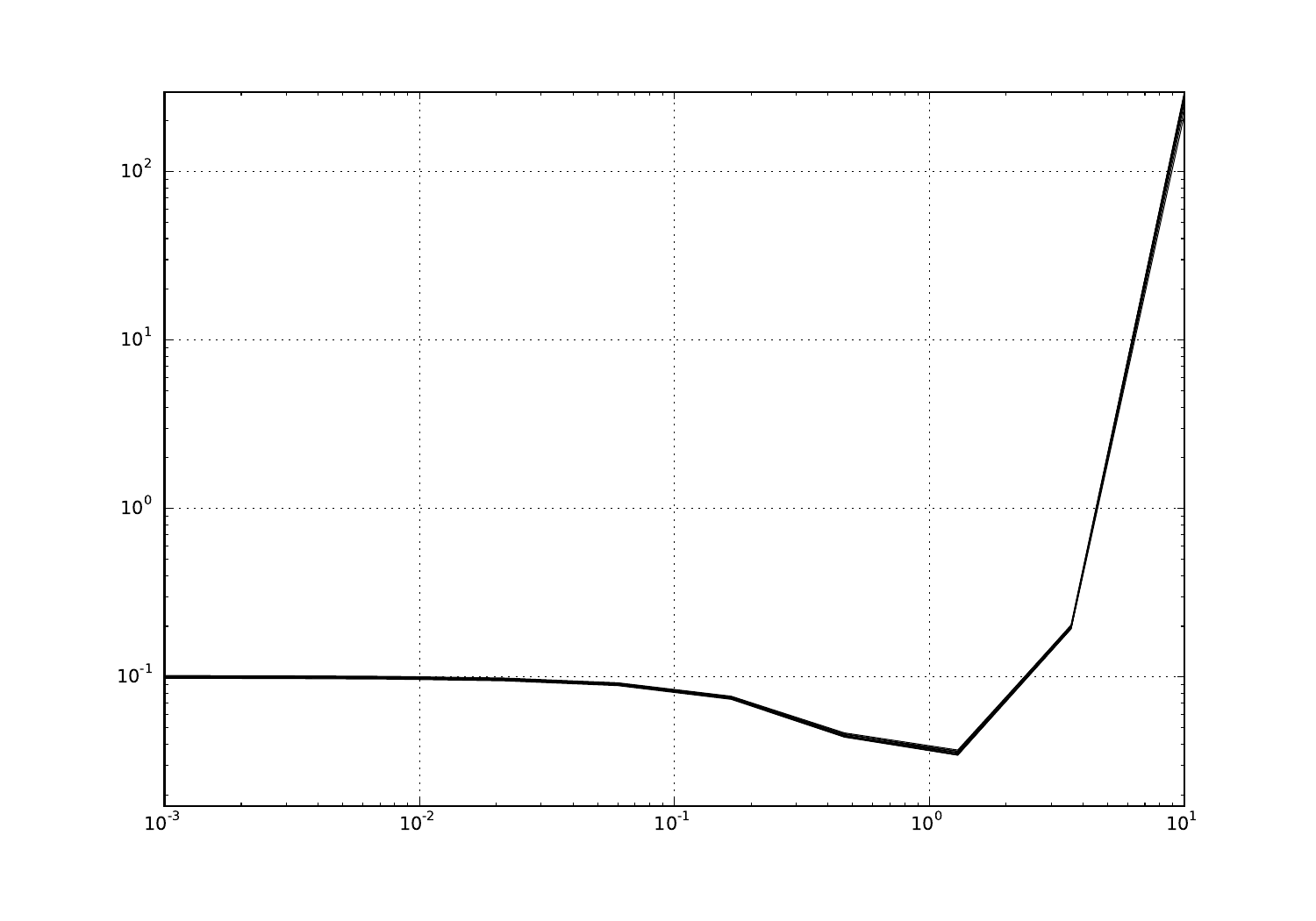}%
	\includegraphics[width=0.34\textwidth]{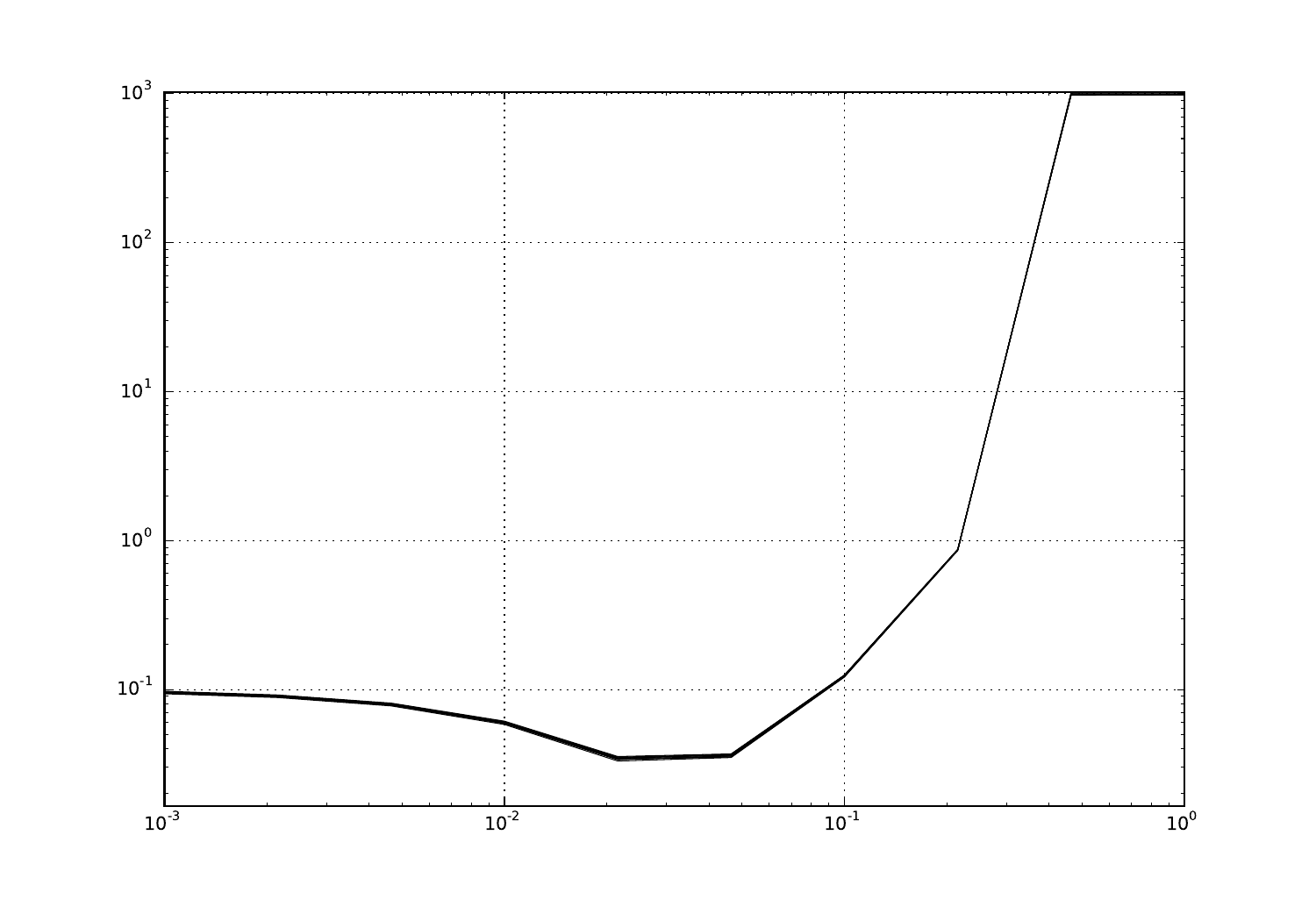}

	\includegraphics[width=0.34\textwidth]{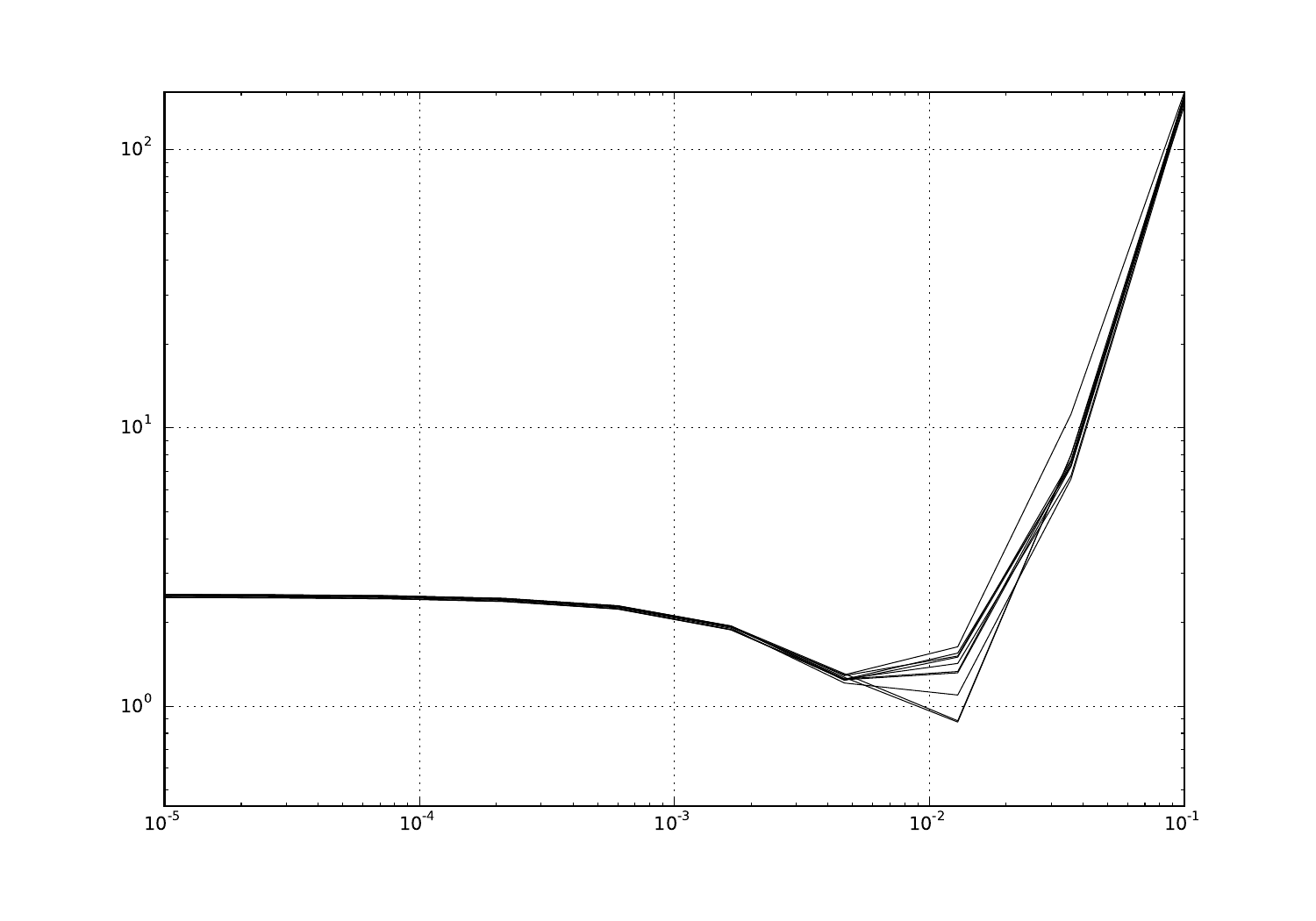}%
	\includegraphics[width=0.34\textwidth]{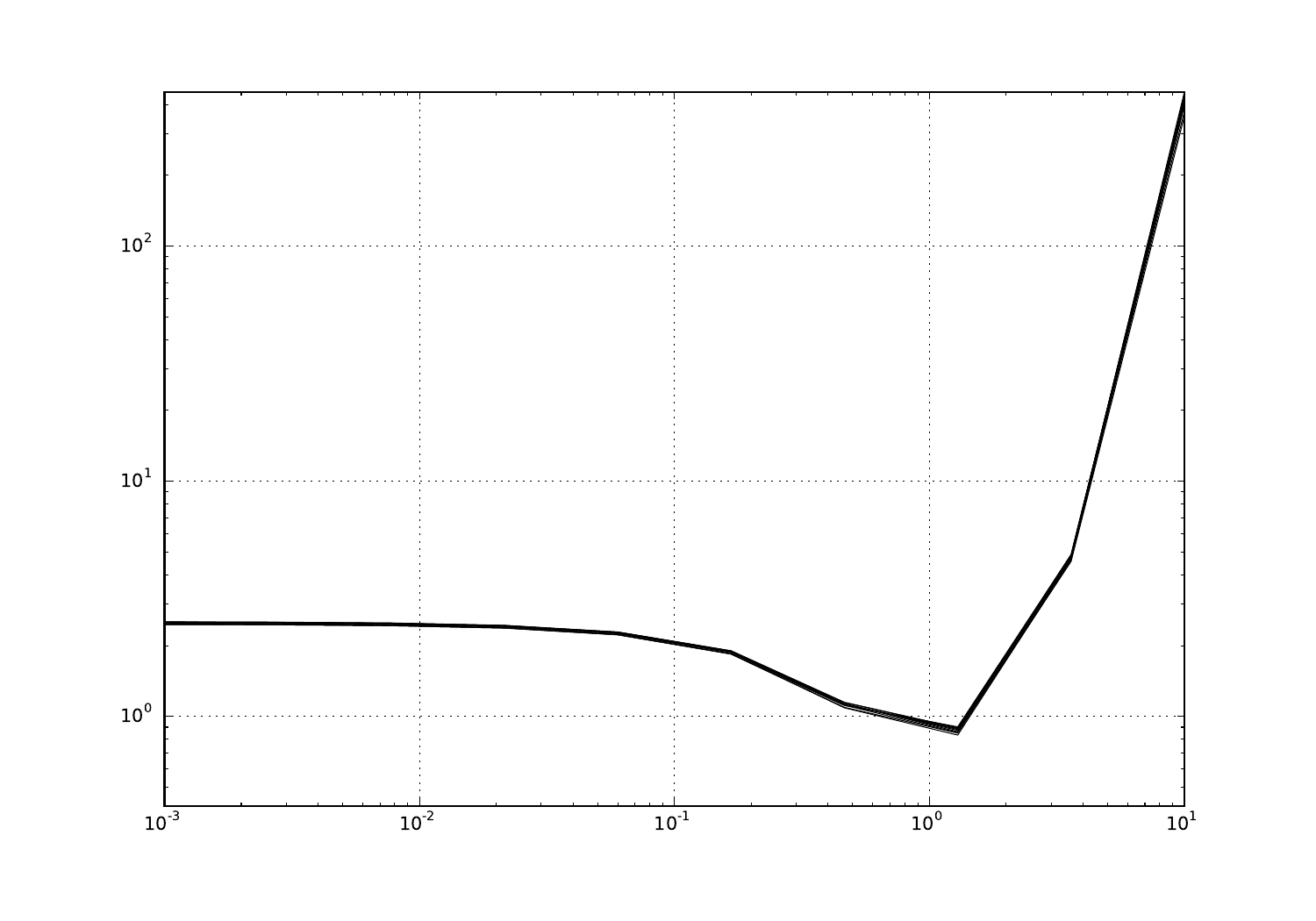}%
	\includegraphics[width=0.34\textwidth]{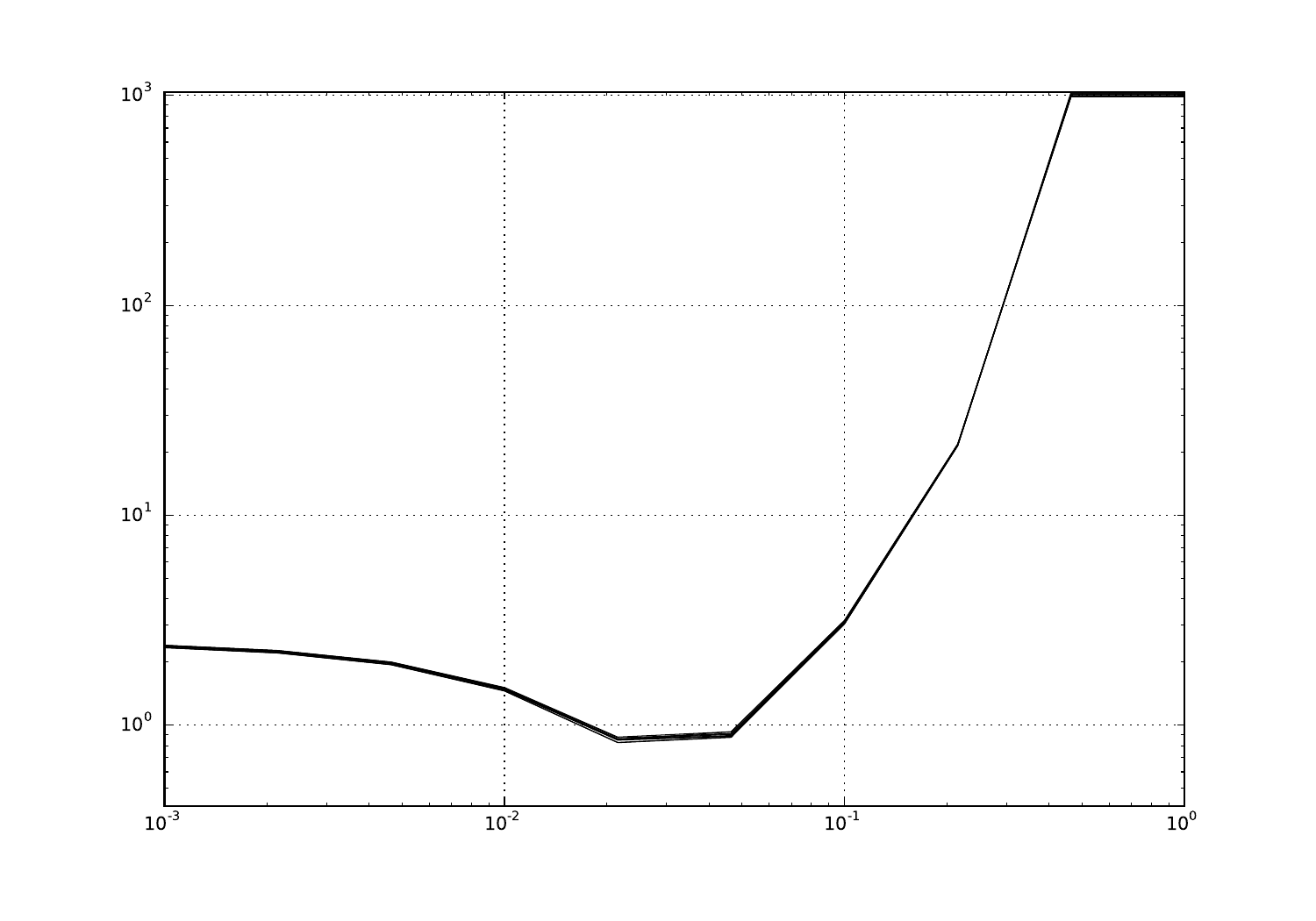}

	\caption{Prediction errors (y-axis) for experiment~1 (see text) for a varying $\lambda$ (x-axis) for procedure~\eqref{eq:procedure1} (first column), procedure~\eqref{eq:procedure2} (second column) and procedure~\eqref{eq:procedure3} (third column). 
	We plot the estimation errors over 10 simulated datasets (corresponding to a line in each figure), for an increasing noise level $\sigma=0.1$ (first line), $\sigma=0.5$ (second line), $\sigma=1.0$ (third line), $\sigma=5.0$ (fourth line).
	We can observe that the optimum $\lambda$ for~\eqref{eq:procedure1} increases with $\sigma$ (see the position of minimum along the first column), while it can be kept almost constant for procedures~\eqref{eq:procedure2} and~\eqref{eq:procedure3}}
	\label{fig:expe1}
\end{figure}

\begin{table}[htbp]
  \centering
  \footnotesize
  \noindent
  \begin{tabular}{lcccc}
    Noise level $\sigma$ & 0.1 & 0.5 & 1.0 & 5.0 \\ \hline
	Procedure~\eqref{eq:procedure1} & 3.56e-04  & 1.03e-02  & 4.01e-02  & 1.17e+00  \\
	& (4.90e-05) & (2.23e-04) & (4.02e-03) & (1.52e-01) \\
	Procedure~\eqref{eq:procedure2} & 3.54e-04  & 8.87e-03  & 3.54e-02  & 8.72e-01  \\
	& (8.66e-06) & (2.01e-04) & (8.34e-04) & (2.17e-02) \\
	Procedure~\eqref{eq:procedure3} & \textbf{3.47e-04} & \textbf{8.65e-03} & \textbf{3.43e-02} & \textbf{8.54e-01} \\
	& (5.16e-06) & (1.44e-04) & (6.73e-04) & (1.56e-02) \\
  \end{tabular}
  \caption{Average best prediction error (and standard deviation) for experiment~1 of the considered procedures for several values $\sigma$. Procedure~\eqref{eq:procedure3} introduced in this paper always leads to a slight improvement.}
  \label{tab:pred_errors}
\end{table}

\begin{figure}[htbp]
	\centering
	\includegraphics[width=0.34\textwidth]{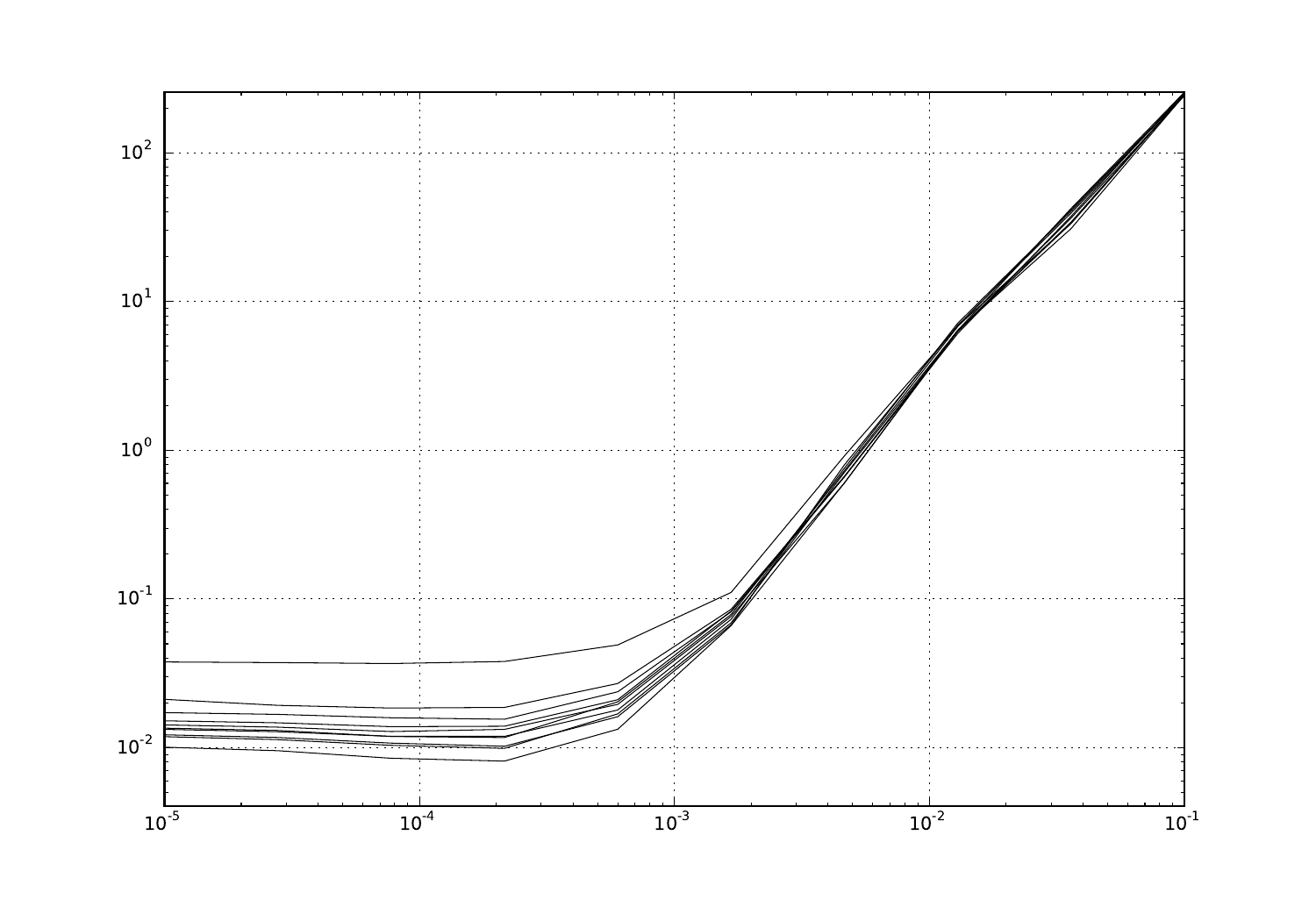}%
	\includegraphics[width=0.34\textwidth]{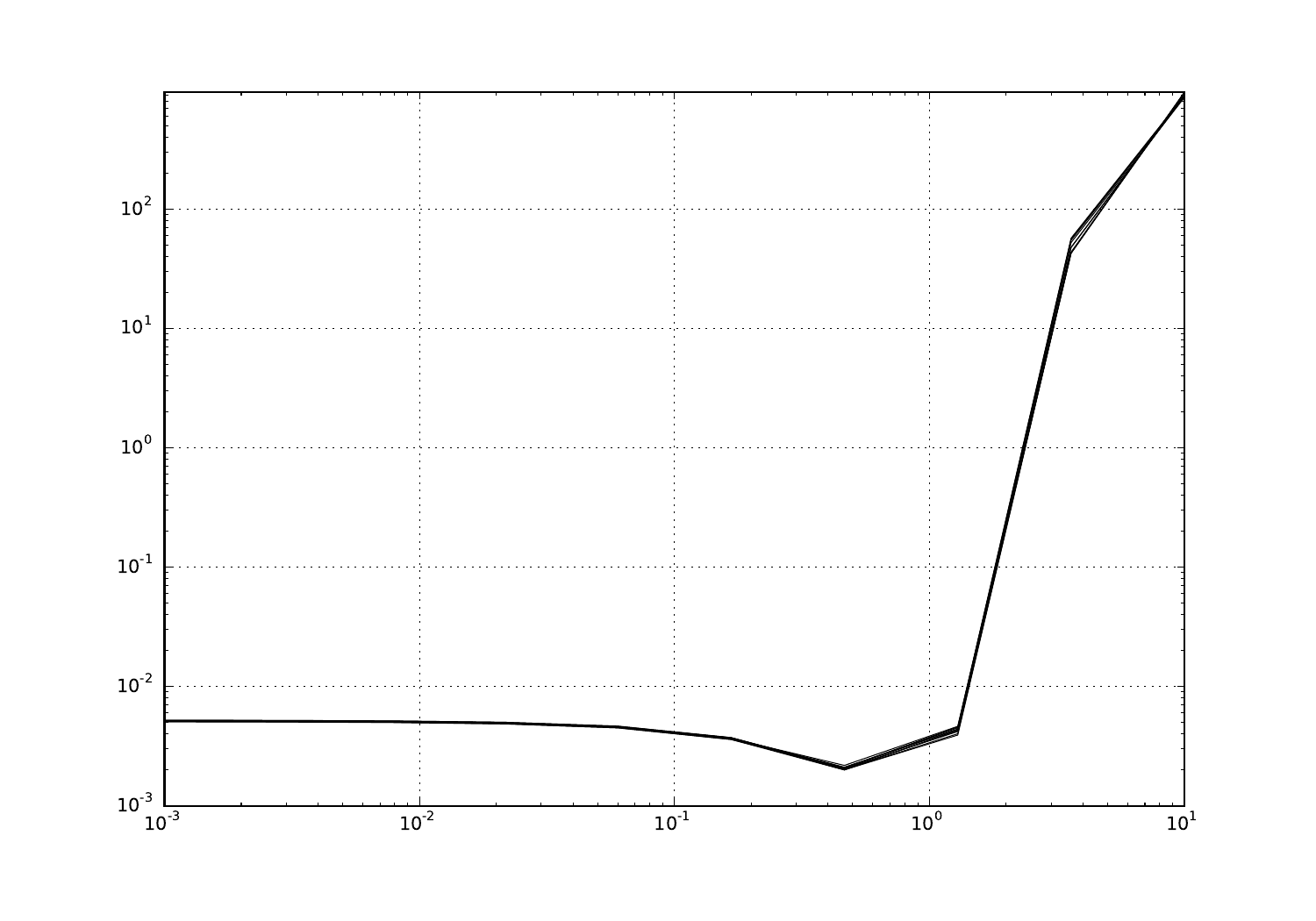}%
	\includegraphics[width=0.34\textwidth]{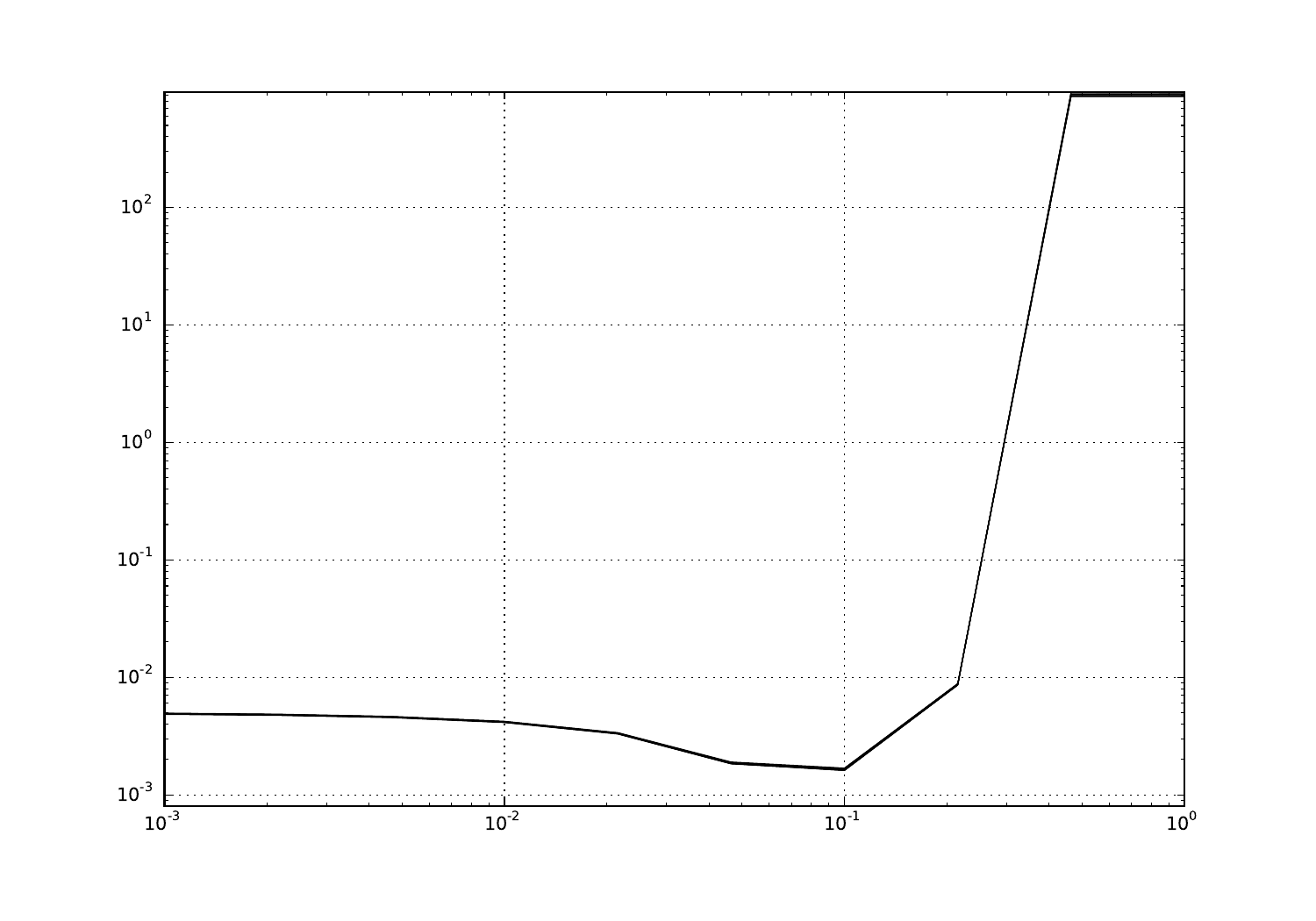}

	\includegraphics[width=0.34\textwidth]{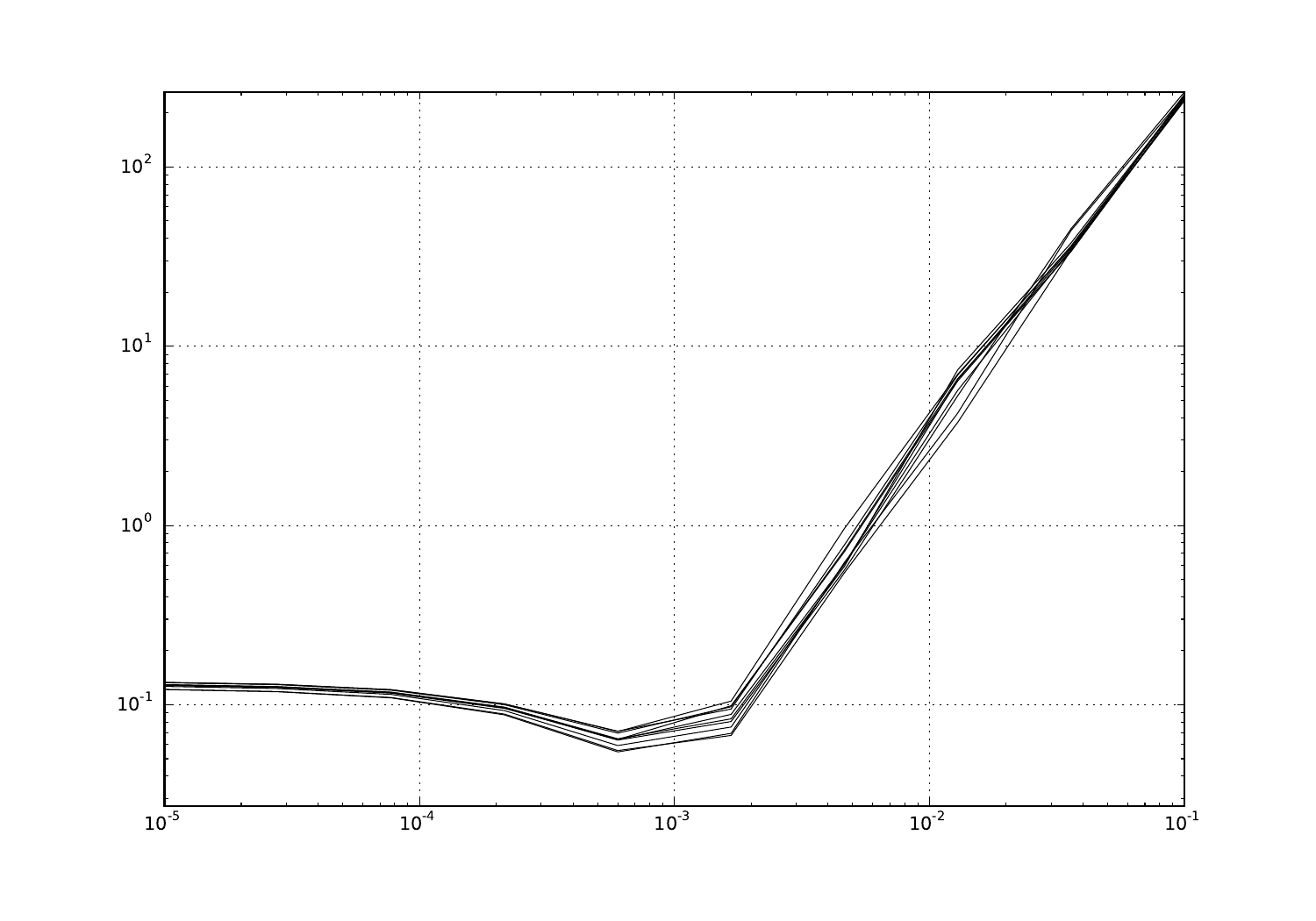}%
	\includegraphics[width=0.34\textwidth]{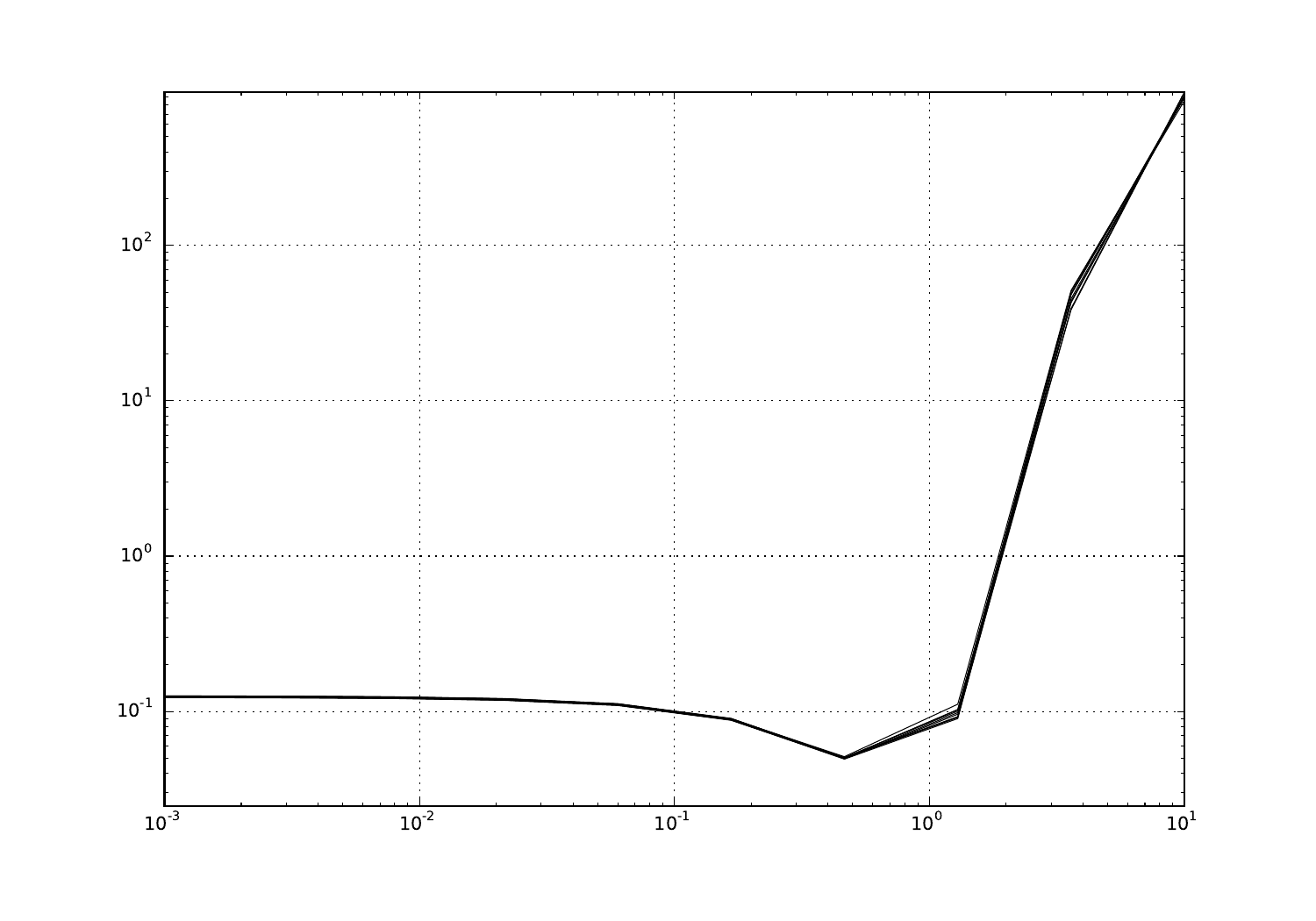}%
	\includegraphics[width=0.34\textwidth]{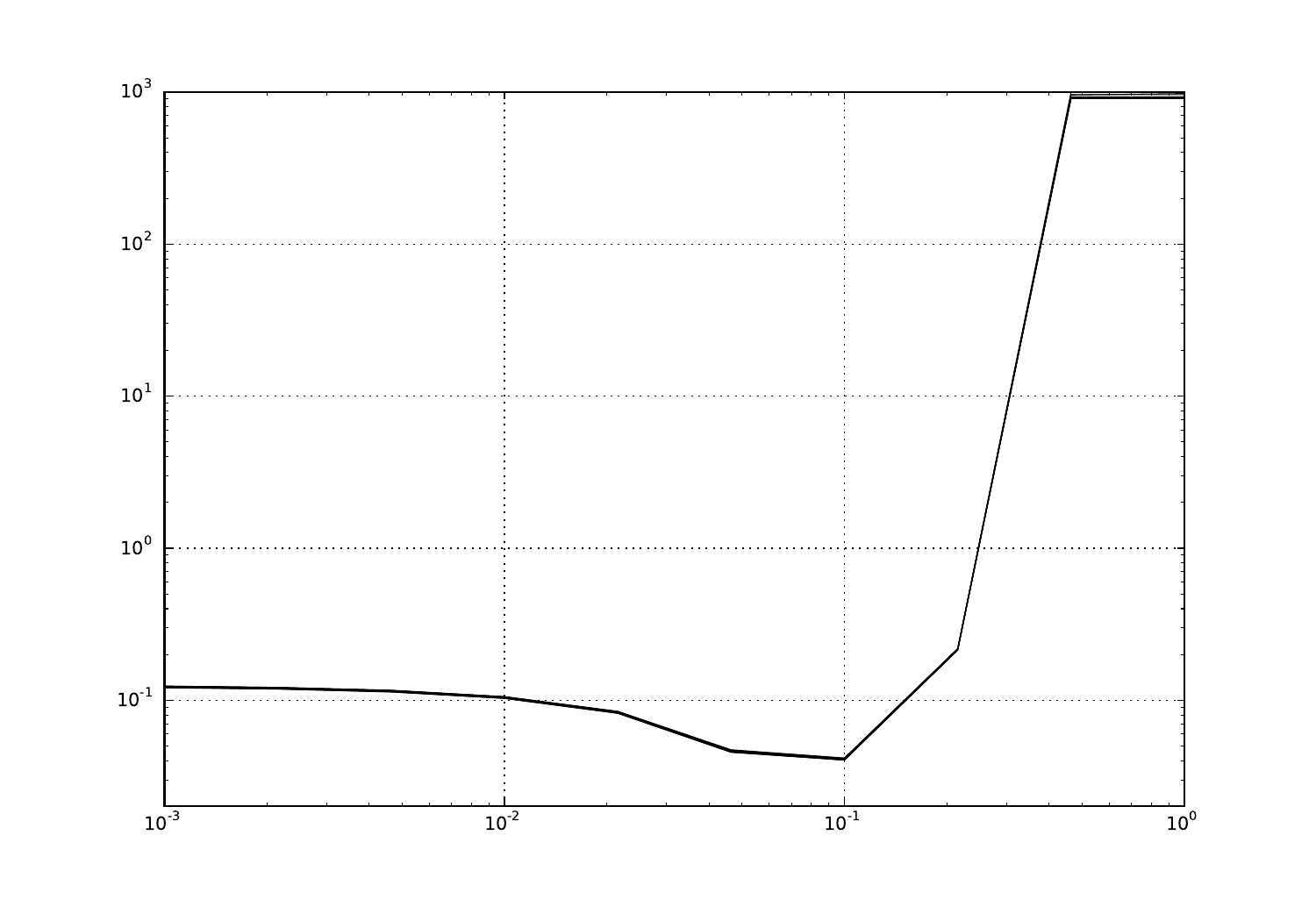}

	\includegraphics[width=0.34\textwidth]{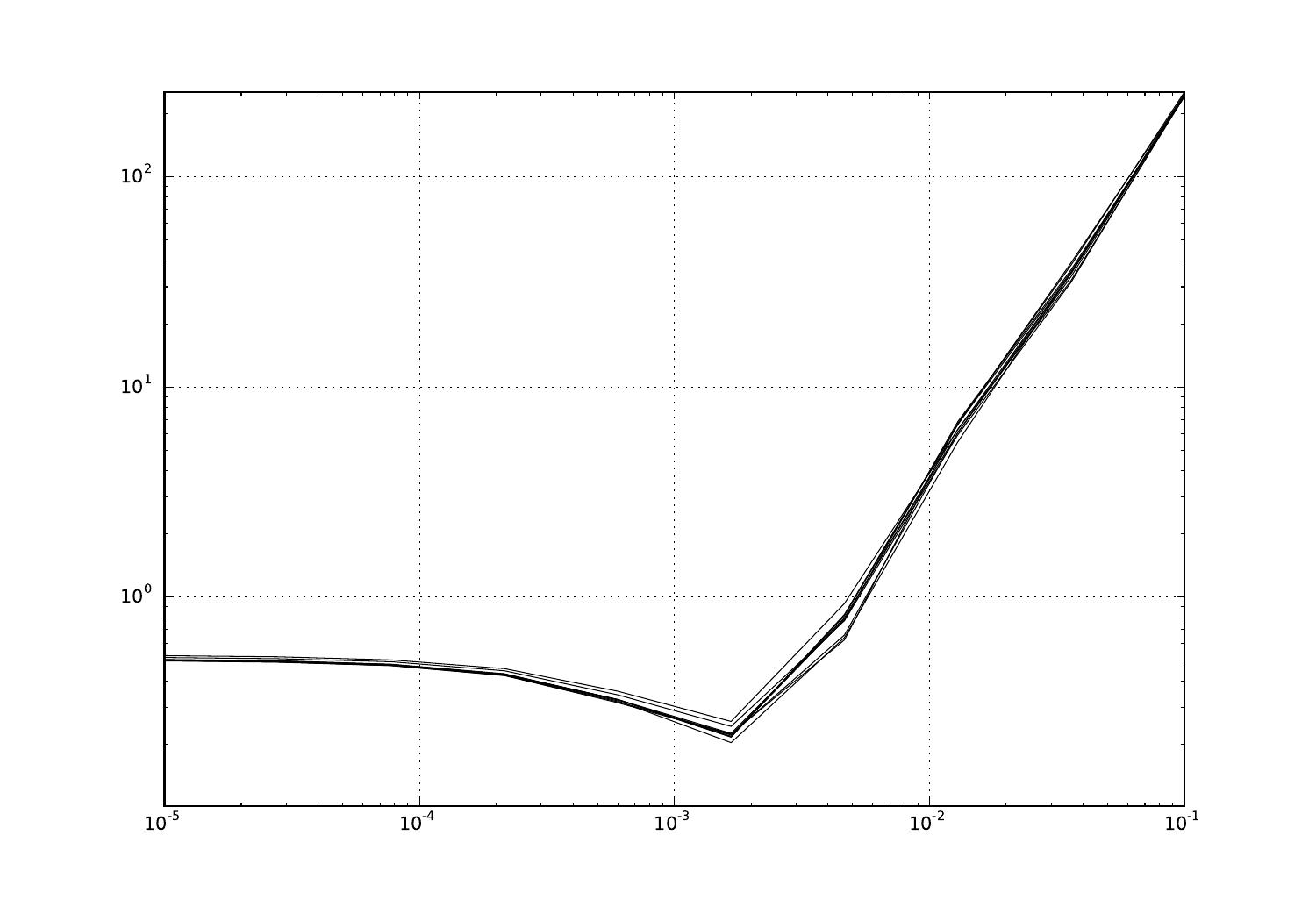}%
	\includegraphics[width=0.34\textwidth]{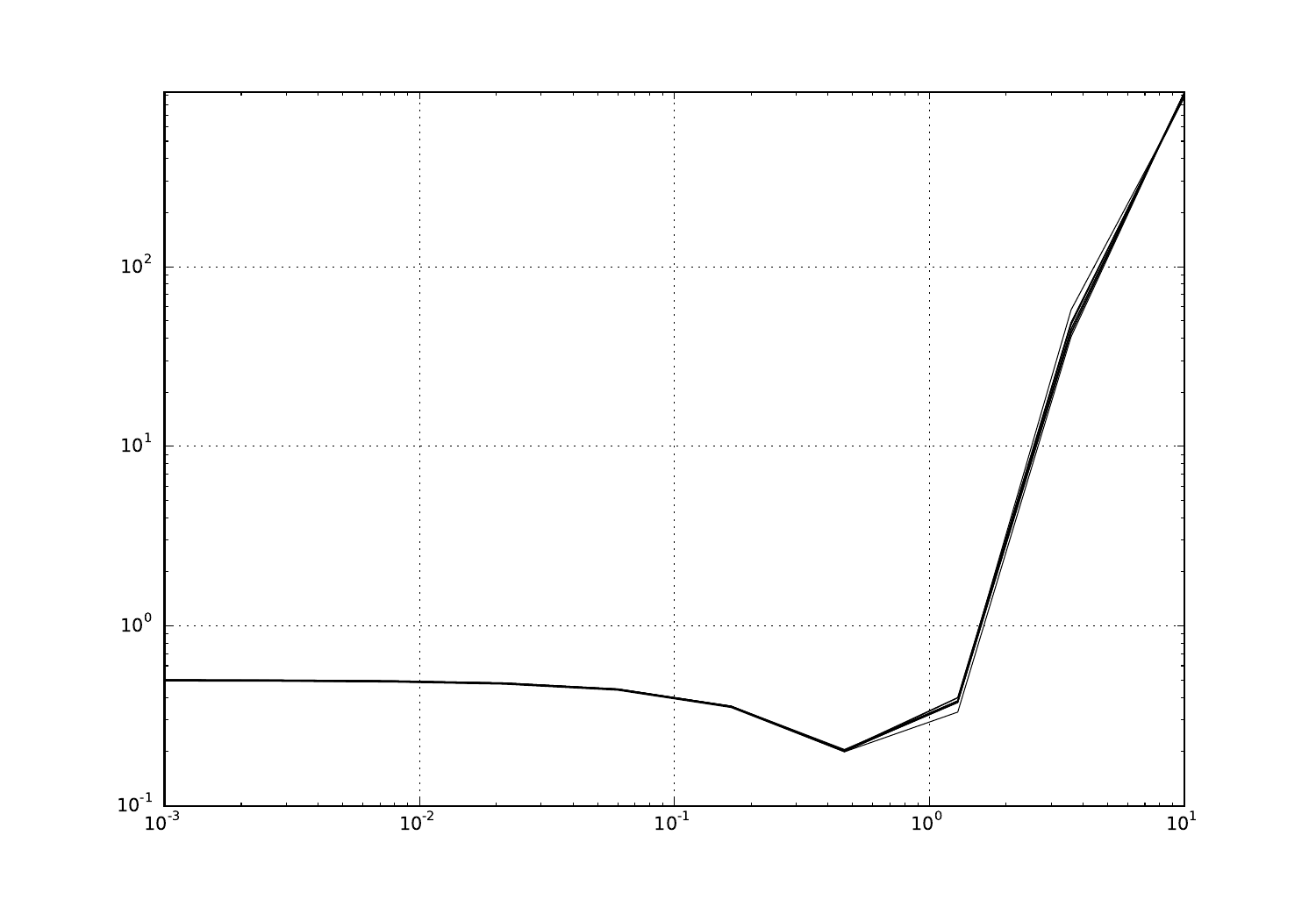}%
	\includegraphics[width=0.34\textwidth]{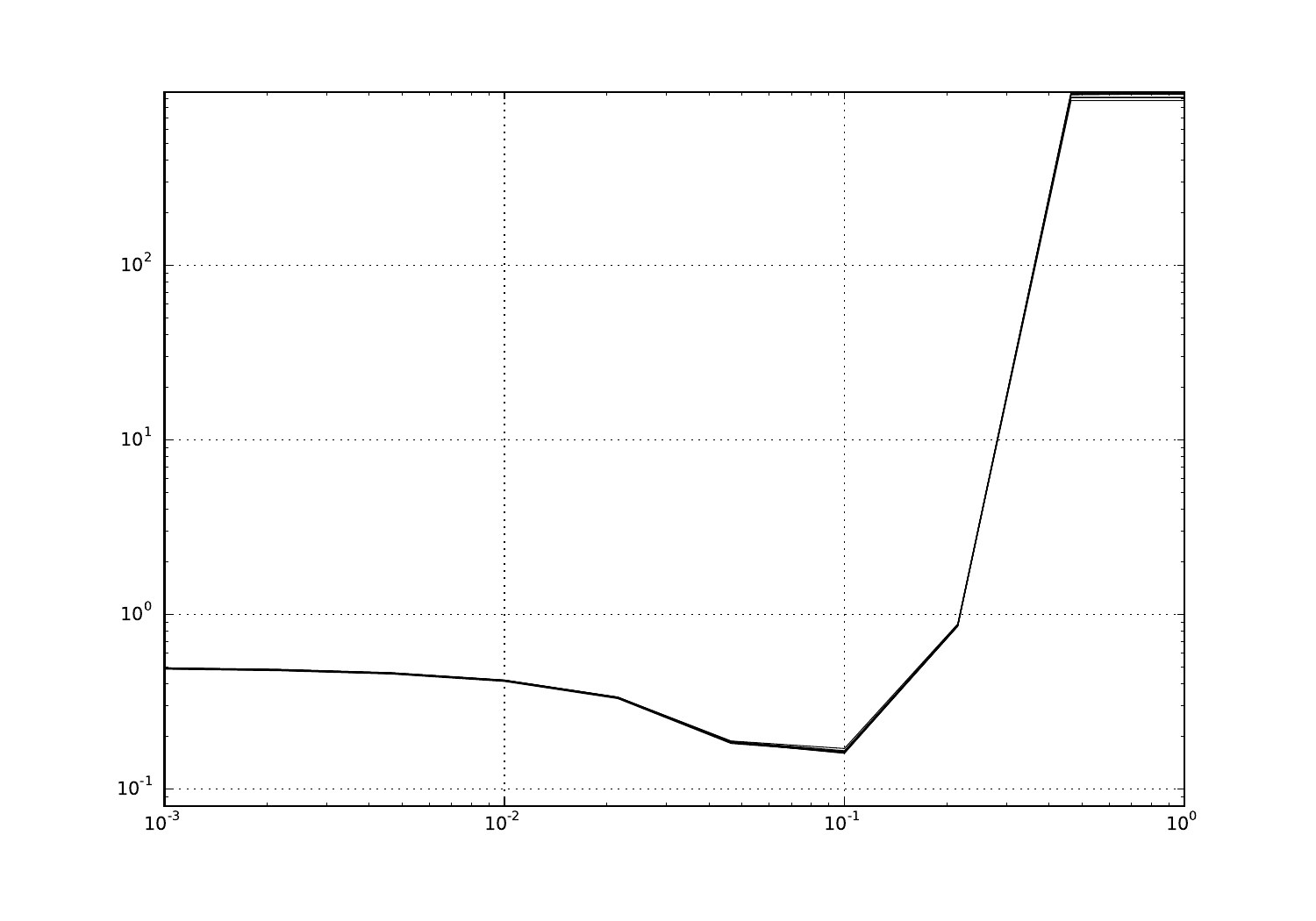}

	\includegraphics[width=0.34\textwidth]{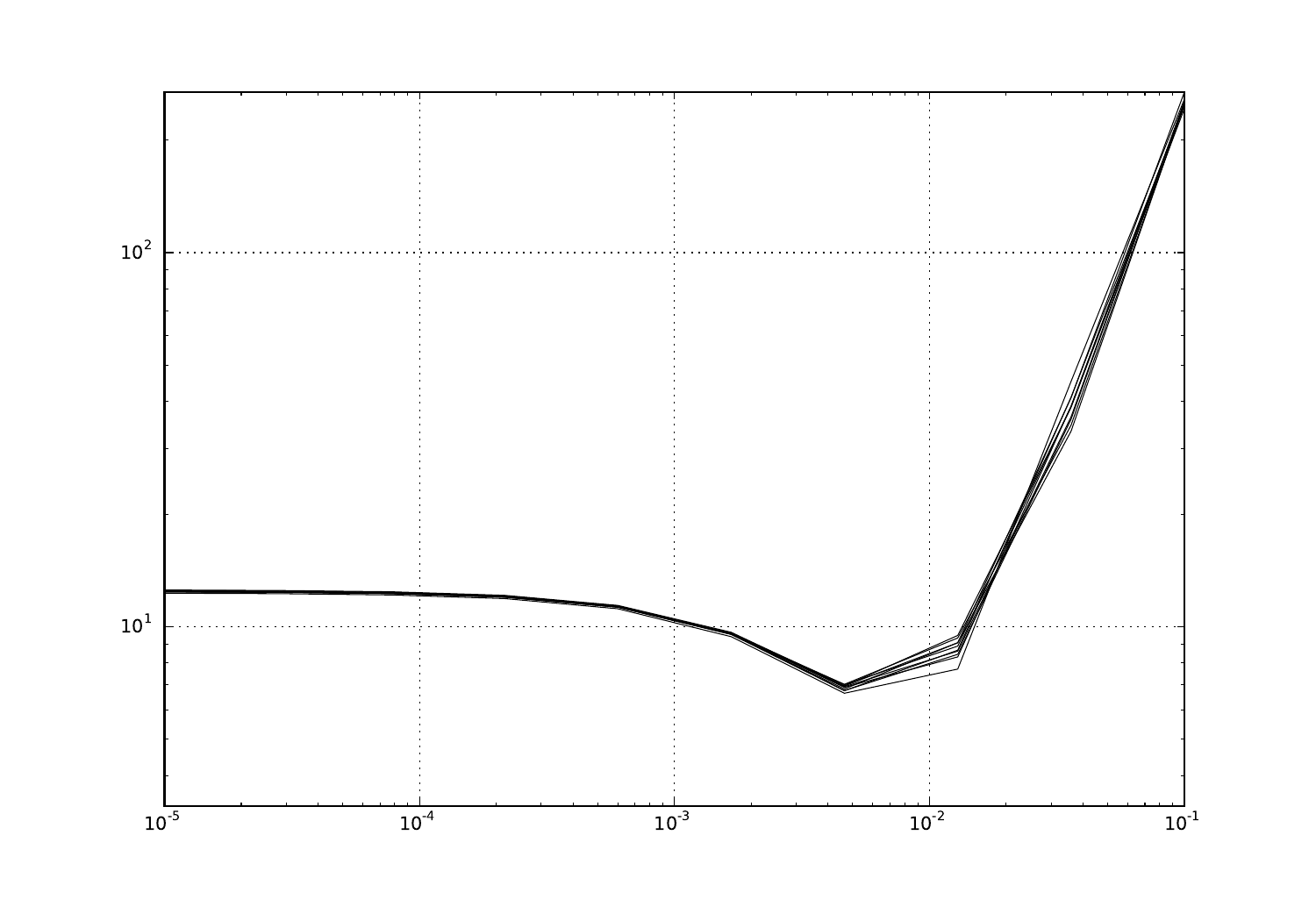}%
	\includegraphics[width=0.34\textwidth]{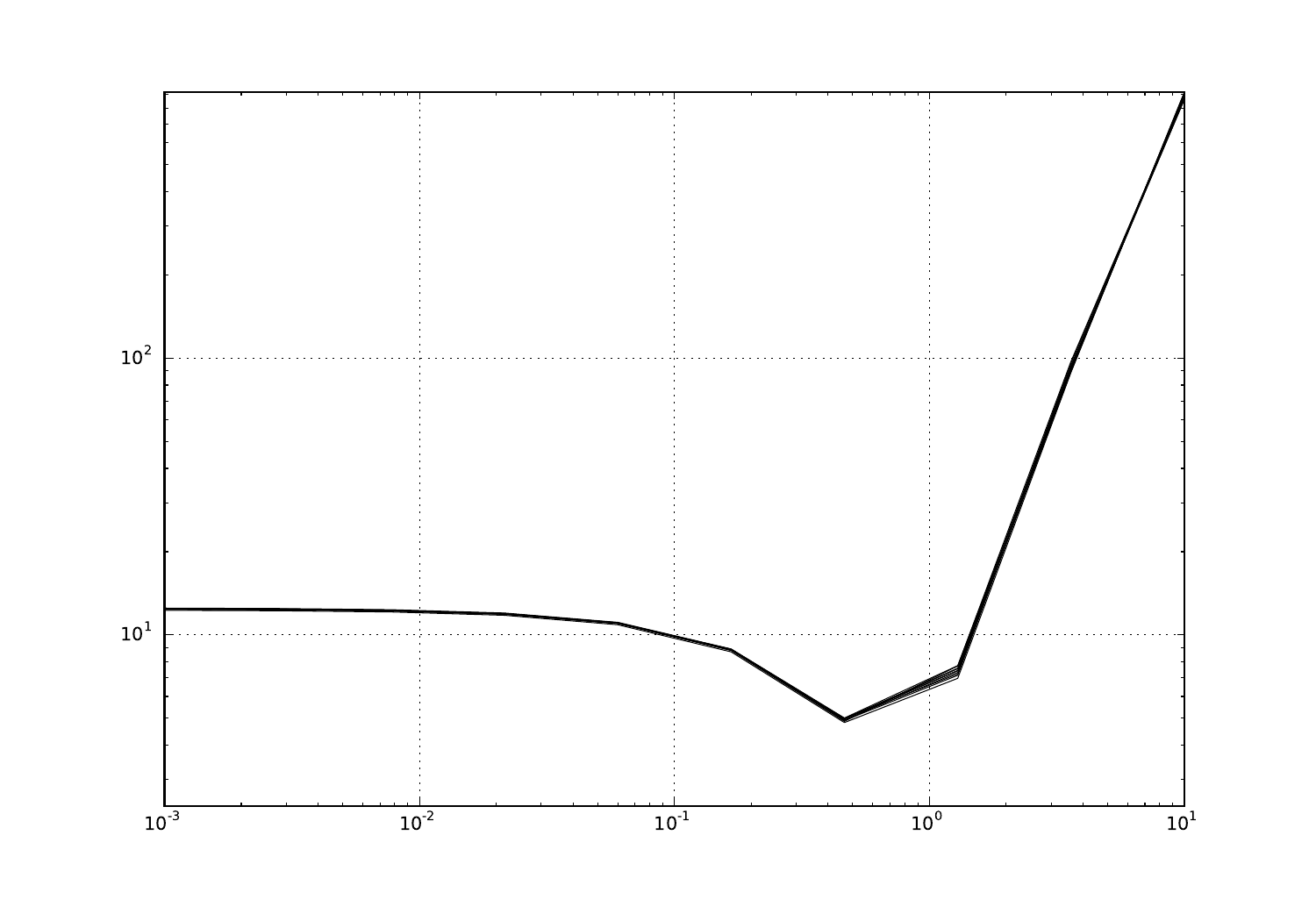}%
	\includegraphics[width=0.34\textwidth]{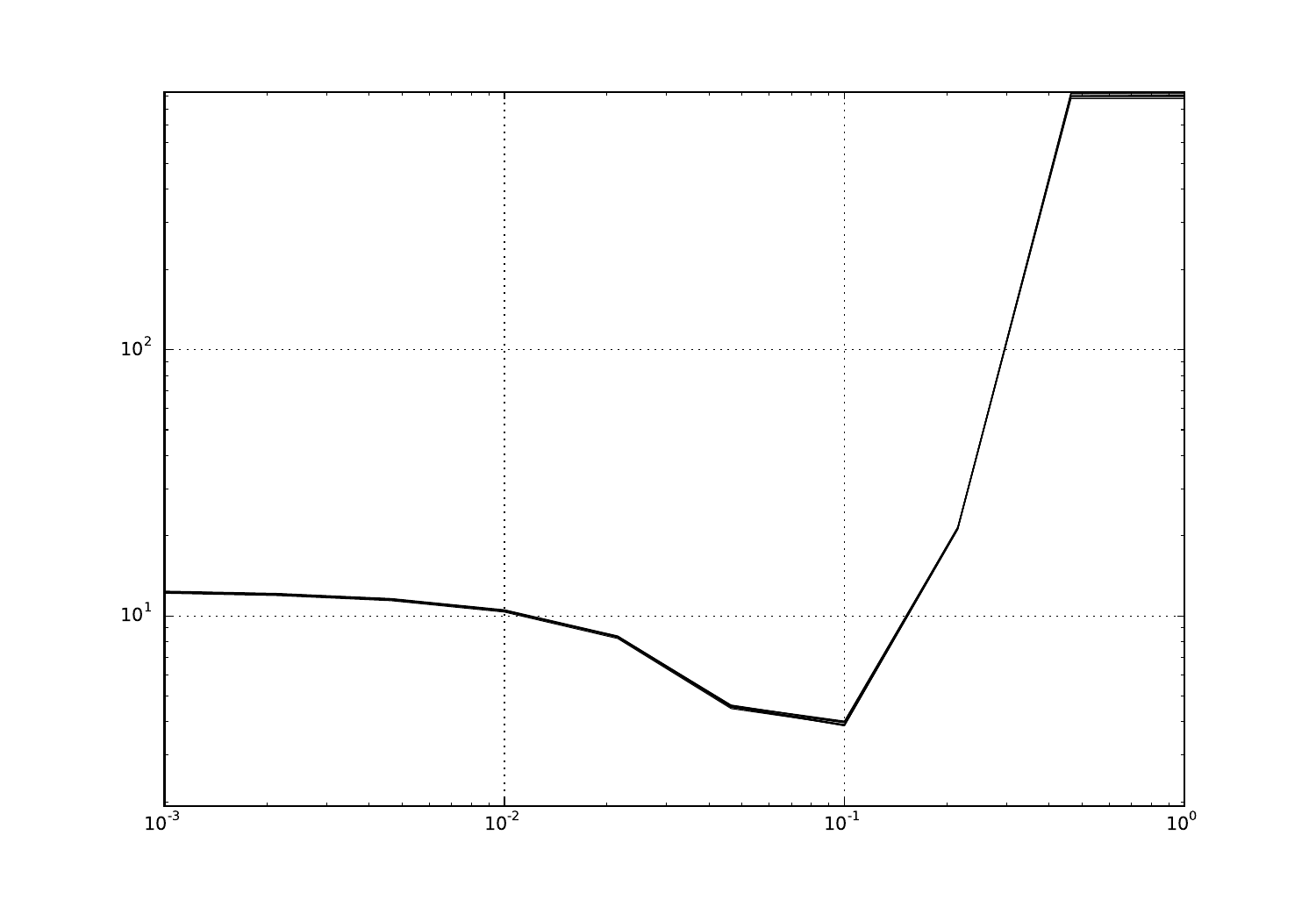}

	\caption{Prediction errors (y-axis) for experiment~2 (see text) for a varying $\lambda$ (x-axis) for procedure~\eqref{eq:procedure1} (first column), procedure~\eqref{eq:procedure2} (second column) and procedure~\eqref{eq:procedure3} (third column). 
	We plot the estimation errors over 10 simulated datasets (corresponding to a line in each figure), for an increasing noise level $\sigma=0.1$ (first line), $\sigma=0.5$ (second line), $\sigma=1.0$ (third line), $\sigma=5.0$ (fourth line).
	We can observe that the optimum $\lambda$ for~\eqref{eq:procedure1} increases with $\sigma$ (see the position of minimum along the first column), while it can be kept almost constant for procedures~\eqref{eq:procedure2} and~\eqref{eq:procedure3}}
	\label{fig:expe2}
\end{figure}

\begin{table}[htbp]
\label{tab:pred_errors2}
  \centering
  \footnotesize
  \noindent
  \begin{tabular}{lcccc}
    Noise level $\sigma$ & 0.1 & 0.5 & 1.0 & 5.0 \\ \hline
	Procedure~\eqref{eq:procedure1} & 1.50e-02   & 6.37e-02  & 2.24e-01  & 6.87e+00   \\
	& (7.82e-03)  & (5.59e-03) & (1.42e-02)  & (1.17e-01) \\
	Procedure~\eqref{eq:procedure2} & 2.05e-03   & 5.01e-02  & 2.01e-01  & 4.95e+00  \\
	& (5.37e-05) & (4.93e-04) & (1.79e-03) & (5.63e-02) \\
	Procedure~\eqref{eq:procedure3} & \textbf{1.64e-03} &  \textbf{4.10e-02} & \textbf{1.64e-01}  & \textbf{3.93e+00} \\
	& (2.61e-05) & (4.40e-04) & (2.78e-03) & (5.87e-02)\\
  \end{tabular}
  \caption{Average best prediction error (and standard deviation) for experiment~2 of the considered procedures for several values $\sigma$. Procedure~\eqref{eq:procedure3} introduced in this paper leads to a strong improvement in this case.}
  
\end{table}

\appendix
\section{Proof of Theorem \ref{thm1}}\label{proof_thm_1}
The proof of Theorem \ref{thm1} is based on the ideas of the proof of Theorem 1 in \cite{Koltchinskii-Tsybakov}. However, as the statistical structure of our estimator is different from that of the estimator proposed in \cite{Koltchinskii-Tsybakov}, the proof requires several modifications and additional information on the behaviour of the estimator. This information is given in Lemmas \ref{l1} and \ref{l2}. In particular, Lemma \ref{l1} provides a bound on the rank of our estimator. Its proof is given in Appendix \ref{proof_lemma_1}
\begin{lemma}\label{l1}
\begin{equation*}
\rank(\hat A)\leq 1/\lambda^{2}.
\end{equation*}
\end{lemma}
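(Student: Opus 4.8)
The plan is to exploit the first-order optimality conditions of the convex program defining $\hat A$. Since $F(A)=\Vert A-\mathbf X\Vert_2+\lambda\Vert A\Vert_1$ is convex and finite everywhere, $\hat A$ is a minimizer if and only if $0\in\partial F(\hat A)$, and by the sum rule for subdifferentials this means there exist a subgradient $G\in\partial\Vert\hat A\Vert_1$ of the nuclear norm and a subgradient $D$ of the Frobenius term $A\mapsto\Vert A-\mathbf X\Vert_2$ at $\hat A$ with $D+\lambda G=0$. First I would record the two subdifferentials explicitly. When $\hat A\neq\mathbf X$, the map $A\mapsto\Vert A-\mathbf X\Vert_2$ is differentiable at $\hat A$ with gradient $D=(\hat A-\mathbf X)/\Vert\hat A-\mathbf X\Vert_2$, a matrix of unit Frobenius norm; when $\hat A=\mathbf X$ its subdifferential is the unit Frobenius ball $\{D:\Vert D\Vert_2\le 1\}$. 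So in either case $\Vert D\Vert_2\le 1$.

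The heart of the argument is the structure of the nuclear-norm subdifferential. Writing $r=\rank(\hat A)$ and letting $\hat A=U\Sigma V^{T}$ be a thin singular value decomposition with $U\in\mathbb R^{m_1\times r}$ and $V\in\mathbb R^{m_2\times r}$, the standard characterization gives
\begin{equation*}
\partial\Vert\hat A\Vert_1=\left\{UV^{T}+W:\ U^{T}W=0,\ WV=0,\ \Vert W\Vert_\infty\le 1\right\}.
\end{equation*}
Thus $G=UV^{T}+W$, where the conditions $U^{T}W=0$ and $WV=0$ say precisely that the column space and the row space of $W$ are orthogonal to those of $UV^{T}$. The key observation is that, because the two pieces act on mutually orthogonal subspaces, the singular values of $G$ are exactly the $r$ singular values of $UV^{T}$, all equal to $1$, together with the singular values of $W$, all at most $1$. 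In particular $\Vert G\Vert_2^{2}=r+\Vert W\Vert_2^{2}\ge r$.

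Combining the two steps then finishes the proof: from $D=-\lambda G$ and $\Vert D\Vert_2\le 1$ we get $\lambda\Vert G\Vert_2\le 1$, hence $\lambda^{2}\Vert G\Vert_2^{2}\le 1$, and with $\Vert G\Vert_2^{2}\ge r$ this yields $\lambda^{2}\rank(\hat A)\le 1$, i.e.\ $\rank(\hat A)\le 1/\lambda^{2}$. The main obstacle, and the only genuinely non-routine point, is the spectral bookkeeping in the middle step: one must justify that the orthogonality relations force the spectra of $UV^{T}$ and $W$ to concatenate rather than interact, so that the $r$ unit singular values persist in $G$ and deliver the lower bound $\Vert G\Vert_2^{2}\ge r$. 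The degenerate case $\hat A=\mathbf X$ requires a separate remark, but as noted above it is absorbed by the same bound $\Vert D\Vert_2\le 1$.
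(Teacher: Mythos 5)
Your proof is correct and follows essentially the same route as the paper: write the first-order optimality condition $0\in\partial F(\hat A)$, use Watson's characterization of $\partial\Vert\cdot\Vert_1$ (your form $U^{T}W=0$, $WV=0$ is equivalent to the paper's projector form $P_{S_1^{\bot}}WP_{S_2^{\bot}}$), and take Frobenius norms, with the orthogonality of the two pieces giving $\Vert UV^{T}+W\Vert_2^{2}\geq r$ and the case $\hat A=\mathbf X$ handled via the unit Frobenius ball. The only cosmetic difference is that the paper obtains the lower bound by directly computing $\Vert\cdot\Vert_2^{2}$ of both sides of the stationarity identity rather than isolating the subgradient $G$, but the content is identical.
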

\begin{lemma}\label{l2}
Suppose that  $\dfrac{\rho}{\sqrt{\rank(A_0)}}\geq \lambda \geq 3\Delta$ for some $\rho<1$, then
\begin{equation}\label{2}
\Vert  \hat A-\mathbf X\Vert_2\;\geq \left (\dfrac{3-\sqrt{1+\rho^{2}}}{3+\sqrt{1+\rho^{2}}}\right ) \Vert   A_0-\mathbf X\Vert_2
\end{equation}
\end{lemma}
If $\hat A=\mathbf X$, then \eqref{2} implies that $A_0=\mathbf X$ and we get  $\Vert \hat A -A_0\Vert _2=0$.\\
 When $\hat A\neq\mathbf X$, we will use the fact that the subdifferential of the convex function $A\rightarrow  \Vert A \Vert_1$ is the following set of matrices (cf. \cite{watson})
 \begin{equation}\label{subdiff}
 \partial \Vert A \Vert_1=\left\{ \underset{j=1}{\overset{\rank(A)}{\sum}}u_j(A)v_j^{T}(A)+ \mathcal P_{S_1^{\bot}(A)}W\mathcal P_{S_2^{\bot}(A)}\;:\; \Vert  W\Vert_\infty\leq 1\right\}.
 \end{equation} 
  Here $u_j(A)$ and $v_j(A)$ are respectively the left and right orthonormal singular vectors of $A$, $S_1(A)$ is the linear span of $\{u_j(A)\}$, $S_2(A)$ is the linear span of $\{v_j(A)\}$. For simplicity we will write $u_j$ and $v_j$ instead of $u_j(A)$ and $v_j(A)$.
A necessary condition of extremum in \eqref{estimator} implies that there exists $\hat V\in \partial \Vert  \hat A\Vert_1$ such that for any $A\in \mathbb R^{m_1\times m_2}$
\begin{equation}\label{4}
\begin{split}
\dfrac {2\langle \hat A-\mathbf X,\hat A-A\rangle}{2\Vert \hat A-\mathbf X\Vert_2}+\lambda\langle\hat V,\hat A-A\rangle\leq 0.
\end{split}
\end{equation}

By the monotonicity of subdifferentials of convex functions we have that $\langle\hat V-V,\hat A-A\rangle\geq 0$ where $V\in \partial \Vert  A\Vert_1$. Then 
\eqref{4} and $2\langle \hat A-A_0,\hat A-A\rangle= \Vert \hat A-A_0  \Vert^{2}_2+\Vert \hat A-A  \Vert^{2}_2-\Vert A-A_0  \Vert^{2}_2$ imply
\begin{equation}\label{5}
\begin{split}
&\Vert \hat A-A_0  \Vert^{2}_2+\Vert \hat A-A  \Vert^{2}_2+2\lambda\Vert \hat A-\mathbf X\Vert_2\left \langle \mathcal P_{S_1^{\bot}(A)}W\mathcal P_{S_2^{\bot}(A)},\hat A-A\right \rangle\\
&\hskip 0.5 cm\leq\Vert A-A_0  \Vert^{2}_2+2\langle \mathbf X-A_0,\hat A-A\rangle-2\lambda\Vert \hat A-\mathbf X\Vert_2\left \langle \underset{j=1}{\overset{r}{\sum}}u_jv_j^{T},\hat A-A\right\rangle.
\end{split}
\end{equation}
 
 For $B$, a $m_1\times m_2$ matrix,  let $\mathbf {Pr}_A(B)=B- \mathcal P_{S_1^{\bot}(A)}B\mathcal P_{S_2^{\bot}(A)}$. Since 
\begin{equation*}
\mathbf {Pr}_A(B)=\mathcal P_{S_1^{\bot}(A)}B\mathcal P_{S_2(A)}+ \mathcal P_{S_1(A)}B
\end{equation*} 
and $\rank(\mathcal P_{S_i(A)}B)\leq \rank (A)$ we have that $\rank(\mathbf {Pr}_A(B))\leq 2\rank(A)$.
 
Now, we consider each term in \eqref{5} separately. First, using the trace duality and triangle inequality, we get
\begin{equation}\label{7}
\begin{split}
\langle \mathbf X-A_0,\hat A-A\rangle&\leq  \Vert  \mathbf{X}-A_0 \Vert_\infty \Vert \hat A-A \Vert_1\\&\leq
\Vert  \mathbf{X}-A_0 \Vert_\infty \left \Vert\mathbf {Pr}_A\left (\hat A-A \right )\right \Vert_1\\&\hskip 0.5 cm+\Vert  \mathbf{X}-A_0 \Vert_\infty\left  \Vert \mathcal P_{S_1^{\bot}(A)}\left (\hat A-A \right )\mathcal P_{S_2^{\bot}(A)}\right \Vert_1.
\end{split}
\end{equation}
Note that $ \left \Vert \underset{j=1}{\overset{r}{\sum}}u_jv_j^{T}\right  \Vert_\infty=1$. Then, the trace duality implies
\begin{equation}\label{8}
\left \langle \underset{j=1}{\overset{r}{\sum}}u_jv_j^{T},\hat A-A\right \rangle=\left \langle \underset{j=1}{\overset{r}{\sum}}u_jv_j^{T},\mathbf {Pr}_A\left (\hat A-A\right )\right \rangle\leq \left \Vert\mathbf {Pr}_A\left (\hat A-A \right )\right \Vert_1.
\end{equation}
From the trace duality, we get that, there
 exists $W$ with $ \Vert  W\Vert_\infty\leq 1$ such that
 \begin{equation}\label{6}
 \begin{split}
 \left \langle \mathcal P_{S_1^{\bot}(A)}W\mathcal P_{S_2^{\bot}(A)},\hat A-A\right \rangle&=\left\langle W,\mathcal P_{S_1^{\bot}(A)}\left (\hat A-A\right )\mathcal P_{S_2^{\bot}(A)}\right\rangle\\&= \left\Vert \mathcal P_{S_1^{\bot}(A)}\left (\hat A-A\right )\mathcal P_{S_2^{\bot}(A)} \right \Vert_1.
 \end{split}\end{equation}
Using \eqref{2} and the definition of $\lambda$ we derive
\begin{equation}\label{10}
\begin{split}
\lambda\Vert \hat A-\mathbf X\Vert_2\left \Vert \mathcal P_{S_1^{\bot}(A)}\;\hat A\;\mathcal P_{S_2^{\bot}(A)}\right  \Vert_1&\geq \lambda\dfrac{3-\sqrt{1+\rho^{2}}}{3+\sqrt{1+\rho^{2}}} \Vert   A_0-\mathbf X\Vert_2\left \Vert \mathcal P_{S_1^{\bot}(A)}\;\hat A\;\mathcal P_{S_2^{\bot}(A)} \right \Vert_1\\&\geq 3\dfrac{3-\sqrt{1+\rho^{2}}}{3+\sqrt{1+\rho^{2}}}\Vert   A_0-\mathbf X\Vert_\infty \Vert \mathcal P_{S_1^{\bot}(A)}\;\hat A\;\mathcal P_{S_2^{\bot}(A)} \Vert_1.
\end{split}
\end{equation}
Note that $6\dfrac{3-\sqrt{1+\rho^{2}}}{3+\sqrt{1+\rho^{2}}}\geq 2$ for any $\rho<1$. Thus, putting  \eqref{7}, \eqref{8} and \eqref{10} into \eqref{5} yield
\begin{equation}\label{11}
\begin{split}
\Vert \hat A-A_0  \Vert^{2}_2+\Vert \hat A-A  \Vert^{2}_2
&\leq\Vert A-A_0  \Vert^{2}_2+2\Vert  \mathbf{X}-A_0 \Vert_\infty \Vert\mathbf {Pr}_A\left (\hat A-A \right )\Vert_1
\\&\hskip 0.5 cm+2\lambda\Vert \hat A-\mathbf X\Vert_2\Vert\mathbf {Pr}_A\left (\hat A-A \right )\Vert_1.
\end{split}
\end{equation}
Now, using the triangle inequality and the fact that  $$\left \Vert\mathbf {Pr}_A\left (\hat A-A \right )\right \Vert_1\leq \sqrt{2\rank(A)}\Vert \hat A-A \Vert_2$$ we get 
\begin{equation}\label{12}
\begin{split}
&2\left \Vert  \mathbf{X}-A_0 \right \Vert_\infty \left \Vert\mathbf {Pr}_A\left (\hat A-A \right )\right \Vert_1
+2\lambda\left \Vert \hat A-\mathbf X\right \Vert_2\left \Vert\mathbf {Pr}_A\left (\hat A-A \right )\right \Vert_1\\&\hskip 0.5 cm\leq 2\Big (\left \Vert  \mathbf{X}-A_0 \right \Vert_\infty+\lambda\left \Vert  \mathbf{X}-A_0 \right \Vert_2\Big ) \sqrt{2\rank(A)}\left \Vert \hat A-A\right  \Vert_2
\\&\hskip 2.5 cm+2\lambda\Vert \hat A-A_0\Vert_2\sqrt{2\rank(A)}\Vert \hat A-A \Vert_2.
\end{split} 
\end{equation}
From the definition of $\lambda$ we get that $\left \Vert  \mathbf{X}-A_0 \right \Vert_\infty\leq \lambda\left \Vert  \mathbf{X}-A_0 \right \Vert_2/3$. For $A$ such that $\lambda \sqrt{2\rank(A)}\leq \rho$, \eqref{12} implies
\begin{equation*}
\begin{split}
\Vert \hat A-A_0  \Vert^{2}_2+\Vert \hat A-A  \Vert^{2}_2
&\leq\Vert A-A_0  \Vert^{2}_2+\dfrac{8}{3}\lambda\Vert  \mathbf{X}-A_0 \Vert_2\sqrt{2\rank(A)}\Vert \hat A-A \Vert_2
\\&\hskip 1 cm+2\rho\Vert \hat A-A_0\Vert_2\Vert \hat A-A \Vert_2.
\end{split} 
\end{equation*}
Using $2ab\leq a^{2}+b^{2}$ twice we finally compute 
\begin{equation*}
\begin{split}
(1-\rho)\Vert \hat A-A_0  \Vert^{2}_2
\leq\Vert A-A_0  \Vert^{2}_2+\dfrac{4}{1-\rho}\lambda^{2}\Vert  \mathbf{X}-A_0 \Vert_2^{2}\rank(A)
\end{split} 
\end{equation*}
which implies the statement of Theorem \ref{thm1}.

 \section{Proof of Lemma \ref{l1}}\label{proof_lemma_1}
 That $\hat A$ is the minimum of \eqref{estimator} implies that $0\in \partial F(\hat A)$.
 For $\hat A\neq \mathbf X$, \eqref{subdiff} implies that there exists a matrix $W$ such that $\Vert  W\Vert_\infty\leq 1$ and 
  \begin{equation}\label{1}
  \dfrac{\hat A-\mathbf X}{ \Vert  \hat A-\mathbf X\Vert_2}=-\lambda \underset{j=1}{\overset{\rank(\hat A)}{\sum}}u_j(\hat A)v_j^{T}(\hat A)-\lambda \mathcal  P_{S_1^{\bot} (\hat A)}W\mathcal P_{S_2^{\bot}(\hat A)}.
  \end{equation}
 Calculating the $ \Vert \cdot \Vert_2^{2}$ norm of both sides of \eqref{1} we get that $1\geq \lambda^{2}\rank(\hat A)$.

 When $\hat A = \mathbf X$, instead of the differential of $\Vert  \hat A-\mathbf X\Vert_2$ we use its subdifferential: in \eqref{1} the term $ \dfrac{\hat A-\mathbf X}{ \Vert  \hat A-\mathbf X\Vert_2}$ is replaced by  a matrix $\tilde{W}$ such that $ \Vert  \tilde{W}\Vert_2\leq 1$ and we get again $1\geq \lambda^{2}\rank(\hat A)$.

\section{Proof of Lemma \ref{l2}}

If $ A_0= \mathbf X$, then, we have trivially $\Vert  \hat A-\mathbf X\Vert_2\;\geq 0$.

If $ A_0\neq \mathbf X$, by the convexity of the function $A\rightarrow  \Vert   A-\mathbf X\Vert_2$, we have
\begin{equation}\label{3}
\begin{split}
\Vert  \hat A-\mathbf X\Vert_2-\Vert  A_0-\mathbf X\Vert_2&\geq \dfrac{\langle A_0-\mathbf X,\hat A-A_0\rangle}{\Vert  A_0-\mathbf X\Vert_2}\\&\geq -\dfrac{\Vert  A_0-\mathbf X\Vert_\infty}{\Vert   A_0-\mathbf X\Vert_2}\Vert  \hat A-A_0\Vert_1\\
&\geq-\dfrac{\Vert  A_0-\mathbf X\Vert_\infty}{\Vert   A_0-\mathbf X\Vert_2}\sqrt{\rank(\hat A)+\rank(A_0)}\Vert  \hat A-A_0\Vert_2.
\end{split}
\end{equation} 
Using Lemma \ref{l1}, the bound $\dfrac{\rho}{\sqrt{\rank(A_0)}}\geq \lambda$ and the triangle inequality, from \eqref{3} we get
\begin{equation}
\begin{split}
\Vert  \hat A-\mathbf X\Vert_2-\Vert  A_0-\mathbf X\Vert_2
&\geq\\&\hskip -0.5 cm-\dfrac{\sqrt{1+\rho^{2}}}{\lambda}\dfrac{\Vert  A_0-\mathbf X\Vert_\infty}{\Vert   A_0-\mathbf X\Vert_2}\left (\Vert  \hat A-\mathbf X\Vert_2+\Vert  A_0-\mathbf X\Vert_2\right ).
\end{split}
\end{equation} 
Note that $\dfrac{\Vert  A_0-\mathbf X\Vert_\infty}{\lambda\Vert   A_0-\mathbf X\Vert_2}\leq 1/3$ which finally leads to
\begin{equation*}
\left (1+\dfrac{\sqrt{1+\rho^{2}}}{3}\right ) \Vert  \hat A-\mathbf X\Vert_2\;\geq \left (1-\dfrac{\sqrt{1+\rho^{2}}}{3}\right ) \Vert   A_0-\mathbf X\Vert_2.
\end{equation*}
This completes the proof of Lemma \ref{l2}.
\section{Proof of Lemma \ref{l3}}\label{proof_lemma3}
Our goal is to get a numerical estimation on $c_*$ in the case of Gaussian noise. 
Let $Z_i=\xi_i\left (X_i-\bE X_i\right )$ and 
\begin{equation*}
\sigma_Z=\max\left \{\left \Vert \dfrac{1}{n}\Sum \bE\left (Z_iZ^{T}_i\right )\right \Vert^{1/2}_\infty, \left \Vert \dfrac{1}{n}\Sum \bE\left (Z_i^{^{T}}Z_i\right )\right \Vert^{1/2}_\infty\right \}=\dfrac{1}{m_1\wedge m_2}.
\end{equation*}
The constant $c_*$ comes up in the proof of Lemma 2 in \cite{Koltchinskii-Tsybakov} in the estimation of 
\begin{equation*}
\Delta_1=\left \Vert \dfrac{1}{n}\Sum \xi_iX_i\right \Vert_\infty\leq \left \Vert \dfrac{1}{n}\Sum \xi_i\left (X_i-\bE X_i\right )\right \Vert_\infty +\dfrac{1}{\sqrt{m_1m_2}}\left \vert \dfrac{1}{n}\Sum \xi_i\right \vert.
\end{equation*}
A standard application of Markov's inequality gives that, with probability at least $1-1/m$
\begin{equation}\label{B1}
\dfrac{1}{\sqrt{m_1m_2}}\left \vert \dfrac{1}{n}\Sum \xi_i\right \vert\leq 2\sqrt{\dfrac{\log m}{nm_1m_2}}.
\end{equation}
In \cite{Koltchinskii-Tsybakov}, the authors estimate $\left \Vert \dfrac{1}{n}\Sum \xi_i\left (X_i-\bE X_i\right )\right \Vert_\infty$ using \cite [Proposition 2]{koltchinskii-von}. To get a numerical estimation on $c_*$ we follow the lines of the proof of \cite [Proposition 2]{koltchinskii-von}. In order to simplify notations, we write $\Vert\;\Vert_\infty=\Vert\;\Vert$ and we consider the case of Hermitian matrices of size $m'$. Its extension to rectangular matrices is straightforward via self-adjoint dilation, cf., for example, 2.6 in \cite{tropp-user}.

Let $Y_n=\Sum Z_i$. In the proof of \cite [Proposition 2]{koltchinskii-von}, after following the standard derivation of the classical Bernstein inequality and using the Golden-Thompson inequality, the author derives the following bound
\begin{equation}\label{B0}
\mathbb{P}\left (\Vert Y_n\Vert\geq t\right )\leq 2m'e^{-\lambda t}\Vert \bE e^{\lambda Z_1}\Vert^{n}
\end{equation}
and
\begin{equation}\label{B2}
\Vert \bE e^{\lambda Z_1}\Vert\leq 1+\lambda^{2}\left \Vert \bE Z_1^{2}\left [\dfrac{e^{\lambda\Vert Z_1\Vert}-1-\lambda\Vert Z_1\Vert}{\lambda^{2}\Vert Z_1\Vert^{2}}\right ]\right \Vert.
\end{equation}
Using that $\Vert Z_1\Vert\leq 2\vert \xi_i\vert$, from \eqref{B2}, we compute 
\begin{equation}\label{B3}
\begin{split}
\left \Vert \bE e^{\lambda Z_1}\right \Vert&\leq 1+\lambda^{2}\left \Vert \bE\left [\left (X_i-\bE X_i\right )^{2}\right ]\bE \left (\xi_i^{2}\left [\dfrac{e^{2\lambda\vert \xi_i\vert}-1-2\lambda\vert \xi_i\vert}{4\lambda^{2}\xi_i^{2}}\right ]\right )\right \Vert\\&\hskip 0.5 cm \leq 1+\lambda^{2}\sigma_Z^{2}\bE\left (\dfrac{(2\vert \xi_i\vert)^{2}}{2!}+\dfrac{\lambda(2\vert \xi_i\vert)^{3}}{3!}+\cdots\right ).
\end{split}
\end{equation}
Assume that $\lambda <1$, then \eqref{B3} implies
\begin{equation*}
\begin{split}
\left \Vert \bE e^{\lambda Z_1}\right \Vert\leq 1+\lambda^{2}\sigma_Z^{2}\bE e^{2\vert \xi_i\vert}\leq 1+2\lambda^{2}\sigma_Z^{2}e^{2}\leq \exp\{2\lambda^{2}\sigma_Z^{2}e^{2}\}.
\end{split}
\end{equation*}
Using this bound, from \eqref{B0} we get
\begin{equation*}
\mathbb{P}\left (\Vert Y_n\Vert\geq t\right )\leq 2m'\exp\{-\lambda t+2\lambda^{2}\sigma_Z^{2}e^{2}\}.
\end{equation*}
It remains now to minimize the last bound with respect to $\lambda\in(0,1)$ to obtain that
\begin{equation*}
\mathbb{P}\left (\Vert Y_n\Vert\geq t\right )\leq 2m'\exp\left \{-\dfrac{t^{2}}{4e^{2}\sigma_Z^{2}n}\right \}
\end{equation*}
where we supposed that $n$ is large enough. 

Putting $2m'\exp\left \{-\dfrac{t^{2}}{4\sigma_Z^{2}e^{2}n}\right \}=1/(2m')$, we get $t=2e\sqrt{\dfrac{2\log(2m')n}{m_1\wedge m_2}}$. Using \eqref{B1} we compute the following bound on $c_*$
\begin{equation*}
c_*\leq 2e+1\leq 6.5.
\end{equation*}
This completes the proof of Lemma \ref{l3}. 
\section{Proof of Lemma \ref{Lemma}}\label{proof_Lemma}
 Let $\epsilon_i=\sigma \xi_i$. To prove (i) we compute
 \begin{equation}\label{14}
 \begin{split}
 \left\langle\mathbf M,\mathbf M\right\rangle&=\dfrac{ \left\Vert A_0\right\Vert_2^{2}}{\left (m_1m_2\right )^{2}}+\left (1-\dfrac{2n}{m_1m_2}\right )\underset{\mathbf {I}}{\underbrace{\dfrac{1}{n^{2}}\Sum \left\langle A_0,X_i\right\rangle^{2}}} + \underset{\mathbf {II}}{\underbrace{\dfrac{1}{n^{2}}\Sum\epsilon_i^{2}}}\\&\hskip 0.5 cm+\left (1-\dfrac{n}{m_1m_2}\right )\underset{\mathbf{ III}}{\underbrace{\dfrac{2}{n^{2}}\Sum\left\langle A_0,X_i\right\rangle\epsilon_i}}+\underset{\mathbf {IV}}{\underbrace{\dfrac{4}{n^{2}}\underset{i<j}{\sum}\epsilon_i\left\langle A_0,X_j\right\rangle\left\langle X_i,X_j\right\rangle}}\\&\hskip 1 cm+\underset{\mathbf V}{\underbrace{\dfrac{2}{n^{2}}\underset{i<j}{\sum}\epsilon_i \epsilon_j\left\langle X_i,X_j\right\rangle}}+\underset{\mathbf {VI}}{\underbrace{\dfrac{1}{n^{2}}\underset{i\neq j}{\sum} \left\langle A_0,X_i\right\rangle\left\langle A_0,X_j\right\rangle\left\langle X_j,X_i\right\rangle}}.
 \end{split}\end{equation}
 
We estimate each term in \eqref{14} separately with a good probability. 
\begin{itemize}
\item [$\mathbf I$ :] We have that $\bE\left (\dfrac{1}{n^{2}}\Sum \left\langle A_0,X_i\right\rangle^{2}\right )=\dfrac{\left\Vert A_0\right\Vert^{2}_2}{nm_1m_2} $ and $\left |\left\langle A_0,X_i\right\rangle\right |\leq a$. \\Using Hoeffding's inequality 
, we get that, with probability at least \\$1-2\exp\left \{-2\sigma^{4}n/(8a)^{2}\right \}$
$$\dfrac{\left\Vert A_0\right\Vert^2_2}{nm_1m_2} +\dfrac{\sigma^2}{8\,n}\geq\dfrac{1}{n^{2}}\Sum \left\langle A_0,X_i\right\rangle^{2}\geq \dfrac{\left\Vert A_0\right\Vert^2_2}{nm_1m_2} -\dfrac{\sigma^2}{8\,n}.$$

\item [$\mathbf{II}$:] $\epsilon_i^{2}$ are sub-exponential random variables and $\bE\left (\dfrac{1}{n^{2}}\Sum\epsilon_i^{2}\right )=\dfrac{\sigma^{2}}{n}$. Using Bernstein inequality for sub-exponentials random variables (cf. \cite[Proposition 16]{vershynin} ) we get that, with probability at least \\$1-2\exp\left \{-cn\min\left [\sigma^{2}K/8^{2},\sigma\sqrt{K}/8\right ]\right \}$ 
$$\dfrac{\sigma^{2}}{n}+\dfrac{\sigma^{2}}{8n}\geq\dfrac{1}{n^{2}}\Sum\epsilon_i^{2}\geq \dfrac{\sigma^{2}}{n}-\dfrac{\sigma^{2}}{8n}.$$

\item [$\mathbf{III}$:] We have that $\bE\left (\dfrac{2}{n^{2}}\Sum\left\langle A_0,X_i\right\rangle\epsilon_i\right )=0$, using Hoeffding's type inequality for sub-Gaussian random variables (cf. \cite[Proposition 10]{vershynin}) we get
that, with probability at least $1-e\exp\left \{-c\sigma^{2}Kn/a^{2}\right \}$ 
\begin{equation*}
\dfrac{\sigma^{2}}{8n}\geq\dfrac{2}{n^{2}}\Sum\left\langle A_0,X_i\right\rangle\epsilon_i\geq -\dfrac{\sigma^{2}}{8n}.
\end{equation*}

\item [$\mathbf{IV}$:] We compute $\bE\left (\dfrac{4}{n^{2}}\underset{i<j}{\sum}\epsilon_i\left\langle A_0,X_j\right\rangle\left\langle X_i,X_j\right\rangle\right )=0$. We use the following lemma which is proven in the Appendix \ref{proof_l4}.

\begin{lemma}\label{l4}
Suppose that $n\leq m_1m_2$. With probability at least\\ $1-\dfrac{2}{m_1m_2}$  $$\underset{i<j}{\sum}\left\langle X_i,X_j\right\rangle\leq n.$$
\end{lemma} 
Lemma \ref{l4} and Hoeffding's type inequality imply that, with probability at least $1-2/m_1m_2 -e\exp\left \{-c\sigma^{2}nK/a^{2}\right \}$ 
\begin{equation*}
\dfrac{\sigma^{2}}{8n}\geq\dfrac{4}{n^{2}}\underset{i<j}{\sum}\epsilon_i\left\langle A_0,X_j\right\rangle\left\langle X_i,X_j\right\rangle\geq- \dfrac{\sigma^{2}}{8n}
\end{equation*} 

\item [$\mathbf{V}$:] We have that $\bE\left (\dfrac{2}{n^{2}}\underset{i<j}{\sum}\epsilon_i \epsilon_j\left\langle X_i,X_j\right\rangle\right )=0$. Using Bernstein inequality for sub-exponentials random variables (cf. \cite[Proposition 16]{vershynin} ) and Lemma \ref{l4} we get that, with probability at least $1-2\exp\left \{-cn\min\left [\sigma^{2}K/8^{2},\sigma\sqrt{K}/8\right ]\right \}$ 
\begin{equation*}
\dfrac{\sigma^{2}}{8n}\geq\dfrac{2}{n^{2}}\underset{i<j}{\sum}\epsilon_i \epsilon_j\left\langle X_i,X_j\right\rangle\geq -\dfrac{\sigma^{2}}{8n}.
\end{equation*}  

\item [$\mathbf{VI}$:]  We compute that 
\begin{equation*}
\begin{split}
\bE\left (\dfrac{1}{n^{2}}\underset{i\neq j}{\sum} \left\langle A_0,X_i\right\rangle\left\langle A_0,X_j\right\rangle\left\langle X_j,X_i\right\rangle\right )&=\dfrac{1}{n^{2}}\underset{i\neq j}{\sum} \left\langle \bE\left (\left\langle A_0,X_j\right\rangle X_j\right ),\bE \left (\left\langle A_0,X_i\right\rangle X_i\right )\right\rangle 
 \\&=\dfrac{1}{n^{2}}\underset{i\neq j}{\sum} \dfrac{\left\Vert A_0\right\Vert^{2}_2}{\left  (m_1m_2\right  )^{2}} \\&\leq\dfrac{\left\Vert A_0\right\Vert^{2}_2}{\left  (m_1m_2\right  )^{2}}.
\end{split}
\end{equation*}
 Using Lemma \ref{l4} and Hoeffding's type inequality for sub-Gaussian random variables (cf. \cite[Proposition 10]{vershynin}), we get that, with probability at least $1-2/m_1m_2-2\exp\left \{-2\sigma^{4}n/(8a)^{2}\right \}$
\begin{equation*}
\dfrac{1}{n^{2}}\underset{i\neq j}{\sum} \left\langle A_0,X_i\right\rangle\left\langle A_0,X_j\right\rangle\left\langle X_j,X_i\right\rangle\leq \dfrac{\left\Vert A_0\right\Vert^{2}_2}{(m_1m_2)^{2}}+\dfrac{\sigma^{2}}{8n}.
\end{equation*}

\end{itemize}
To obtain the lower bound, note that, for $i\neq j$, $\left\langle X_i,X_j\right\rangle\neq 0$ iff $X_i=X_j$. This implies that $\underset{i\neq j}{\sum} \left\langle A_0,X_i\right\rangle\left\langle A_0,X_j\right\rangle\left\langle X_j,X_i\right\rangle\geq 0$. We use that $2n<m_1m_2$ to get $$\dfrac{ \left\Vert A_0\right\Vert_2^{2}}{\left (m_1m_2\right )^{2}}+\left (1-\dfrac{2n}{m_1m_2}\right )\dfrac{1}{n^{2}}\Sum \left\langle A_0,X_i\right\rangle^{2}\geq 0.$$ Putting the lower bounds in $\mathbf{II}-\mathbf{V}$ together we compute from \eqref{14}
  \begin{equation*}
   \left\Vert\mathbf M\right\Vert^{2}_2\geq
  \dfrac{\sigma^2}{2\,n}. 
  \end{equation*}
 
 To obtain the upper bound, we use the upper bounds in $\mathbf{I}-\mathbf{VI}$. From \eqref{14} we get
 \begin{equation*}
    \left\Vert\mathbf M\right\Vert^{2}_2\leq \dfrac{2\left\Vert A_0\right\Vert^2_2}{\left (m_1m_2\right )^{2}}+
   \dfrac{\left\Vert A_0\right\Vert^2_2}{nm_1m_2} +\dfrac{14\sigma^2}{8\,n}\leq 2\left  (\dfrac{\left\Vert A_0\right\Vert^2_2}{nm_1m_2} +\dfrac{\sigma^2}{\,n}\right )
   \end{equation*}
   where we used that $2n\leq m_1m_2$. This completes the proof of part (i) in Lemma \ref{Lemma}.
   
   To prove (ii) we use
     that $\left\langle X_i,X_i\right\rangle=1$ and $\left\langle X_i,X_j\right\rangle\neq 0$  iff $X_i=X_j$. We compute
   \begin{equation*}
   \begin{split}
   \dfrac{1}{n^{2}} \left\langle\Sum Y_i X_i,\Sum Y_i X_i\right\rangle&=\dfrac{1}{n^{2}}\Sum Y_i^{2}+\dfrac{2}{n^{2}}\underset{i<j}{\sum}Y_i Y_j\left\langle X_i,X_j\right\rangle\\&=\dfrac{1}{n^{2}}\Sum \left (\left\langle A_0,X_i\right\rangle^{2}+\epsilon_i^{2}+2\left\langle A_0,X_i\right\rangle\epsilon_i\right )\\&\hskip 0.5 cm+\dfrac{2}{n^{2}}\underset{i<j}{\sum}\left\langle A_0,X_i\right\rangle^{2}\left\langle X_i,X_j\right\rangle\\&\hskip 0.75
   cm+\dfrac{4}{n^{2}}\underset{i<j}{\sum}\epsilon_i\left\langle A_0,X_j\right\rangle\left\langle X_i,X_j\right\rangle +\dfrac{2}{n^{2}}\underset{i<j}{\sum}\epsilon_i \epsilon_j\left\langle X_i,X_j\right\rangle.
   \end{split}\end{equation*}
   This implies that
   \begin{equation}\label{15}
   \begin{split}
   \dfrac{1}{n^{2}} \left\langle\Sum Y_i X_i,\Sum Y_i X_i\right\rangle&\geq \underset{\mathbf I}{\underbrace{\dfrac{1}{n^{2}}\Sum \left\langle A_0,X_i\right\rangle^{2}}}+\underset{\mathbf {II}}{\underbrace{\dfrac{1}{n^{2}}\Sum\epsilon_i^{2}}}+\underset{\mathbf{ III}}{\underbrace{\dfrac{2}{n^{2}}\Sum\left\langle A_0,X_i\right\rangle\epsilon_i}}\\&\hskip 0.5  cm +\underset{\mathbf {IV}}{\underbrace{\dfrac{4}{n^{2}}\underset{i<j}{\sum}\epsilon_i\left\langle A_0,X_j\right\rangle\left\langle X_i,X_j\right\rangle}}+\underset{\mathbf V}{\underbrace{\dfrac{2}{n^{2}}\underset{i<j}{\sum}\epsilon_i \epsilon_j\left\langle X_i,X_j\right\rangle}}.
   \end{split}\end{equation}
   Using the lower bounds for $\mathbf{I} - \mathbf{V}$ we get from \eqref{15}
   \begin{equation*}
   \dfrac{1}{n^{2}} \left\langle\Sum Y_i X_i,\Sum Y_i X_i\right\rangle\geq \dfrac{ \left\Vert A_0\right\Vert_2^{2}}{\,nm_1m_2}
   \end{equation*}
   which proves the part (ii) of Lemma \ref{Lemma}.

   (iii) is a consequence of (ii). For $4n\leq m_1m_2$ (ii) implies 
   \begin{equation*}
      \dfrac{1}{4n^{2}} \left\langle\Sum Y_i X_i,\Sum Y_i X_i\right\rangle
      \geq \dfrac{ \left\Vert A_0\right\Vert_2^{2}}{\left (m_1m_2\right )^{2}}.
      \end{equation*}
  Now we complete the proof of part (iii) of Lemma \ref{Lemma} using that $$\left\Vert\mathbf M\right\Vert_2\geq  \left\Vert \dfrac{1}{n}\Sum Y_iX_i\right\Vert_2-\dfrac{\left\Vert A_0\right\Vert_2}{m_1m_2}.$$
  
\section{Proof of Lemma \ref{l4}}\label{proof_l4}
Recall that for $i\neq j$, $X_i$ and $X_j$ are independent. We compute the expectation
\begin{equation*}
\bE\left (\underset{i<j}{\sum}\left\langle X_i,X_j\right\rangle\right )=\underset{i<j}{\sum}\left\langle \bE X_i,\bE X_j\right\rangle=\dfrac{n(n-1)}{2m_1m_2}
\end{equation*}
and the variance
\begin{equation*}
\begin{split}
\bE\left (\left (\underset{i<j}{\sum}\left\langle X_i,X_j\right\rangle\right )^{2}\right )-\left (\bE\left (\underset{i<j}{\sum}\left\langle X_i,X_j\right\rangle\right )\right) ^{2}&\\&\hskip -6 cm =\bE\left (\underset{\underset{i'<j'}{i<j}}{\sum}\left\langle  X_i, X_j\right\rangle\left\langle  X_{i'}, X_{j'}\right\rangle\right )-
\underset{\underset{i'<j'}{i<j}}{\sum}\bE\left (\left\langle  X_i, X_j\right\rangle\right )\bE\left (\left\langle  X_{i'}, X_{j'}\right\rangle\right ).
\end{split}
\end{equation*}
When $i,j,i',j'$ are all distinct, $\bE\left ( \left\langle  X_i, X_j\right\rangle\left\langle  X_{i'}, X_{j'}\right\rangle\right )$ is cancelled by the corresponding term in $\underset{\underset{i'<j'}{i<j}}{\sum}\bE\left (\left\langle  X_i,X_j\right\rangle\right )\bE\left (\left\langle X_{i'}, X_{j'}\right\rangle\right )$. 

It remains to consider the following five cases: (1) $i=i'$ and $j=j'$; (2) $i=i'$ and $j\neq j'$; (3) $i\neq i'$ and $j= j'$; (4) $i=j'$ and $j\neq i'$; (5) $i'=j$ and $j'\neq i$.

\textbf{Case (1):} note that $\left \langle  X_i, X_j\right\rangle$ takes only two values $0$ or $1$, which implies that
\begin{equation*}
\bE \left (\left \langle  X_i, X_j\right\rangle^{2}\right )=\bE \left (\left \langle  X_i, X_j\right\rangle\right )=\dfrac{1}{m_1m_2}.
\end{equation*} 
\textbf{Cases (2)-(5):} in these four cases, we need to calculate $\bE \left (\left \langle  X_i, X_k\right\rangle\left \langle  X_k, X_j\right\rangle\right )$ for $i\neq j$ and $k\notin \{i,j\}$. Note that $\mathcal{P}_{X_k}=\left \langle  \,\cdot\,, X_k\right\rangle X_k$ is the orthogonal projector on the vector space spanned by $X_k$. We compute $$\bE \mathcal{P}_{X_k}=\dfrac{1}{m_1m_2}\mathrm{Id}$$ where $\mathrm {Id}$ is the identity application on $\mathbb{R}^{m_1\times m_2}$. Then, we get 
\begin{equation*}
\begin{split}
\bE \left (\left \langle \left \langle  X_i, X_k\right\rangle X_k, X_j\right\rangle\right ) &=\bE \left (\left\langle \mathcal{P}_{X_k}\left (X_i\right ), X_j\right\rangle\right )
\\&=\left  \langle \bE\left (\mathcal{P}_{X_k}\right ) \left (\bE X_i\right ),\bE X_j\right\rangle
\\&=\dfrac{1}{m_1m_2}\left\langle \bE X_i,\bE X_j\right\rangle\\&=\dfrac{1}{\left (m_1m_2\right )^{2}}.
\end{split}\end{equation*}
These terms are cancelled by the corresponding terms in $\underset{\underset{i'<j'}{i<j}}{\sum}\bE\left (\left\langle  X_i, X_j\right\rangle\right )\bE\left (\left\langle  X_{i'},X_{j'}\right\rangle\right )$ as $$\bE\left (\left \langle  X_i, X_k\right\rangle\right ) \bE\left (\left \langle  X_k, X_j\right\rangle\right )=\dfrac{1}{(m_1m_2)^{2}}.$$

Finally we get that
\begin{equation*}
\bE\left (\left (\underset{i<j}{\sum}\left\langle X_i,X_j\right\rangle\right )^{2}\right )-\left (\bE\left (\underset{i<j}{\sum}\left\langle X_i,X_j\right\rangle\right )\right) ^{2}\leq \dfrac{n(n-1)}{2m_1m_2}.
\end{equation*}
The Bienaym\'e-Tchebychev inequality implies that
\begin{equation*}
\mathbb{P}\left (\underset{i<j}{\sum}\left\langle X_i,X_j\right\rangle\geq n\right )\leq \dfrac{n(n-1)}{2m_1m_2\left (n-\dfrac{n(n-1)}{2m_1m_2}\right )^{2}}\leq \dfrac{2}{m_1m_2}
\end{equation*}
when $m_1m_2\geq n$. This completes the proof of Lemma \ref{l4}.
\section{Proof of Theorem \ref{thmr1}}\label{proof_thmr1}
The following lemma is the counterpart of Lemma \ref{l1} in the present setting. It is proven in the Appendix \ref{proof_lr1}. 
     \begin{lemma}\label{lr1}
     \begin{equation*}
     \rank(V\hat A)\leq 1/\lambda^{2}.
     \end{equation*}
     \end{lemma}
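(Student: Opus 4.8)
The plan is to imitate the proof of Lemma \ref{l1}, inserting the projector $\mathcal{P}_V$ at the appropriate place. Write $\hat B = V\hat A$ and let $f(B) = \Vert U-B\Vert_2 + \lambda\Vert B\Vert_1$, so that $\hat A$ minimizes $A\mapsto f(VA)$. By the chain rule for the composition with the linear map $A\mapsto VA$, one has $\partial\bigl[f(V\,\cdot)\bigr](\hat A) = V^T\partial f(\hat B)$, so the stationarity condition $0\in\partial\bigl[f(V\,\cdot)\bigr](\hat A)$ produces a subgradient $g\in\partial f(\hat B)$ with $V^T g = 0$. Since $V^T\mathcal{P}_V^{\bot}=0$ and $\ker V^T=(\mathrm{col}\,V)^{\bot}$, the condition $V^T g=0$ is \emph{equivalent} to $\mathcal{P}_V g=0$. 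Assuming first $\hat B\neq U$, the sum rule gives $g=\dfrac{\hat B-U}{\Vert \hat B-U\Vert_2}+\lambda\hat W$ for some $\hat W\in\partial\Vert \hat B\Vert_1$ described by \eqref{subdiff}, whence
\[
\mathcal{P}_V\left(\dfrac{\hat B-U}{\Vert \hat B-U\Vert_2}+\lambda\hat W\right)=0 .
\]

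First I would record the structural fact that the columns of $\hat B=V\hat A$ lie in $\mathrm{col}\,V$; hence the left singular vectors $u_j(\hat B)$ span a subspace $S_1(\hat B)\subseteq\mathrm{col}\,V$, so $\mathcal{P}_V$ fixes the sign part: $\mathcal{P}_V\sum_{j=1}^{\rank\hat B}u_j(\hat B)v_j^T(\hat B)=\sum_j u_j(\hat B)v_j^T(\hat B)$. For the remaining term $P_{S_1^{\bot}(\hat B)}WP_{S_2^{\bot}(\hat B)}$ of \eqref{subdiff}, since $S_1(\hat B)\subseteq\mathrm{col}\,V$ the operator $\mathcal{P}_V P_{S_1^{\bot}(\hat B)}$ is the orthogonal projector onto $S_1^{\bot}(\hat B)\cap\mathrm{col}\,V$, a subspace orthogonal to $S_1(\hat B)$. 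The two pieces of $\mathcal{P}_V\hat W$ therefore have column spaces in orthogonal subspaces, and a trace-duality computation identical to the one in Lemma \ref{l1} shows that they are Frobenius-orthogonal, giving
\[
\Vert \mathcal{P}_V\hat W\Vert_2^2=\rank(\hat B)+\bigl\Vert \mathcal{P}_V P_{S_1^{\bot}(\hat B)}WP_{S_2^{\bot}(\hat B)}\bigr\Vert_2^2\;\geq\;\rank(\hat B).
\]

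I would then take Frobenius norms in the optimality identity. Because an orthogonal projector is a contraction, $\Vert \mathcal{P}_V(\hat B-U)\Vert_2\leq\Vert \hat B-U\Vert_2$, so
\[
\lambda^2\Vert \mathcal{P}_V\hat W\Vert_2^2=\dfrac{\Vert \mathcal{P}_V(\hat B-U)\Vert_2^2}{\Vert \hat B-U\Vert_2^2}\leq 1,
\]
and combining with the previous lower bound yields $\lambda^2\rank(V\hat A)\leq 1$. The degenerate case $\hat B=U$ is treated as in Lemma \ref{l1}: one replaces $\dfrac{\hat B-U}{\Vert \hat B-U\Vert_2}$ by a matrix $\tilde W$ with $\Vert \tilde W\Vert_2\leq 1$ drawn from the subdifferential of $\Vert \cdot\Vert_2$ at $0$, obtaining $\lambda^2\Vert \mathcal{P}_V\hat W\Vert_2^2=\Vert \mathcal{P}_V\tilde W\Vert_2^2\leq\Vert \tilde W\Vert_2^2\leq 1$ and the same conclusion.

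I expect the main obstacle to be the bookkeeping around the projector in the subdifferential, namely verifying two points: that stationarity can be reduced from $V^T(\cdots)=0$ to the cleaner $\mathcal{P}_V(\cdots)=0$, and that $\mathcal{P}_V$ preserves the orthogonal splitting of the nuclear-norm subgradient $\hat W$ — acting as the identity on the sign part $\sum_j u_j v_j^T$ while leaving the complementary term orthogonal to it. Both rely on the single geometric observation that $S_1(\hat B)\subseteq\mathrm{col}\,V$. Once these are in place, the norm computation and the contraction estimate are routine.
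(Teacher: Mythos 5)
Your proposal is correct and follows essentially the same route as the paper: apply $\mathcal{P}_V$ to the stationarity condition (using that $V^Tg=0$ forces $\mathcal{P}_Vg=0$), observe that $S_1(V\hat A)\subseteq\mathrm{col}\,V$ so that $\mathcal{P}_V$ fixes the sign part of the subgradient while keeping the complementary term orthogonal to it, and then compare Frobenius norms using that $\mathcal{P}_V$ is a contraction. The handling of the degenerate case $V\hat A=U$ also matches the paper's.
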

    
We need the following auxiliary result, which corresponds to Lemma \ref{l2}, and which is proven in the Appendix \ref{proof_lr2}.
     \begin{lemma}\label{lr2}
     Suppose that  $\dfrac{\rho}{\sqrt{\rank(VA_0)}}\geq \lambda \geq 3\Delta'$ for some $\rho<1$, then
     \begin{equation*}
     \left \Vert  V\hat A-U\right \Vert_2\;\geq \left (\dfrac{3-\sqrt{1+\rho^{2}}}{3+\sqrt{1+\rho^{2}}}\right ) \Vert   E\Vert_2.
     \end{equation*}
     \end{lemma}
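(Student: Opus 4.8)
The plan is to mirror the proof of Lemma \ref{l2}, replacing the data-fitting term $\Vert A-\mathbf X\Vert_2$ by $\Vert U-VA\Vert_2$ and the noise matrix by $E=U-VA_0$. First I would dispose of the trivial case $E=0$ (equivalently $VA_0=U$), where the claim reads $\Vert V\hat A-U\Vert_2\geq 0$. Assuming $E\neq 0$, the function $A\mapsto\Vert U-VA\Vert_2$ is differentiable at $A_0$ with gradient $-V^{T}(U-VA_0)/\Vert U-VA_0\Vert_2=-V^{T}E/\Vert E\Vert_2$, so convexity gives the first-order lower bound
\begin{equation*}
\Vert U-V\hat A\Vert_2-\Vert E\Vert_2\geq\left\langle\frac{-V^{T}E}{\Vert E\Vert_2},\hat A-A_0\right\rangle=-\frac{\langle E,V(\hat A-A_0)\rangle}{\Vert E\Vert_2},
\end{equation*}
where I used $\langle V^{T}E,\hat A-A_0\rangle=\langle E,V(\hat A-A_0)\rangle$.

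The key new step, the one that replaces the role of $\Delta$ by $\Delta'$, is to exploit that $V(\hat A-A_0)$ has all its columns in the range of $V$, so that $\mathcal{P}_V\bigl(V(\hat A-A_0)\bigr)=V(\hat A-A_0)$. Since $\mathcal{P}_V$ is self-adjoint, this lets me rewrite $\langle E,V(\hat A-A_0)\rangle=\langle\mathcal{P}_VE,V(\hat A-A_0)\rangle$ and then apply trace duality to obtain $|\langle\mathcal{P}_VE,V(\hat A-A_0)\rangle|\leq\Vert\mathcal{P}_VE\Vert_\infty\,\Vert V(\hat A-A_0)\Vert_1$. This is precisely where $\Delta'=\Vert\mathcal{P}_VE\Vert_\infty/\Vert E\Vert_2$ enters, rather than $\Vert E\Vert_\infty/\Vert E\Vert_2$.

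Next I would bound the nuclear norm by the rank-times-Frobenius estimate $\Vert V(\hat A-A_0)\Vert_1\leq\sqrt{\rank(V\hat A)+\rank(VA_0)}\,\Vert V\hat A-VA_0\Vert_2$. Invoking Lemma \ref{lr1} for $\rank(V\hat A)\leq 1/\lambda^{2}$ together with the hypothesis $\rho/\sqrt{\rank(VA_0)}\geq\lambda$ (so $\rank(VA_0)\leq\rho^{2}/\lambda^{2}$), the square root is at most $\sqrt{1+\rho^{2}}/\lambda$. Combining this with the triangle inequality $\Vert V\hat A-VA_0\Vert_2\leq\Vert V\hat A-U\Vert_2+\Vert E\Vert_2$ and the hypothesis $\lambda\geq 3\Delta'$ (so $\Vert\mathcal{P}_VE\Vert_\infty/(\lambda\Vert E\Vert_2)\leq 1/3$) yields
\begin{equation*}
\Vert U-V\hat A\Vert_2-\Vert E\Vert_2\geq-\frac{\sqrt{1+\rho^{2}}}{3}\left(\Vert V\hat A-U\Vert_2+\Vert E\Vert_2\right),
\end{equation*}
and rearranging gives the stated bound. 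I expect the only genuinely delicate point to be the projection identity $\langle E,V(\hat A-A_0)\rangle=\langle\mathcal{P}_VE,V(\hat A-A_0)\rangle$, which is what makes $\Delta'$ the correct quantity; everything else is an essentially verbatim transcription of the proof of Lemma \ref{l2}.
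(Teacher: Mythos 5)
Your proposal is correct and follows essentially the same route as the paper's own proof: the first-order convexity bound at $A_0$, the projection identity $\langle E,V(\hat A-A_0)\rangle=\langle\mathcal{P}_VE,V(\hat A-A_0)\rangle$ (which the paper uses implicitly in the same place), the rank bound from Lemma \ref{lr1} combined with $\rank(VA_0)\leq\rho^{2}/\lambda^{2}$, and the final rearrangement. No gaps.
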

    Now the proof of Theorem \ref{thmr1} is very similar to the proof of the Theorem \ref{thm1}. We only sketch it. 
     If $V\hat A\neq U$, a necessary condition of extremum in \eqref{estimator_regression} implies that there exists a $\hat W\in \partial \Vert V\hat A \Vert_1$ such that for any $A\in \mathbb R^{m_1\times m_2}$
     \begin{equation}\label{r4}
     \begin{split}
     \dfrac {2\left \langle V\hat A-U,V\left (\hat A-A\right )\right \rangle}{2\left \Vert V\hat A-U\right \Vert_2}+\lambda\left \langle\hat W,V\left (\hat A-A\right )\right \rangle\leq 0.
     \end{split}
     \end{equation}
%
      and we get
     \begin{equation}\label{r5}
     \begin{split}
     \left \Vert V\left (\hat A-A_0 \right )\right  \Vert^{2}_2+\left \Vert V\left (\hat A-A\right )\right   \Vert^{2}_2+2\lambda\Big \Vert V\hat A-U\Big \Vert_2\left \langle \mathcal P_{S_1^{\bot}(VA)}W\mathcal  P_{S_2^{\bot}(VA)},V\left (\hat A-A\right )\right \rangle&\\&\hskip -12 cm
     \leq\big \Vert V\left ( A-A_0 \right )\big  \Vert^{2}_2
     +2\left \langle E,V\left (\hat A-A\right )\right \rangle\\&\hskip -9 cm -2\lambda\left \Vert V\hat A-U\right \Vert_2\left \langle \underset{j=1}{\overset{\rank(VA)}{\sum}}u_j(VA)v_j(VA)^{T},V\left (\hat A-A\right )\right\rangle.
     \end{split}
     \end{equation}

      Let $\mathbf {Pr}_{VA}(B)=B- \mathcal P_{S_1^{\bot}(VA)}B \mathcal P_{S_2^{\bot}(VA)}$. Then,
      the trace duality and the triangle inequality imply
     \begin{equation}\label{r7}
     \begin{split}
     \left \langle E,V\left (\hat A-A\right )\right \rangle&=\left \langle \mathcal{P}_VE,V\left (\hat A-A\right )\right \rangle \\&\leq  \left \Vert  \mathcal{P}_VE\right  \Vert_\infty \left \Vert V\left ( \hat A-A\right )\right   \Vert_1 \\&\leq
     \left \Vert  \mathcal{P}_VE\right  \Vert_\infty \left \Vert\mathbf {Pr}_{VA}\left [ V\left (\hat A-A \right )\right ]\right \Vert_1 \\&\hskip 0.5 cm+\left \Vert  \mathcal{P}_VE\right  \Vert_\infty\left  \Vert\mathcal  P_{S_1^{\bot}(VA)}V\left (\hat A-A \right )\mathcal P_{S_2^{\bot}(VA)}\right \Vert_1.
     \end{split}
     \end{equation}
    Using $6\dfrac{3-\sqrt{1+\rho^{2}}}{3+\sqrt{1+\rho^{2}}}\geq 2$ for any $\rho<1$  \eqref{r5} implies
     \begin{equation}\label{r11}
     \begin{split}
     \left \Vert V\left (\hat A-A_0 \right )\right  \Vert^{2}_2+\left \Vert V\left (\hat A-A\right )\right   \Vert^{2}_2&
                  \leq \big \Vert V\left ( A-A_0 \right )\big  \Vert^{2}_2\\&\hskip 0.5 cm+2\left \Vert  \mathcal{P}_VE\right  \Vert_\infty \left \Vert\mathbf {Pr}_{VA} \left [V\left (\hat A-A \right )\right ]\right \Vert_1 \\&\hskip 1 cm
                    +2\lambda\left \Vert V\hat A-U\right \Vert_2\left \Vert\mathbf {Pr}_{VA} \left [V\left (\hat A-A \right )\right ]\right \Vert_1.
                         \end{split}
     \end{equation}
     Now we use  $\left \Vert\mathbf {Pr}_{VA} \left [V\left (\hat A-A \right )\right ]\right \Vert_1\leq \sqrt{2\rank(VA)}\left \Vert V\left (\hat A-A \right )\right \Vert_2$, $\left \Vert  \mathcal{P}_VE\right  \Vert_\infty\leq \lambda\Vert  E \Vert_2/3$ and $\lambda\sqrt{2\,\rank(VA)}\leq \rho$ to conclude
     \begin{equation*}
     \begin{split}
     (1-\rho)\left \Vert V\left (\hat A-A_0\right )\right   \Vert^{2}_2
     \leq \big\Vert V\left( A-A_0\right )\big\Vert^{2}_2+\dfrac{4\lambda^{2}}{1-\rho}\Vert  E \Vert_2^{2}\,\rank(VA)
     \end{split} 
     \end{equation*}
     which implies the statement of Theorem \ref{thmr1}.
     
\section{Proof of Lemma \ref{lr1}}\label{proof_lr1}
That $\hat A$ is the minimum of \eqref{estimator_regression} implies that $0\in \partial G(\hat A)$ where $$G=\Vert U-V\,A\Vert_2+\lambda \Vert VA\Vert_1.$$ Note that the subdifferential of the convex function $A\rightarrow  \Vert VA \Vert_1$ is the following set of matrices 
     \begin{equation*}
     \partial \Vert VA \Vert_1=V^{T}\left\{ \underset{j=1}{\overset{\rank(VA)}{\sum}}u_j(VA)v_j^{T}(VA)+ \mathcal P_{S_1^{\bot}(VA)}W\mathcal P_{S_2^{\bot}(VA)}\;:\; \Vert  W\Vert_\infty\leq 1\right\}
     \end{equation*} 
      where $S_1(VA)$ is the linear span of $\{u_j(VA)\}$ and $S_2(VA)$ is the linear span of $\{v_j(VA)\}$.
 
 If $\hat A$ is such that $V\hat A\neq U$, we obtain that, there exists a matrix $W$ such that $\Vert  W\Vert_\infty\leq 1$ and 
      \begin{equation*} 
      V^{T}\dfrac{V\hat A-U}{ \Vert V \hat A-U\Vert_2}=-\lambda V^{T}\left\{ \underset{j=1}{\overset{\rank(VA)}{\sum}}u_j(VA)v_j^{T}(VA)+ \mathcal P_{S_1^{\bot}(VA)}W\mathcal P_{S_2^{\bot}(VA)}\right \} 
      \end{equation*}
      which implies
      \begin{equation}\label{r1}
            V^{T}\mathcal{P}_V\dfrac{V\hat A-U}{ \Vert V \hat A-U\Vert_2}=-\lambda V^{T}\mathcal{P}_V\left\{ \underset{j=1}{\overset{\rank(VA)}{\sum}}u_j(VA)v_j^{T}(VA)+\mathcal P_{S_1^{\bot}(VA)}W\mathcal P_{S_2^{\bot}(VA)}\right \}. 
            \end{equation}
Using $\mathcal{P}_V VA\left (v_j(VA)\right )=VA\left (v_j(VA)\right )=\sigma_j(VA)u_j(VA)$ and $\sigma_j\neq 0$ we get
                  \begin{equation}\label{41}
                                    \mathcal{P}_Vu_j(VA)=u_j(VA).
                                    \end{equation}
                                     Note that  for any $w$ such that $\left \langle w,u_j(VA)\right \rangle=0$ \eqref{41} implies that 
                                    \begin{equation}\label{42}
                                    \left \langle \mathcal{P}_Vw,u_j(VA)\right \rangle=\left \langle w,u_j(VA)\right \rangle=0.
                                    \end{equation}                                    
 By the definition, $\mathcal P_{S_1^{\bot}(VA)}$ projects on the orthogonal complement of the linear span of $\{u_j(VA)\}$. Thus, \eqref{42}  implies that  $\mathcal{P}_V\mathcal P_{S_1^{\bot}(VA)}$ also projects on the subspace orthogonal to the linear span of $\{u_j(VA)\}$.
                                                                         
      Note that $V^{T}\mathcal{P}_VB=0$ imply 
      $\mathcal{P}_VB=0$ and we get from \eqref{r1}
                  
     \begin{equation}\label{r3}
                       \mathcal{P}_V\dfrac{V\hat A-U}{ \Vert V \hat A-U\Vert_2}=-\lambda \left\{ \underset{j=1}{\overset{\rank(VA)}{\sum}}u_j(VA)v_j^{T}(VA)+ \mathcal{P}_V\left [\mathcal P_{S_1^{\bot}(VA)}W\mathcal P_{S_2^{\bot}(VA)}\right ]\right \}.
                       \end{equation}
                                   
                       
     Calculating the $ \Vert \; \Vert_2^{2}$ norm of both sides of \eqref{r3} we get that $1\geq \lambda^{2}\rank(V\hat A)$.
     When $V\hat A= U$, instead of the differential of $\Vert U-V\,A\Vert_2$ we use its subdiffential.


\section{Proof of Lemma \ref{lr2}}\label{proof_lr2}
If $V A_0= U$, then we have trivially $ \Vert  V\hat A-U\Vert_2\;\geq 0.$
If $ VA_0\neq U$, by the convexity of function $A\rightarrow  \Vert  V A-U\Vert_2$, we have
\begin{equation}\label{r33}
\begin{split}
\left \Vert  V\hat A-U\right \Vert_2-\left \Vert V A_0-U\right \Vert_2&\geq \dfrac{\left \langle VA_0-U,V\left (\hat A-A_0\right )\right \rangle}{\left \Vert V A_0-U\right \Vert_2}\\&=\dfrac{\left \langle\mathcal{P}_V\left (E\right ) ,V\left (\hat A-A_0\right )\right \rangle}{\left \Vert V A_0-U\right \Vert_2}\\&\geq -\dfrac{\left \Vert  \mathcal{P}_V\left (E\right )\right \Vert_\infty}{\left \Vert  E\right \Vert_2}\left \Vert V\left ( \hat A-A_0\right )\right \Vert_1\\
&\geq-\dfrac{\left \Vert  \mathcal{P}_V\left (E\right )\right \Vert_\infty}{\left \Vert  E\right \Vert_2}\sqrt{\rank(VA_0)+\rank(V\hat A)}\left \Vert  V\left (\hat A-A_0\right )\right \Vert_2.
\end{split}
\end{equation} 
Using the bound $\dfrac{\rho}{\sqrt{\rank(VA)}}\geq \lambda$, Lemma \ref{lr1} and the triangle inequality from \eqref{r33} we get 
\begin{equation*}
\begin{split}
\left \Vert  V\hat A-U\right \Vert_2-\left \Vert  VA_0-U\right \Vert_2
&\geq\\&\hskip -0.5 cm-\dfrac{\sqrt{1+\rho^{2}}}{\lambda}\dfrac{\left \Vert  \mathcal{P}_V\left (E\right )\right \Vert_\infty}{\left \Vert  E\right \Vert_2}\left (\Vert  V\hat A-U\Vert_2+\Vert V A_0-U\Vert_2\right ).
\end{split}
\end{equation*} 
By the definition of $\lambda$ we have $\dfrac{\left \Vert  \mathcal{P}_V\left (E\right )\right \Vert_\infty}{\lambda\left \Vert  E\right \Vert_2}\leq 1/3$ which finally leads to
\begin{equation*}
\left (1+\sqrt{1+\rho^{2}}/3\right ) \Vert  V\hat A-U\Vert_2\;\geq \left (1-\sqrt{1+\rho^{2}}/3\right ) \Vert   VA_0-U\Vert_2.
\end{equation*}
This completes the proof of Lemma \ref{lr2}.
\vskip 0.5 cm
\textbf{Acknowledgements}.
It is a pleasure to thank A. Tsybakov for introducing me this problem and illuminating discussions.

\bibliographystyle{plain}

\bibliography{biblio}

\end{document}